\documentclass[a4paper,11pt,oneside]{article}
\usepackage[utf8]{inputenc}
\usepackage[T1]{fontenc}
\usepackage[english]{babel}
\usepackage{amsthm}
\usepackage{array}
\usepackage{tikz}
\usepackage{appendix}
\usepackage{amsmath}
\usepackage{amssymb}
\usepackage{hyperref}
\usepackage{mathrsfs} 
\usepackage{bbm}
\newcommand{\xdownarrow}[1]{%
  {\left\downarrow\vbox to #1{}\right.\kern-\nulldelimiterspace}
}
\hypersetup{
    colorlinks=true, 
    linktoc=all,
    urlcolor=darkgray,   
    linkcolor=blue,  
    citecolor=darkgray,
}
\usepackage{babel}
\usepackage{csquotes}
\definecolor{greenpigment}{rgb}{0.0, 0.65, 0.31}
\definecolor{iceberg}{rgb}{0.44, 0.65, 0.82}

\usepackage{graphicx, sidecap}
\usepackage{algorithm}
\usepackage{algpseudocode}
\usepackage{wasysym}
\usepackage{pifont,stmaryrd}

\usepackage{mathtools}
\usepackage{amsfonts}
\usepackage{multicol}
\usepackage{setspace}
\usepackage{listings}
\usepackage{tikz}
\usetikzlibrary{arrows}
\usetikzlibrary{positioning}
\usepackage{caption}
\usetikzlibrary{shapes,arrows}
\usepackage{multirow}
\usepackage{setspace}
\usepackage[top=1.5cm, bottom=3cm, left=2cm , right=1cm]{geometry}
\usepackage{fancybox}
\usepackage{xcolor, color}
\usepackage{pifont}
\reversemarginpar
\usepackage{url}

\usepackage[all]{xy}
\usepackage{tikz-cd}

\usepackage{amssymb}

\newcommand{\R}{\mathbb{R}}
\newcommand{\C}{\mathbb{C}}

\newcommand{\Z}{\mathbb{Z}}

\newcommand{\cc}{\mathbf{c}}

\newcommand{\End}{\text{End}}
\newcommand{\Hom}{\text{Hom}}
\newcommand{\Hol}{\text{Hol}}
\newcommand{\Pol}{\text{Pol}}

\newcommand{\Ker}{\text{Ker}}
\newcommand{\Impart}{\text{Im}}

\newcommand{\SL}{\mathbf{SL}}

\usepackage{cancel}
\theoremstyle{plain}

\numberwithin{equation}{section}

\theoremstyle{plain}
\newtheorem{cor}{Corollary}

\newtheorem{prop}{Proposition}
\newtheorem{thm}{Theorem}

\newtheorem{lem}{Lemma}
\newtheorem{defn}{Definition}
\theoremstyle{definition}
\newtheorem{exmp}{Example}

\theoremstyle{remark}
\newtheorem{req}{Remark}

\title{\textbf{Delorme's intertwining conditions for sections of homogeneous vector bundles on two and three dimensional hyperbolic spaces}}
\author{Martin \textsc{Olbrich} and Guendalina \textsc{Palmirotta}}
\date{} 

\begin{document}

\maketitle

\abstract
The description of the Paley-Wiener space for compactly supported smooth functions $C^\infty_c(G)$ on a semi-simple Lie group  $G$ involves certain intertwining conditions that are difficult to handle.
In the present paper, we make them completely explicit for $G=\mathbf{SL}(2,\mathbb{R})^d$ ($d\in \mathbb{N}$) and $G=\mathbf{SL}(2,\mathbb{C})$.
Our results are  based on a defining criterion for the Paley-Wiener space, valid for general groups of real rank one, that we derive from Delorme's proof of the Paley-Wiener theorem.
In a forthcoming paper, we will show how these results can be used to study solvability of invariant differential operators between sections of homogeneous vector bundles over  the corresponding symmetric spaces.
\tableofcontents

\section{Introduction}
Consider a Riemannian symmetric space of non-compact type $X=G/K$, where $G$ is a real connected semi-simple Lie group with finite center of non-compact type and $K \subset G$ is its maximal compact subgroup.\\
Arthur \cite{Arthur} as well as Delorme \cite[Thm. 2]{Delorme} established a Paley-Wiener theorem for ($K$-finite) compactly supported smooth functions on $G$. Their results involved the so-called Arthur-Campolli and Delorme conditions, respectively.
Later van den Ban and Souaifi \cite{vandenBan} proved, without using the proof nor validity of any associated Paley-Wiener theorem, that the two compatibility conditions are equivalent.\\
In \cite{PalmirottaPWS}, we proved a topological Paley-Wiener(-Schwartz) theorem for sections of homogeneous vector bundles by adapting Delorme's intertwining conditions for our purposes. We considered the intertwining conditions in three levels, namely (\textcolor{blue}{Level 1}) refered to Delorme's condition in the setting of van den Ban and Souaifi \cite{vandenBan}, (\textcolor{blue}{Level 2}) corresponded to the conditions for sections of homogeneous vector bundles and (\textcolor{blue}{Level 3}) stood for spherical functions.\\
However, these intertwining conditions are very difficult to check in practice, even for special $K$-types.\\
The most important source of such conditions are the Knapp-Stein \cite{KnappSteinI},\cite{KnappSteinII} and 
Zelobenko \cite{Zelo} intertwining operators, as well as the embedding of discrete series into principal series.

Therefore, in this article, we rewrite them in a more accessible way involving such intertwining operators and the Harish-Chandra $\mathbf{c}$-functions, which we introduce in the first Section~\ref{sect:Intwoperators}.
Moreover, in Section~\ref{sect:Meta}, we show that only a part of them is already sufficient for $G$ of real rank one (Thm~\ref{thm:Meta}).
This is already implicitly contained in Delorme's proof of the Paley-Wiener theorem \cite[Thm.~2]{Delorme}.
For our proof, we essentially use an intermediate result of Delorme \cite[Prop.~1]{Delorme} and his induction procedure \cite[ Prop.~2]{Delorme} on the length of minimal $K$-types of a generalized principal series representation.\\
To apply Thm.~\ref{thm:Meta}, one has to know more or less the complete composition series of reducible principle series representations, which is the case for the two special examples, $\mathbf{SL}(2,\mathbb{R})$ and $\mathbf{SL}(2,\mathbb{C})$, which are in the focus of the present paper.\\
In fact, for $G=\mathbf{SL}(2,\mathbb{R})$, in Section ~\ref{sect:SL2R}, by drawing its principal series representations $H^{\pm,\lambda}_\infty$ by '\textit{box-pictures}' (Fig.~\ref{fig:BoxSL2R}), we can see in which closed $G$-submodule of $H^{\pm,\lambda}_\infty$ there is an intertwining condition in (\textcolor{blue}{Level 2}) (Thm.~\ref{thm:Meta2SL2R}).
Afterwards, we can deduce the corresponding results also for the other levels (Thms.~\ref{thm:Meta3SL2R} \;\&~\ref{thm:Meta1}) and even go beyond by illustrating the intertwining conditions for finite products of $\mathbf{SL}(2,\mathbb{R})$ (Thm.~\ref{thm:Meta3SL4R}).
As a last example, in Section~\ref{sect:SL2C}, we consider $G=\mathbf{SL}(2,\mathbb{C})$.
The description of its intertwining conditions (Thms.~\ref{thm:Meta1SL2C},~\ref{thm:Meta2SL2C} \&~\ref{thm:Meta3SL2C}) is more difficult then for the previous examples. 
We remark that $\mathbf{SL}(2,\mathbb{R})$ and $\mathbf{SL}(2,\mathbb{C})$ are locally isomorphic to $\mathbf{SO}(1,n), n=2,3$ respectively.
In fact, based on Thm.~\ref{thm:Meta}, we have already checked that it is possible to obtain analogous results for all $n$.
We are grateful to P. Delorme for mentioning to us that the results for odd $n$ (including $\mathbf{SL}(2,\mathbb{C})$) can also be derived from his earlier Paley-Wiener theorem for groups with only one conjugacy class of Cartan subgroups \cite{DelormeSO2}.\\
This work is part of the second author's doctoral dissertation \cite{PalmirottaDiss}. In fact, our results can be used to study solvability of invariant differential operators between sections of homogeneous vector bundles on the corresponding symmetric spaces. 
The way we expressed the intertwining conditions in (\textcolor{blue}{Level 3}) is particularly adapted to the solvability questions.
In an upcoming paper \cite{PalmirottaSolv}, these questions will be discussed in detail.

\section{Intertwining conditions and operators}
\label{sect:Intwoperators}
Let $G$ be a real connected semi-simple Lie group with finite center of non-compact type with Lie algebra $\mathfrak{g}$. We fix an Iwasawa decomposition $G=KAN$, where
$K \subset G$ is a maximal compact subgroup with Lie algebra $\mathfrak{k}$, $A=\exp(\mathfrak{a})$ is abelian and $N$ is nilpotent.
The quotient $X=G/K$ is a Riemannian symmetric space of non-compact type.\\
Let $M=Z_K(\mathfrak{a})$ be the centralizer of $A$ in $K$.
Then $P=MAN$ is a minimal parabolic subgroup of $G$.
Let $(\sigma,E_\sigma) \in \widehat{M}$ be a finite-dimensional irreducible representation of $M$ and $\lambda \in \mathfrak{a}^*_\mathbb{C}$ the complexified dual of the Lie algebra of $A$.
For $(\sigma,\lambda) \in \widehat{M} \times \mathfrak{a}^*_\mathbb{C}$, consider 
$H^{\sigma,\lambda}_{\infty}$ the space of smooth vectors of the principal series representation of $G$ induced from the $P$-representation $\sigma_\lambda$ on the vector space $E_{\sigma}$ \cite[p. 168]{Knapp2}.
By $H^{\sigma}_\infty$, we denote its representation space in the compact picture, which is given by functions on $K$ and is independent of $\lambda$.
The corresponding operators $\pi_{\sigma,\lambda}(g) \in \text{End}(H^\sigma_\infty), g \in G$, depend holomorphically on $\lambda \in \mathfrak{a}^*_\mathbb{C}$.
In \cite{PalmirottaPWS}, we introduced the  $m$-th derived principal series representation of $G$. For similar slightly different definitions see also \cite[Def.~3 (4.4)]{Delorme} or \cite[Sect.~4.5]{vandenBan}.

\begin{defn}[$m$-th derived representation, \cite{PalmirottaPWS}, Def.~2] \label{def:succderv}
For $\lambda \in \mathfrak{a}^*_\mathbb{C}$, let $\emph \Hol_\lambda$ be the set of germs at $\lambda$ of $\mathbb{C}$-valued holomorphic functions $\mu \mapsto f_\mu$
and $m_\lambda \subset \emph \Hol_\lambda$ the maximal ideal of germs vanishing at $\lambda$.\\
Denote by $H^{\sigma}_{[\lambda]}$ the set of germs at $\lambda$ of $H^{\sigma}_\infty$-valued holomorphic functions 
$\mu \mapsto \phi_{\mu} \in H^{\sigma}_\infty$ 
with $G$-action
$$ 
(g\phi)_{\mu} = \pi_{\sigma,\mu}(g) \phi_{\mu},\;\;\; g \in G.
$$
For $m\in \mathbb{N}_0$, it induces a representation $\pi^{(m)}_{\sigma,\lambda}$ on the space
\begin{equation} \label{eq:succderv}
H^{\sigma,\lambda}_{\infty,(m)} := H^{\sigma}_{[\lambda]} / m_\lambda^{m+1} H^{\sigma}_{[\lambda]},
\end{equation}
which is equipped with the natural Fr\'echet topology.
We call this representation the $m$-th derived principal series representation of $G$.
\end{defn}

Let us first recall the intertwining conditions and corresponding Paley-Wiener theorems in the three levels from \cite{PalmirottaPWS}.\\

\noindent
\textbf{Intertwining conditions and Paley-Wiener theorem in (\textcolor{blue}{Level 1})}.
We denote by $\text{Hol}(\mathfrak{a}^*_\mathbb{C})$ the space of holomorphic functions on $\mathfrak{a}^*_\mathbb{C}$ and by $\text{Hol}(\mathfrak{a}^*_\mathbb{C}, \text{End}(H^\sigma_\infty))$ the space of maps
$\mathfrak{a}^*_\mathbb{C} \ni \lambda \mapsto \phi(\lambda) \in \text{End}(H^{\sigma}_\infty)$
such that for $\varphi\in H^{\sigma}_\infty,$ the function
$\lambda \mapsto \phi(\lambda)\varphi\in H^{\sigma}_\infty$
 is holomorphic.

\begin{defn}[Delorme's intertwining condition in (\textcolor{blue}{Level 1}), \cite{PalmirottaPWS}, Def.~3]
\label{defn:intwcond}
Let $\Xi$
be the set of all 3-tuples $(\sigma,\lambda,m)$ with $\sigma \in \widehat{M}$, $\lambda \in  \mathfrak{a}^*_\mathbb{C}$ and $m\in\mathbb{N}_0$.
Consider the $m$-th derived $G$-representation $H^{\sigma,\lambda}_{\infty,(m)}$ defined in (\ref{eq:succderv}). 
For every finite sequence $\xi=(\xi_1,\xi_2,\dots,\xi_s) \in \Xi^s, s\in \mathbb{N}$, we define the $G$-representation
$$H_\xi:= \bigoplus_{i=1}^s H^{\sigma_i,\lambda_i}_{\infty,(m_i)}.$$
We consider proper closed $G$-subrepresentations $W \subseteq H_\xi$.\\
Such a pair $(\xi,W)$ with $\xi \in \Xi^s$ and $W \subset H_\xi$ as above, is called an intertwining datum.
Every  function $\phi \in \prod_{\sigma\in \widehat{M}} \emph \Hol(\mathfrak{a}^*_\mathbb{C}, \emph \End(H^\sigma_\infty))$ induces an element
$$\phi_\xi \in \bigoplus_{i=1}^s \emph \End(H^{\sigma_i,\lambda_i}_{\infty,(m_i)}) \subset \emph \End(H_\xi).$$
\begin{itemize}
\item[(D.a)] We say that $\phi$ satisfies Delorme's intertwining condition, if $\phi_\xi(W) \subseteq W$ for every intertwining datum $(\xi,W)$.
\end{itemize}
\end{defn}

\noindent
Moreover, for $r> 0$, one can introduce a corresponding Paley-Wiener space
$$
PW_r(G) \subset \prod_{\sigma\in \widehat{M}} \text{Hol}(\mathfrak{a}^*_\mathbb{C}, \text{End}(H^\sigma_\infty)),
$$
which is characterised by the usual growth condition (e.g. \cite[Def.~3 (4.3)]{Delorme} or \cite[Def.~4 $(1.ii)_r$]{PalmirottaPWS}) and the intertwining condition $(D.a)$.
Let $\overline{B}_r(o)$ be the preimage of the closed ball of radius $r>0$ centered at $o\in X$ under the projection $p:G \rightarrow X$.
Write by $C^\infty_r(G)$ the
space of smooth complex functions on $G$ with support in $\overline{B}_r(o)$.
Then, by taking the union over all $r>0$, and considering the Fourier transform of $f\in~C^\infty_c(G)$:
\begin{eqnarray*}
\widehat{M} \times \mathfrak{a}^*_\mathbb{C} \in (\sigma,\lambda) \mapsto \mathcal{F}_{\sigma,\lambda}(f)&:=&\pi_{\sigma,\lambda}(f) \\
&=&\int_G f(g) \pi_{\sigma,\lambda}(g) \;dg \in \prod_{\sigma\in \widehat{M}} \text{Hol}(\mathfrak{a}^*_\mathbb{C}, \text{End}(H^\sigma_\infty))
\end{eqnarray*}

\begin{thm}[Delorme's Paley-Wiener Theorem in (\textcolor{blue}{Level 1}), \cite{Delorme}, Thm.~2] \label{thm:Delorme1}
For any $r>0$, the Fourier transform
$$ C_r^\infty(G) \ni f \mapsto \mathcal{F}_{\sigma,\lambda}(f)\in PW_r(G), \;\;\; (\sigma,\lambda) \in \widehat{M} \times \mathfrak{a}^*_\mathbb{C}$$
is a topological isomorphism between the Fr\'echet spaces $C_r^\infty(G)$ and $PW_r(G)$. \qed
\end{thm}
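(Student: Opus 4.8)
The plan is to follow the classical architecture of a Paley--Wiener proof (Helgason and Gangolli in the spherical case, Arthur and Delorme in general), decomposing the assertion into four steps: (i) the Fourier transform maps $C_r^\infty(G)$ into $PW_r(G)$; (ii) it is injective; (iii) it is surjective; (iv) it is a homeomorphism. Since both spaces are Fréchet, the open mapping theorem reduces (iv) to establishing a continuous bijection, the continuity of the inverse being extracted from the estimates produced along the way.

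For step (i) I would first check the growth condition. Writing $\mathcal{F}_{\sigma,\lambda}(f)=\int_G f(g)\,\pi_{\sigma,\lambda}(g)\,dg$ and inserting the $KAK$ decomposition, the support of $f$ in $\overline{B}_r(o)$ together with the elementary bound $\|\pi_{\sigma,\lambda}(\exp H)\|\le e^{|\Repart\lambda|\,|H|}$ on the split part forces $\mathcal{F}(f)$ to be an operator-valued holomorphic function of exponential type $r$ satisfying the required seminorm estimates. The conceptual half of (i) is the intertwining condition (D.a), and here the argument is essentially formal: for an intertwining datum $(\xi,W)$ and $\phi=\mathcal{F}(f)$, the induced endomorphism $\phi_\xi$ acts on each summand $H^{\sigma_i,\lambda_i}_{\infty,(m_i)}$ as the integrated action $\int_G f(g)\,\pi^{(m_i)}_{\sigma_i,\lambda_i}(g)\,dg$ of $f$ in the $m_i$-th derived representation of Definition~\ref{def:succderv}. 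Because this is a genuine $G$-representation and $W$ is a closed $G$-submodule, the integrated action preserves $W$, so $\phi_\xi(W)\subseteq W$. Thus every element of the image automatically satisfies (D.a); this is precisely the \emph{necessity} of Delorme's conditions.

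Injectivity (ii) I would deduce from Fourier inversion: the tempered principal series (the unitary axis $\lambda\in i\aL^*$) together with the discrete series already separate points of $C_c^\infty(G)$ through the Plancherel formula, so $\mathcal{F}_{\sigma,\lambda}(f)\equiv 0$ forces $f=0$. The heart of the theorem, and the step I expect to be the main obstacle, is surjectivity (iii). Given $\phi\in PW_r(G)$ I would construct a candidate preimage as a wave packet over the principal series,
\begin{equation*}
  f(g)=\sum_{\sigma\in\widehat{M}}\int_{i\aL^*}\Tr\!\big(\pi_{\sigma,\lambda}(g^{-1})\,\phi(\sigma,\lambda)\big)\,|\cc(\sigma,\lambda)|^{-2}\,d\lambda,
\end{equation*}
where the Plancherel density is built from the Harish-Chandra $\cc$-functions. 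Smoothness of $f$, and convergence of all its derivatives, follow from the rapid decay encoded in the growth condition.

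The delicate point is the support estimate $\supp f\subseteq\overline{B}_r(o)$, which I would prove by shifting the contour of integration in $\lambda$ towards $\Repart\lambda\to+\infty$ in a fixed Weyl chamber and invoking Cauchy's theorem: for $g\notin\overline{B}_r(o)$ the exponential type $r$ of $\phi$ overwhelms the growth of the matrix coefficient $\pi_{\sigma,\lambda}(g^{-1})$, so the shifted integral tends to zero. The obstruction is that shifting crosses the poles of the meromorphically continued integrand, which sit exactly at the reducibility points of the principal series; each contour shift therefore produces residue terms living in the proper submodules $W$ of the associated derived representations (these residues are the discrete-series and finite-dimensional contributions). It is here that Delorme's condition (D.a) is indispensable: it is designed precisely to force these residues to recombine and cancel, so that no support is created outside the ball. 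Once $\supp f\subseteq\overline{B}_r(o)$ is secured, re-expanding $\mathcal{F}_{\sigma,\lambda}(f)$ and applying inversion once more yields $\mathcal{F}(f)=\phi$; tracking the constants in both directions then upgrades the continuous bijection of step (iv) to the asserted topological isomorphism.
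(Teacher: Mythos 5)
The first thing to observe is that the paper does not prove this statement: Theorem~\ref{thm:Delorme1} is quoted from Delorme (\cite{Delorme}, Thm.~2), and the \qed placed directly after the statement marks it as an imported result, so there is no internal proof to compare against. Measured against Delorme's actual argument, your sketch takes a genuinely different route. Your step (i) (necessity of the growth estimate and of condition (D.a), the latter being formal because $W$ is a closed $G$-submodule of a genuine $G$-representation) is correct. But your surjectivity strategy --- a wave packet over the unitary minimal principal series followed by a contour shift whose residues are cancelled by (D.a) --- is the Arthur/Campoli/Helgason--Gangolli scheme, not Delorme's. Delorme's proof deliberately avoids the residue calculus: he establishes surjectivity by an induction on the length $\|\mu_\tau+2\rho_c\|$ of the minimal $K$-types of the representations involved, using his Proposition~1 to produce $K\times K$-finite functions in $C^\infty_r(G)$ whose Fourier transforms match $\phi$ at the representations of maximal minimal-$K$-type length and vanish at the others, then subtracting and descending. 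That induction is exactly the machinery the present paper reprises in the proof of Theorem~\ref{thm:Meta}, so your outline would also not connect to the way the theorem is actually used later in the paper.

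Beyond the difference of route, there are two concrete gaps. First, your candidate preimage integrates only over the minimal principal series against the density $|\cc(\sigma,\lambda)|^{-2}$, whereas Harish-Chandra's inversion formula for a general semisimple $G$ also contains the discrete series and the series induced from the other cuspidal parabolic subgroups; without those terms your $f$ does not satisfy $\mathcal{F}(f)=\phi$, and the missing wave packets are not part of the datum $\phi$ --- they must be manufactured from $\phi$ via embeddings into minimal principal series, a substantial step your sketch suppresses entirely. Second, the claim that after shifting the contour ``condition (D.a) forces the residues to recombine and cancel'' is precisely the multidimensional residue scheme that occupies the bulk of Arthur's proof; it can be carried out in real rank one, but in higher rank this one sentence conceals the entire difficulty of the theorem, and nobody has executed it directly from Delorme's form of the intertwining conditions. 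As written, the proposal is a plausible outline of a different (and, in higher rank, essentially unfinished) proof rather than a proof of the quoted theorem.
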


\noindent
\textbf{Intertwining conditions and Paley-Wiener theorems in (\textcolor{blue}{Level 2}) and (\textcolor{blue}{Level 3})}.
Next, consider two finite-dimensional, not necessary irreducible, $K$-representations $(\tau,E_\tau)$ and $(\gamma,E_\gamma)$.
We consider the space of smooth compactly supported sections of homogeneous vector bundles $\mathbb{E}_\tau$ over $X$ by
\begin{eqnarray*}
C_c^\infty(X,\mathbb{E}_\tau) 
&=& \bigcup_{r>0} C_r^\infty(X,\mathbb{E}_\tau) \\
&\cong& \bigcup_{r>0}
 \Big\{ f:G \stackrel{C^\infty}{\longrightarrow} E_\tau \;\vert\; f(gk)= \tau^{-1}(k)(f(g)),\\
&&\;\;\;\;\;\;\;\;\;\;\;\;\;\;\;\;\;\;\;\;\;\;\;\; \forall g \in, k \in K \text{ and } supp(f) \subset \overline{B}_r(0) \Big\}.
\end{eqnarray*}
The group $G$ acts on $C_c^\infty(X,\mathbb{E}_\tau)$ by left translations
$$(g \cdot f)(g')=f(g^{-1}g'), \;\;\;\;\;\;\; \forall g,g' \in G.$$
It is not difficult to see that we have the $G$-isomorphisms
$C^{\infty}(X, \mathbb{E}_\tau) \cong  C^\infty(G,E_\tau)^K \cong [C^{ \infty}(G) \otimes E_\tau]^K.$
Moreover, we also consider the space of $(\gamma,\tau)$-spherical functions on $G$
\begin{eqnarray*}
 C_c^\infty(G,\gamma,\tau) &=& \bigcup_{r>0} C_r^\infty(G,\gamma,\tau) \\
&:=& \bigcup_{r>0}  \Big\{ f : G \rightarrow \text{Hom}(E_\gamma,E_{\tau}) \;\vert\; f(k_1gk_2)=\tau(k_2)^{-1}f(g)\gamma(k_1)^{-1}, \\
&&\;\;\;\;\;\;\;\;\;\;\;\;\;\;\;\;\;\;\;\;\;\;\;\;\forall k_1,k_2 \in K \text{ and } supp(f) \subset \overline{B}_r(0) \Big\}.
\end{eqnarray*}
Note that by taking topological linear duals (of sections of dual bundles), we obtain the spaces of distributional sections $C^{-\infty}(X,\mathbb{E}_\tau)$ and $C^{-\infty}(G,\gamma,\tau)$, respectively.
We are particularly interested in distributional sections with compact support, i.e. in the spaces $C_c^{-\infty}(X,\mathbb{E}_\tau)$ and $C_c^{-\infty}(G,\gamma,\tau)$, respectively.

The Fourier transform for (distributional) sections of homogeneous vector bundles in (\textcolor{blue}{Level 2}) and (\textcolor{blue}{Level 3}) is given in the following definition.

\begin{defn}[Fourier transforms, \cite{PalmirottaPWS}, Def.~5 \& 6] \label{defn:FTsect}
Let $g=\kappa(g)a(g)n(g) \in KAN=G$ be the Iwasawa decomposition.
For fixed $\lambda \in \mathfrak{a}^*_\mathbb{C}$ and $k\in K$, we define the function $e^\tau_{\lambda,k}$ by
\begin{eqnarray} \label{eq:exptau}
e^\tau_{\lambda,k}: G &\rightarrow& \emph \End(E_\tau) \cong E_{\tilde{\tau}} \otimes E_\tau  \nonumber \\
g &\mapsto& e^\tau_{\lambda,k}(g):=\tau(\kappa(g^{-1}k))^{-1} a(g^{-1}k)^{-(\lambda+\rho)},
\end{eqnarray}
where $\rho$ is the half sum of the positive roots of $(\mathfrak{g},\mathfrak{a})$.
\begin{itemize}
\item[(a)] (\textcolor{blue}{Level 2}) For $f\in C^\infty_c(X,\mathbb{E}_\tau)$, the Fourier transformation is given by
\begin{equation} \label{eq:FTtau}
\mathcal{F}_\tau f(\lambda,k)=\int_G e^\tau_{\lambda,k}(g) f(g) \;dg = \int_{G/K} e^\tau_{\lambda,k}(g) f(g) \;dg,
\end{equation}
where the last equality makes sense, since the integrand is right $K$-invariant.
Similar, for distributional sections $T\in C^{-\infty}_c(X,\mathbb{E}_\tau)$:
$$\mathcal{F}_\tau T(\lambda,k):= \langle T, e^{\tau}_{\lambda, k} \rangle = T(e^\tau_{\lambda,k}) \in E_\tau,$$ with $(\lambda, k) \in \mathfrak{a}^*_\mathbb{C} \times K/M.$
\item[(b)] (\textcolor{blue}{Level 3}) The Fourier transformation for $f \in C^\infty_c(G,\gamma, \tau)$ is given by
\begin{equation} \label{eq:FTgammatau3}
\prescript{}{\gamma}{\mathcal{F}}_\tau f(\lambda):= \int_G e^\tau_{\lambda,1}(g) f(g) \;dg, \;\;\;\; \lambda \in \mathfrak{a}^*_\mathbb{C}.
\end{equation}
Similar, for $T \in C^{-\infty}_c(G,\gamma, \tau)$, we have
$ \prescript{}{\gamma}{\mathcal{F}}_\tau T(\lambda):= \langle T, e^\tau_{\lambda,1} \rangle.$
\end{itemize}
\end{defn}
Note that $\mathcal{F}_\tau(f)$, $\mathcal{F}_\tau(T) \in \text{Hol}(\mathfrak{a}^*_\mathbb{C}, H^{\tau\vert_M}_\infty)$, where
$$
H^{\tau\vert_M}_\infty :
= \{f:K \stackrel{C^\infty}{\rightarrow} E_\tau \;\vert\; f(km)=\tau(m)^{-1}f(k)\}
$$
and that $\prescript{}{\gamma}{\mathcal{F}}_\tau(f)$, $\prescript{}{\gamma}{\mathcal{F}}_\tau(T) \in \text{Hol}(\mathfrak{a}^*_\mathbb{C}, \text{Hom}_M(E_\gamma,E_\tau))$.
We also remark that $\text{Hom}_K(E_\gamma,H^{\tau\vert_M}_\infty) \cong \text{Hom}_M(E_\gamma,E_\tau),$ by Frobenius reciprocity.
We will also consider $H^{\tau\vert_M,\lambda}_{\infty,(m)}$ defined as in Def.~\ref{def:succderv} with $\sigma$ replaced by $\tau\vert_M$.

\begin{defn}
We define
$$
\emph \Hom_M(E_\tau,E_\sigma)^{\lambda}_{(m)}:= \emph \Hol(\mathfrak{a}^*_\mathbb{C}, \emph \Hom_M(E_\tau,E_\sigma))/m_\lambda^{m+1} \; \emph \Hol(\mathfrak{a}^*_\mathbb{C}, \emph \Hom_M(E_\tau,E_\sigma))
$$
as in (\ref{eq:succderv}).
For $\tau \in \widehat{K}$ and an intertwining datum $(\xi,W)$,
we set
\begin{eqnarray} \label{eq:DtauW}
D^\tau_W&:=& \Big\{t \in \bigoplus_{i=1}^s \emph \Hom_M(E_\tau,E_{\sigma_i})^{\lambda_i}_{(m_i)} \vert \nonumber \\
&\;\;\;\;\;\;\;\;\;\;\;\;\;\;\;&
\;\;\;\;\;\; T=\widetilde{Frob}^{-1}(t) \in \emph \Hom_K(E_\tau,W) \subset\emph \Hom_K(E_\tau, H_\xi) \Big\} \nonumber \\
&\subset&\bigoplus_{i=1}^s \emph \Hom_M(E_\tau,E_{\sigma_i})^{\lambda_i}_{(m_i)}.
\end{eqnarray}
\end{defn}

We have shown \cite[Def.~7,8; Prop.~7 \& Thm.~2]{PalmirottaPWS} that Delorme's intertwining condition $(D.a)$ corresponds to the following intertwining conditions in (\textcolor{blue}{Level~2}) and (\textcolor{blue}{Level 3}).

\begin{defn}[Intertwining conditions in (\textcolor{blue}{Level 2}) and (\textcolor{blue}{Level 3}), \cite{PalmirottaPWS}, Thm.~2 ] \label{defn:intcondL2L3}
Let $\Xi$ be as in Def.~\ref{defn:intwcond} and
 $\overline{\Xi}$ be the set of all tuples $(\lambda,m)$ with $\lambda \in \mathfrak{a}^*_\mathbb{C}$ and $m\in \mathbb{N}_0$.
We define a map
$$ \Xi \longrightarrow \overline{\Xi}, \;\;\; \xi=(\sigma,\lambda,m) \mapsto \overline{\xi}=(\lambda,m).$$
For $s\in  \mathbb{N}$ and $\xi \in \Xi^s$, we have the corresponding element $\overline{\xi}\in \overline{\Xi}^s$.
\begin{itemize}
\item[(D.2)] (\textcolor{blue}{Level 2})
We say that $\psi \in \emph \Hol(\mathfrak{a}^*_\mathbb{C}, H^{\tau\vert_M}_\infty)$ satisfies the intertwining condition, if for each intertwining datum $(\xi,W)$ and each non-zero $t=(t_1,t_2, \dots, t_s) \in D^\tau_W$, the induced element
$\psi_{\overline{\xi}} \in  \bigoplus_{i=1}^s H^{\tau\vert_M,\lambda_i}_{\infty,(m_i)}=:H^{\tau\vert_M}_{\overline{\xi}}$ satisfies
$$ t \circ \psi_{\overline{\xi}}=(t_1 \circ \psi_1, \dots, t_2 \circ \psi_s) \in W.$$
\end{itemize}
\begin{itemize}
\item[(D.3)] (\textcolor{blue}{Level 3})
We say that $\varphi \in  \emph \Hol(\mathfrak{a}^*_\mathbb{C},  \emph \Hom_M(E_\gamma,E_\tau))$  satisfies the intertwining condition, if for each intertwining datum $(\xi,W)$ and each non-zero $t=(t_1,t_2, \dots, t_s) \in D^\tau_W$, the induced element
$\varphi_{\overline{\xi}} \in  \bigoplus_{i=1}^s \emph \Hom_M(E_\gamma,E_\tau)^{\lambda_i}_{(m_i)} =:H^{\gamma,\tau}_{\overline{\xi}}$ satisfies
$$  t \circ \varphi_{\overline{\xi}}=(t_1 \circ \varphi_1, \dots, t_2 \circ \varphi_s) \in D^\gamma_W.$$
\end{itemize}
\end{defn}

\begin{exmp}[\cite{PalmirottaPWS}, Example~1] \label{exmp:IntwOp}
Consider now $s=2$ and $m_1=m_2=0$.
Let $$L: H^{\sigma_1,\lambda_1}_\infty \longrightarrow  H^{\sigma_2,\lambda_2}_\infty$$ 
be an intertwining operator between principal series representations.\\
The corresponding intertwining datum is given by
$\xi:=((\sigma_1,\lambda_1,0),(\sigma_2,\lambda_2,0)) \in \Xi^2$ and
$W=graph(L) \subset  H^{\sigma_1,\lambda_1}_\infty \oplus H^{\sigma_2,\lambda_2}_\infty$.
Moreover, define
$$l^\tau: \text{Hom}_M(E_\tau,E_{\sigma_1}) \longrightarrow \text{Hom}_M(E_\tau,E_{\sigma_2})$$ by
$$l^\tau(t)(v)=L(t \tau(\cdot)^{-1}v)(e)=L(\phi_v(t))(e),$$
where the element $\phi_v(t) \in H_\infty^\sigma$ is given by the function $\phi_v(t)(k):=t \tau(k^{-1})v,$
for $k \in K, v\in E_\tau$ and $t\in \text{Hom}_M(E_\tau,E_{\sigma_1})$.
Then
\begin{eqnarray*}
D^\tau_W=\{(t_1,t_2)\;\vert\; t_2=l^\tau(t_1)\} 
&=& \{(t,l^\tau(t))\;\vert\; t \in \text{Hom}_M(E_\tau,E_{\sigma_1})\} \\
&\subset& \text{Hom}_M(E_\tau,E_{\sigma_1}) \oplus \text{Hom}_M(E_\tau,E_{\sigma_2}).
\end{eqnarray*}
In this situation, we get the following intertwining conditions.
\begin{itemize}
\item[(D2.L)] (\textcolor{blue}{Level 2}) For each $t \in \text{Hom}_M(E_\tau,E_{\sigma_1})$, we have for $\psi(\lambda_i, \cdot) \in H^{\tau\vert_M, \cdot}_\infty, i=1,2$
\begin{equation} \label{eq:Lintwcond2}
 L( t\circ \psi(\lambda_1, \cdot))= l^\tau(t) \circ \psi(\lambda_2, \cdot).
\end{equation}
\item[(D3.L)] (\textcolor{blue}{Level 3})
For each $t \in \text{Hom}_M(E_\tau,E_{\sigma_1})$, we have for $\varphi(\lambda_i) \in \text{Hom}_M(E_\gamma,E_\tau), i=1,2$
\begin{equation} \label{eq:Lintwcond3}
 l^\gamma(t \circ \varphi(\lambda_1)) = l^\tau(t) \circ \varphi(\lambda_2).
\end{equation}
\end{itemize}
\end{exmp}

Similar as in (\textcolor{blue}{Level 1}), for $r>0$, we introduce a Paley-Wiener space in:
$$ \text{(\textcolor{blue}{Level 2}) } PW_{r,\tau}(\mathfrak{a}^*_\mathbb{C} \times K/M) \subset \text{Hol}(\mathfrak{a}^*_\mathbb{C}, H^{\tau\vert_M}_\infty)$$
and
$$ \text{(\textcolor{blue}{Level 3}) } \prescript{}{\gamma}{PW}_{\tau}(\mathfrak{a}^*_\mathbb{C}) \subset \text{Hol}(\mathfrak{a}^*_\mathbb{C}, \text{Hom}_M(E_\gamma,E_\tau)),$$
which is characterised by the usual growth condition \cite[Def.~$(2.ii)_r$ and $(3.ii)_r$, resp.]{PalmirottaPWS} and the corresponding intertwining condition $(D.2)$ and  $(D.3)$, respectively.
The Paley-Wiener-Schwartz space for distributional sections is denoted by $PWS_{\tau}(\mathfrak{a}^*_\mathbb{C} \times K/M)$ respectively $\prescript{}{\gamma}{PWS}_{\tau}(\mathfrak{a}^*_\mathbb{C})$.
It is characterised in the same way as above except that we replace the growth condition by a weaker one \cite[Def.~$(2.iis)_r$ and $(3.iis)_r$, resp.]{PalmirottaPWS}.
As in \cite[Sect.~6]{PalmirottaPWS} we equip the Paley-Wiener(-Schwartz) space with the inductive limit topology and the space of distributional sections $C_c^{-\infty}(X,\mathbb{E}_\tau)$ and $C^{-\infty}_c(G,\gamma, \tau)$ with the strong dual topology.
In \cite{PalmirottaPWS}, we derived the following.

\begin{thm}[Topological Paley-Wiener(-Schwartz) theorem for sections in (\textcolor{blue}{Level 2}) and (\textcolor{blue}{Level 3}), \cite{PalmirottaPWS} Thms.~3 \& 4] \label{thm:PWsect}
The Fourier transform
$$C_c^{\pm\infty}(X,\mathbb{E}_\tau) \ni \psi \mapsto \mathcal{F}_{\tau}(\psi)(\lambda, k)\in PW(S)_{\tau}(\mathfrak{a}^*_\mathbb{C} \times K/M), \;\;\; (\lambda,k) \in \mathfrak{a}^*_\mathbb{C} \times K$$
is a topological isomorphism between $C_c^{\pm\infty}(X,\mathbb{E}_\tau)$ and $PW(S)_{\tau}(\mathfrak{a}^*_\mathbb{C} \times K/M)$.\\
Moreover, by considering an additional $K$-representation $(\gamma,E_\gamma)$ with associated homogeneous vector bundle $\mathbb{E}_\gamma$, then the Fourier transform
$$ C^{\pm\infty}_c(G,\gamma, \tau) \ni \varphi \mapsto \prescript{}{\gamma}{\mathcal{F}}_{\tau}(\varphi)(\lambda) \in \prescript{}{\gamma}{PW(S)}_{\tau}(\mathfrak{a}^*_\mathbb{C}), \;\;\; \lambda \in \mathfrak{a}^*_\mathbb{C}$$
is a topological isomorphism between $C^{\pm \infty}_c(G,\gamma, \tau)$ and $\prescript{}{\gamma}{PW(S)}_{\tau}(\mathfrak{a}^*_\mathbb{C})$. \qed
\end{thm}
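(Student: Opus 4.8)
The plan is to deduce both isomorphisms from Delorme's Paley--Wiener theorem in (\textcolor{blue}{Level 1}), Theorem~\ref{thm:Delorme1}, by realising the section-valued Fourier transforms as the ``$\tau$-isotypic part'' of the operator-valued group Fourier transform $\mathcal{F}_{\sigma,\lambda}$. The bridge is the $G$-isomorphism $C^\infty_c(X,\E_\tau)\cong[C^\infty_c(G)\otimes E_\tau]^K$ together with Frobenius reciprocity $\Hom_K(E_\tau,H^\sigma_\infty)\cong\Hom_M(E_\tau,E_\sigma)$, and likewise $\Hom_K(E_\gamma,H^{\tau|_M}_\infty)\cong\Hom_M(E_\gamma,E_\tau)$ for (\textcolor{blue}{Level 3}).

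First, for (\textcolor{blue}{Level 2}) I would identify $\mathcal{F}_\tau f(\lambda,\cdot)\in H^{\tau|_M}_\infty$ with the data extracted from $\pi_{\sigma,\lambda}(\tilde f)$ for all $\sigma\in\widehat M$ occurring in $\tau|_M$, where $\tilde f\in[C^\infty_c(G)\otimes E_\tau]^K$ corresponds to $f$. Under this identification the growth condition defining $PW_{r,\tau}(\aL^*_\C\times K/M)$ is exactly the pullback of the growth condition defining $PW_r(G)$; this is the routine estimate, carried out as in the scalar case. The representation-theoretic core is then to show that the intertwining condition $(D.2)$ of Def.~\ref{defn:intcondL2L3} on $\psi=\mathcal{F}_\tau f$ is equivalent to Delorme's condition $(D.a)$ of Def.~\ref{defn:intwcond} on the $\tau$-isotypic part of the associated $\phi$. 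This is precisely what the space $D^\tau_W$ and the Frobenius map $\widetilde{Frob}$ are designed to encode: testing $\phi_\xi(W)\subseteq W$ against every intertwining datum $(\xi,W)$ translates, after applying $\widetilde{Frob}^{-1}$, into testing $t\circ\psi_{\overline\xi}\in W$ for every non-zero $t\in D^\tau_W$, and conversely.

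With the forward direction in hand, bijectivity and continuity of the inverse constitute the analytic heart. Given $\psi\in PW_{r,\tau}$, I would first invoke Theorem~\ref{thm:Delorme1} to realise the corresponding (\textcolor{blue}{Level 1}) datum by a function on $G$ supported in $\overline{B}_r(o)$, and then project onto the $\tau$-isotypic component via the $K$-action to obtain a section; the matching of intertwining conditions guarantees that this projection is consistent and lands in $C^\infty_r(X,\E_\tau)$ with the correct support. The topological statement --- that $\mathcal{F}_\tau$ is a homeomorphism between the Fréchet, resp.\ inductive-limit, spaces --- then follows from the open mapping theorem once bijectivity and continuity are established, using that the identifications above are themselves topological isomorphisms. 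The (\textcolor{blue}{Level 3}) statement is obtained by running the same reduction for bi-$K$-equivariant functions, additionally restricting to the $\gamma$-equivariant part so that $(D.3)$, valued in $D^\gamma_W$, encodes compatibility on both the $\tau$- and $\gamma$-sides; alternatively it can be deduced from (\textcolor{blue}{Level 2}) by taking the $E_\gamma$-component via Frobenius reciprocity. Finally, the distributional ($-\infty$) versions follow from the smooth ($+\infty$) versions by duality: relaxing the growth condition to the Paley--Wiener--Schwartz condition and transposing the established isomorphism between the corresponding test-section spaces.

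The step I expect to be the main obstacle is the equivalence $(D.a)\Leftrightarrow(D.2)$, and its (\textcolor{blue}{Level 3}) analogue, since it requires carefully tracking how a proper closed $G$-submodule $W\subseteq H_\xi$, the derived representations $H^{\sigma_i,\lambda_i}_{\infty,(m_i)}$, and the space $D^\tau_W$ interact through $\widetilde{Frob}$, and verifying that quantifying over all Level-1 data $(\xi,W)$ is equivalent to quantifying over all $(\xi,W)$ together with all $t\in D^\tau_W$. Controlling the support under the reconstruction map --- ensuring the object produced from Paley--Wiener data is genuinely compactly supported in $\overline{B}_r(o)$ rather than merely having the prescribed transform --- is the other delicate point.
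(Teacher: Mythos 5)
This theorem is quoted in the paper without proof --- it is imported verbatim from \cite{PalmirottaPWS} (Thms.~3 \& 4), so there is no in-paper argument to compare against line by line. Your outline is consistent with the strategy the paper attributes to that reference: the correspondence between $(D.a)$ and $(D.2)$/$(D.3)$ via $D^\tau_W$, $\widetilde{Frob}$ and Frobenius reciprocity is exactly what \cite{PalmirottaPWS} (Def.~7+8, Prop.~7 \& Thm.~2) establishes, and the reduction to Delorme's Level~1 theorem together with the topological and distributional refinements is the route taken there, so your proposal takes essentially the same approach as the source.
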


In Example~\ref{exmp:IntwOp}, we worked out the intertwining conditions coming from intertwining operators between principal series representations of $G$.
We want to make them more explicit for the most important case, the Knapp-Stein intertwining operators, which we now recall.\\

\noindent
\textbf{Knapp-Stein intertwining operator.}
Let $W_A:=N_K(\mathfrak{a})/M$ be the Weyl group.
Note that $W_A$ acts on $\mathfrak{a}^*_\mathbb{C}$ as well as on $\widehat{M}$.
Let $w\in W_A$ be represented by $m_w \in M':=N_K(\mathfrak{a})$ and $\sigma\in \widehat{M}$. 
We realise $\sigma$ on the vector space $E_\sigma.$
We define a new representation $w\sigma \in \widehat{M}$ of $M$ acting on $E_\sigma$ by
$$w\sigma(m):=\sigma(m_w^{-1}mm_w), \;\; m\in M.$$
Its equivalence class only depends on $w \in W_A$ and not on the choice of $m_w$.
\begin{defn}(Knapp-Stein intertwining operator, \cite{KnappSteinI}, \cite{KnappSteinII} \& \cite[ Chap. V]{Knapp1}) \label{def:KSintop}
Let $\Delta^+_\mathfrak{a}$ be the positive root system of $(\mathfrak{g},\mathfrak{a})$ corresponding to $N$.
Let $\Delta^-_\mathfrak{a}:=-\Delta^+_\mathfrak{a}$ and $\overline{N}$ be the unipotent subgroup coming from the associated Iwasawa decomposition corresponding to $\Delta^-_\mathfrak{a}$. Write $N_w:=N\cap w\overline{N}w^{-1}$.
For $(\sigma, \lambda) \in \widehat{M} \times \mathfrak{a}^*_\mathbb{C}$ with $(\text{Re}(\lambda), \alpha) >0$, for all $\alpha \in \Delta^+_\mathfrak{a} \cap w^{-1}\Delta^-_\mathfrak{a}$ 
and for a fixed representative $m_w \in M'$, we define the intertwining operator
\begin{equation*}\label{eq:KSintop}
J_{w,\sigma,\lambda}: H^{\sigma,\lambda}_\infty \longrightarrow H^{w\sigma,w\lambda}_\infty
\end{equation*}
by the convergent integral
$$
J_{w,\sigma,\lambda}(\varphi(g)):= \int_{{N}_w} \varphi(g{n}m_w)\;d{n}, \;\; g\in G, \varphi \in H^{\sigma,\lambda}_\infty,
$$
which depends holomorphically on $\lambda \in \mathfrak{a}^*_\mathbb{C}$.
This operator has a meromorphic continuation to $\mathfrak{a}^*_\mathbb{C}$.
\end{defn}

For later reference, let us state the intertwining conditions coming from the Knapp-Stein operators.

\begin{exmp}[Knapp-Stein intertwining condition in (\textcolor{blue}{Level 1}]
\label{exmp:Delormecond}
For $w\in W_A$ and fixed $(\sigma, \lambda) \in \widehat{M} \times \mathfrak{a}^*_\mathbb{C}$, we consider the Knapp-Stein intertwining operator $J_{w,\sigma,\lambda}$ as in Def.~\ref{def:KSintop}. Let
$\phi \in \prod_{\sigma \in \widehat{M}} \text{Hol}(\mathfrak{a}^*_\mathbb{C},\text{End}(H^\sigma_\infty))$. 
Then, the condition
\begin{equation} \label{eq:intwcondKS}
J_{w,\sigma,\lambda} \circ \phi(\sigma,\lambda) = \phi(w\sigma,w\lambda) \circ J_{w,\sigma,\lambda}
\end{equation}
is a special intertwining condition of $(D.a)$ in Def.~\ref{defn:intwcond}.
Note that the corresponding intertwining datum $(\xi,W)$ is given by
$\xi=((\sigma,\lambda,0),(w\sigma,w\lambda, 0))$ and
$W=graph(J_{w,\sigma,\lambda}) \subset H^{\sigma,\lambda}_{\infty} \oplus H^{w\sigma,w\lambda}_{\infty},$ (compare Example~\ref{exmp:IntwOp}).
\end{exmp}

\begin{defn}[Harish-Chandra $\cc$-function, (e.g. \cite{Martin}, Def. 3.8)] \label{def:cfct}
Let $\tau \in \widehat{K}, w\in W_A$, $\overline{N}_w:=\overline{N} \cap w^{-1}Nw$ and $\lambda \in \mathfrak{a}^*_\mathbb{C}$ with
$(\text{Re}(\lambda), \alpha ) >0$, for all $\alpha \in \Delta^+_\mathfrak{a} \cap w^{-1}\Delta_\mathfrak{a}^-$. The $\mathbf{c}$-function is defined by
$$\mathbf{c}_{w,\tau}(\lambda):=\int_{\overline{N}_w} a(\overline{n})^{-(\lambda + \rho)}\tau(\kappa(\overline{n})) \; d\overline{n} \in \text{End}_M(E_\tau),$$
which can be extended to a meromorphic function on $\mathfrak{a}^*_\mathbb{C}$.\\
Furthermore, we have 
$\mathbf{c}_{w,\tau}(\sigma,\lambda):=pr_\sigma \circ \mathbf{c}_{w,\tau}(\lambda) \circ pr_\sigma \in \text{End}_M(E_\tau(\sigma)),$
where $pr_\sigma:E_\tau \longrightarrow E_\tau(\sigma)$ is the projection on the $\sigma$-isotypic compenent.\\
Consider now $w\in W_A$ as a Weyl element with maximal length, then, we set
$$\mathbf{c}_\tau(\lambda):=\mathbf{c}_{w,\tau}(\lambda) \;\;\;\;\; \text{ and }\;\;\;\;\; \mathbf{c}_\tau(\sigma,\lambda):=\mathbf{c}_{w,\tau}(\sigma,\lambda).$$
\end{defn}

\noindent
One can express the Knapp-Stein intertwining operators, by the Harish-Chandra $\mathbf{c}$-functions \cite[Lem. 3.12 \& Satz 3.13]{Martin}:
\begin{eqnarray} \label{eq:normKSintw}
J_{w,\sigma,\lambda}(\phi_v(t))
=\phi_v(t \circ \tau(m_w^{-1})\mathbf{c}_{w^{-1},\tau}(-w\lambda)),
\end{eqnarray}
where $\phi_v(t)$ is defined as in Example~\ref{exmp:IntwOp} for $v \in E_\tau$ and $t\in \text{Hom}_M(E_\tau,E_\sigma).$
Set $J_{w,\tau,\lambda}:= \tau(m_w)J_{w,\tau\vert_M,\lambda}$, which is independent of the choice of $w\in W_A$.
Hence, this leads to the following statement.

\begin{prop}[Knapp-Stein intertwining condition in (\textcolor{blue}{Level 2}) and (\textcolor{blue}{Level 3})] \label{prop:normKSLevel2}
\noindent
\begin{itemize}
\item[(a)] (\textcolor{blue}{Level 2}) Let $\psi \in \emph \Hol(\mathfrak{a}^*_\mathbb{C}, H_\infty^{\tau\vert_M})$ satisfying $(D.2)$.
Then, we have
\begin{equation} \label{eq:intwcondLevel2}
J_{w,\tau,\lambda} \psi(\lambda, \cdot) = \mathbf{c}_{w^{-1},\tau}(-w\lambda) \psi(w\lambda, \cdot), \;\;\; \lambda \in \mathfrak{a}^*_\mathbb{C}, w \in W_A.
\end{equation}
\item[(b)] (\textcolor{blue}{Level 3}) Let $\varphi \in \emph \Hol(\mathfrak{a}^*_\mathbb{C}, \emph \Hom_M(E_\gamma,E_\tau))$ satisfying $(D.3)$.
Then, we have
\begin{equation} \label{eq:normKSintwop3}
\varphi(\lambda) \gamma(m_w^{-1}) \mathbf{c}_{w^{-1},\gamma}(- w\lambda)=\tau(m_w^{-1})\mathbf{c}_{w^{-1},\tau}(-w\lambda) \circ \varphi(w\lambda),\;\;\; \lambda \in \mathfrak{a}^*_\mathbb{C}, w \in W_A.
\end{equation} \qedhere
\end{itemize}
\end{prop}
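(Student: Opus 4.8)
The plan is to realize both identities as specializations of the general intertwining conditions (D2.L) and (D3.L) of Example~\ref{exmp:IntwOp}, applied to the Knapp--Stein operator $L=J_{w,\sigma,\lambda}$ with intertwining datum $\xi=((\sigma,\lambda,0),(w\sigma,w\lambda,0))$ and $W=graph(J_{w,\sigma,\lambda})$, as recorded in Example~\ref{exmp:Delormecond}. The first task is to evaluate the associated map $l^\tau$ explicitly. By definition $l^\tau(t)(v)=J_{w,\sigma,\lambda}(\phi_v(t))(e)$, and since $\phi_v(s)(e)=s\,v$ for any $s$, the normalization formula \eqref{eq:normKSintw} yields at once
\begin{equation*}
l^\tau(t)=t\circ\tau(m_w^{-1})\,\cc_{w^{-1},\tau}(-w\lambda),
\end{equation*}
and, with $\gamma$ in place of $\tau$, the analogous expression for $l^\gamma$.

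For part (a), I would substitute this into \eqref{eq:Lintwcond2}. The left-hand side $J_{w,\sigma,\lambda}(t\circ\psi(\lambda,\cdot))$ simplifies by pulling the fixed $M$-equivariant map $t$ through the defining integral of the Knapp--Stein operator: linearity of the integrand in the $E$-value gives $J_{w,\sigma,\lambda}(t\circ\psi)=t\circ J_{w,\tau|_M,\lambda}(\psi)$. Hence \eqref{eq:Lintwcond2} becomes $t\circ J_{w,\tau|_M,\lambda}\psi(\lambda,\cdot)=t\circ\tau(m_w^{-1})\cc_{w^{-1},\tau}(-w\lambda)\psi(w\lambda,\cdot)$. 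Letting $\sigma$ range over $\widehat{M}$ and $t$ over $\Hom_M(E_\tau,E_\sigma)$, these maps separate the points of $E_\tau$, so $t$ can be cancelled; applying $\tau(m_w)$ on the left and using $J_{w,\tau,\lambda}=\tau(m_w)J_{w,\tau|_M,\lambda}$ then produces \eqref{eq:intwcondLevel2}.

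For part (b), the argument is purely algebraic. Inserting the formulas for $l^\gamma$ and $l^\tau$ into \eqref{eq:Lintwcond3} turns it into
\begin{equation*}
t\circ\varphi(\lambda)\,\gamma(m_w^{-1})\cc_{w^{-1},\gamma}(-w\lambda)=t\circ\tau(m_w^{-1})\cc_{w^{-1},\tau}(-w\lambda)\circ\varphi(w\lambda),
\end{equation*}
valid for every $t\in\Hom_M(E_\tau,E_\sigma)$ and every $\sigma\in\widehat{M}$. Cancelling $t$ by the same separation argument gives exactly \eqref{eq:normKSintwop3}.

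The routine parts are the two substitutions and the evaluation of $l^\tau,l^\gamma$; the step deserving care is the cancellation of $t$, which requires that, as $\sigma$ runs over $\widehat{M}$, the maps in $\Hom_M(E_\tau,E_\sigma)$ have trivial common kernel --- this follows from decomposing $E_\tau$ into its $M$-isotypic components. In part (a) one must additionally justify commuting the fixed map $t$ with the intertwining integral and verify that the normalization $J_{w,\tau,\lambda}=\tau(m_w)J_{w,\tau|_M,\lambda}$ does not depend on the representative $m_w$; both identities should first be established in the region where the defining integrals converge and then extended to all $\lambda\in\aL^*_\C$ by meromorphic continuation of the $\cc$-functions and of the Knapp--Stein operators.
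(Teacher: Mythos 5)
Your proposal follows essentially the same route as the paper: it specializes the conditions (D2.L) and (D3.L) of Example~\ref{exmp:IntwOp} to $L=J_{w,\sigma,\lambda}$ and uses the normalization formula (\ref{eq:normKSintw}) to compute $l^\tau(t)=t\circ\tau(m_w^{-1})\circ\cc_{w^{-1},\tau}(-w\lambda)$, which is exactly the paper's (very terse) argument. The additional justifications you supply --- pulling $t$ through the intertwining integral, cancelling $t$ by letting $\sigma$ range over $\widehat{M}$, and meromorphic continuation --- are correct elaborations of steps the paper leaves implicit.
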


\begin{proof}
We consider the operator $l^\tau: \text{Hom}_M(E_\tau,E_\sigma) \longrightarrow \text{Hom}_M(E_\tau,E_\sigma)$ as in Example~\ref{exmp:IntwOp} associated by $L=J_{w,\sigma,\lambda}$.
Then (\ref{eq:normKSintw}) says that
$$l^\tau (t)=t \circ \tau(m_w^{-1}) \circ c_{w^{-1},\tau}(-w\lambda).$$
Now the proposition follows from (\ref{eq:Lintwcond2}) and (\ref{eq:Lintwcond3}), as in Example~\ref{exmp:IntwOp}.
\end{proof}

\begin{exmp} 
\begin{itemize}
\item[(a)] Let $\tau$ be a trivial one-dimensional represenation. Then,
 $C_c^\infty(X,\mathbb{E}_\tau)=C^\infty_c(X)$ and $H^{\tau\vert_M}_\infty=C^\infty(K/M)$.
Helgason showed in \cite[Thm. 5.1.]{Helgason3} that $\beta \in \text{Hol}(\mathfrak{a}^*_\mathbb{C}, C^\infty(K/M))$ belongs to the Paley-Wiener space if, and only, if it satisfies the usual growth condition and 
the intertwining condition:
\begin{equation} \label{eq:Helgasonintw}
\int_{K/M} e_{w\lambda,k}^\tau(g) \beta(w\lambda,k) dk = \int_{K/M} e_{-\lambda,k}^\tau(g) \beta(\lambda,k) dk, \;\;\;\;\; w\in W_A.
\end{equation}
It is not difficult to show that Helgason's intertwining condition is equivalent to (\ref{eq:intwcondLevel2}).
In fact, for $\lambda \in \mathfrak{a}^*_\mathbb{C}$, consider the Poisson transform (e.g. \cite{Helgason3} or \cite[Def. 3.2]{Martin})
$\mathcal{P}_{\lambda}: C^\infty(K/M) \longrightarrow C^\infty(X)$
given by
$
\mathcal{P}_{\lambda}(f)(g):= \int_K e^{\tau}_{-\lambda,k}(g) f(k) \; dk,$ for $g \in G$.
Then, Helgason's condition (\ref{eq:Helgasonintw}) can be expressed in terms of Poisson transform
$$
\mathcal{P}_{\lambda} \circ \beta_\lambda = \mathcal{P}_{w\lambda} \circ \beta_{w\lambda}, \;\;\;\;\; w\in W_A, \lambda \in \mathfrak{a}^*_\mathbb{C},
$$
where $\beta_\lambda:=\beta(\lambda, \cdot)$.
The result now follows from the functional equation of the Poisson transform \cite[Satz~3.15]{Martin}.
\item[(b)]
Let $\tau$ and $\gamma$ be two trivial one-dimensional representations.
Consider a function $\beta\in \text{Hol}(\mathfrak{a}^*_\mathbb{C})$ which satisfies the usual growth condition.
Helgason \cite[Thm. 7.1]{Helgason2} and Gangolli \cite{Gangolli} proved that $\beta \in \prescript{}{\gamma}{PW}_\tau(\mathfrak{a}^*_\mathbb{C})$, if and only, if
$$
\beta(\lambda)=\beta(w\lambda), \text{ for } \lambda\in \mathfrak{a}^*_\mathbb{C}, w\in W_A.
$$
This condition is equivalent to (\ref{eq:normKSintwop3}) in the case $\gamma, \tau$ are trivial.
\end{itemize}
\end{exmp}

\section{Sufficient intertwining conditions for rank one} \label{sect:Meta}
We want to reduce the amount of intertwining data in $(D.a)$ of Def.~\ref{defn:intwcond} to a minimum.
In this section, we assume that  $G$ has real rank one.
In this situation, the set of positive restricted roots $\Delta^+_\mathfrak{a}$ consists of at most two elements, namely $\alpha$ and possibly $2\alpha$. 
The Weyl group is reduced to $\{-1,1\}$ acting on $\mathfrak{a}^*_\mathbb{C}$ by multiplication.

We have the following special intertwining conditions.
An irreducible unitary representation $(\pi,E_\pi)$ of $G$ is called a representation of the discrete series if there is a $G$-invariant embedding $E_\pi \hookrightarrow L^2(G)$. 
Here, $L^2(G)$ denote the space of all square integrable functions with respect to invariant measure $dg$ on $G$.
Write $\widehat{G}_d$ for the set of equivalence classes of discrete series representations of $G$.
Let $H_{\pi}$ be any Hilbert space, where the representation $\pi \in \widehat{G}_d$ is realized.
Let $H^\infty_\pi \subset H_\pi$ be the corresponding space of smooth vectors.
For every representation of the discrete series $\pi \in \widehat{G}_{d}$, we choose an embedding
$$i_\pi: H^\infty_{\pi} \hookrightarrow H^{\sigma_\pi,\lambda_\pi}_\infty$$
into some principal series representation (Casselman's subrepresentation theorem \cite[Thm. 3.8.3.]{Wallach} and Casselman's and Wallach's globalization theorem \cite[Chap. 11]{Wallach2}) and set
$$W_\pi:= i_\pi(H^\infty_{\pi}) \subset H^{\sigma_\pi,\lambda_\pi}_\infty.$$
It is a closed $G$-invariant subspace.
Hence, the condition
\begin{equation} \label{eq:intwconddiscrete}
\phi(\sigma_\pi,\lambda_\pi)( W_\pi) \subset  W_\pi,\;\;\; \pi \in  \widehat{G}_{d}
\end{equation}
 is also of the form $(D.a)$, with $s=1$ and $m=0$, and it permits us to define
\begin{equation} \label{eq:phipi}
\phi(\pi):=\phi(\sigma_\pi,\lambda_\pi)\vert_{W_\pi} \in \text{End}(W_\pi).
\end{equation}

For $r>0$, we define the 'special' Paley-Wiener space $PW^+_r(G)$ by replacing Delorme's intertwining condition $(D.a)$ by the conditions (\ref{eq:intwcondKS}) and (\ref{eq:intwconddiscrete}), only.\\
Let $w\in W_A$ be the non-trivial element. For $\lambda \in \mathfrak{a}^*_\mathbb{C}$ with $(\text{Re}(\lambda), \alpha ) >0$,
let $m\in \mathbb{N}_0$ be the maximal order of the zeros of 
$J_{w,\sigma,\mu}(f_\mu)$ at $\mu=\lambda$, where $\mu \mapsto f_\mu \in H^{\sigma,\mu}_\infty$ runs over all germs of holomorphic functions at $\lambda$ with $f_\lambda \neq 0$.\\
We consider the induced operator
\begin{equation} \label{eq:m}
J_{w,\sigma,\lambda}^{(m-1)}: H^{\sigma,\lambda}_{\infty,(m-1)} \longrightarrow H^{w\sigma,-\lambda}_{\infty,(m-1)}
\end{equation}
and its kernel
$\text{Ker}(J_{w,\sigma,\lambda}^{(m-1)}) \subset H^{\sigma,\lambda}_{\infty,(m-1)}.$
By convention, we set $H^{\sigma,\lambda}_{\infty,(-1)}:=\{0\}$, for $m=0$.
Notice that due to condition (\ref{eq:intwcondKS}), we have for $\phi^{(m-1)}(\sigma,\lambda) \in \text{End}(H^{\sigma,\lambda}_{\infty,(m-1)})$:
$$\phi^{(m-1)}(\sigma,\lambda)(\text{Ker}(J_{w,\sigma,\lambda}^{(m-1)})) \subset \text{Ker}(J_{w,\sigma,\lambda}^{(m-1)}) \subset  H^{\sigma,\lambda}_{\infty,(m-1)}, \;\;\;\; (\sigma,\lambda) \in \widehat{M} \times \mathfrak{a}^*_\mathbb{C}.$$

Let $\tau \in \widehat{K}$ with highest weight $\mu_\tau \in i\mathfrak{t}^*$, where $\mathfrak{t} \subset \mathfrak{k}$ is the Lie algebra of a maximal torus $T \subset K$. We define
$$2\rho_c := \sum_{\alpha \in \Delta^+(\mathfrak{k},\mathfrak{t})} \alpha \in i\mathfrak{t}^*,$$
being the sum of all positive roots of $\mathfrak{t}_\mathbb{C}$ in $\mathfrak{k}_\mathbb{C}$.
For a $K$-representation $V$, we denote by $V(\tau)$ its corresponding isotypic component.
For $\sigma \in \widehat{M}$ and $\pi \in \widehat{G}_d$, we define $\vert\sigma\vert, \vert\pi\vert \in [0,\infty]$ by
$$\vert\sigma\vert:=\min_{\{\tau \;\vert\; H^\sigma_\infty(\tau) \neq \{0\}\}} \vert\vert\mu_\tau +2\rho_c\vert\vert  \text{ and } \vert\pi\vert:=\min_{\{\tau \;\vert\;  H^\infty_\pi(\tau) \neq \{0\}\}}\vert\vert\mu_\tau +2\rho_c\vert\vert,$$
i.e. $\vert\sigma\vert, \vert\lambda\vert$ are the lengths
of 'the' minimal $K$-type $\tau$ of $H^\sigma_\infty$ and $H^\infty_{\pi}$, respectively \cite[Sect. 1.3]{Delorme}.
Denote by $B(\sigma), B(\pi) \subset \widehat{K}$ the finite set of all minimal $K$-types of $H^\sigma_\infty, H^\infty_\pi.$

\begin{exmp} \label{exmp:SL2RM}
Let $G=\mathbf{SL}(2,\mathbb{R})$ and $K=\mathbf{SO}(2)$ its maximal compact subgroup. With the notations introduced in Sect.~\ref{sect:SL2R} below, we have that $\widehat{K} \cong \mathbb{Z}$ and $\rho_c=0.$
\begin{itemize}
\item[(i)] $M=\{\pm 1\}$, thus
\begin{itemize}
\item[-] if $\sigma$ is trivial, then $B(\sigma)=\{0\} \subset \mathbb{Z}$ (trivial $K$-type), and $\vert\sigma\vert=0,$
\item[-] if $\sigma$ is non-trivial, then $B(\sigma)=\{+1,-1\} \subset \mathbb{Z}$ and $\vert\sigma\vert=1$.
\end{itemize}
\item[(ii)] Let $\pi =D_{k}, k\in \mathbb{Z}\backslash \{0\}$ be the discrete series representation of $G$ parametrized as in Thm.~\ref{thm:Bargmann}  below, then
$$
B(D_{k})=\begin{cases} \{ k+1 \}, & k>0, \\  \{ k-1 \}, & k<0 \end{cases}$$
and $\vert\pi\vert=\vert k \vert +1$.
\end{itemize}
\end{exmp}

The following result tells us that the intertwining conditions (\ref{eq:intwcondKS}) and (\ref{eq:intwconddiscrete}) with an additional 'vanishing' condition are sufficient for semi-simple Lie groups $G$ of real rank one.

\begin{thm} \label{thm:Meta}
Let $\text{rk}_\mathbb{R}(G)=1$. For $r>0$, let $\mathcal{A}$ be a linear closed and $K\times K$-invariant subspace of $PW^+_r(G)$ satisfying $\mathcal{F}_{\sigma,\lambda}(C^\infty_r(G)) \subset \mathcal{A}$ and the following condition:
\begin{itemize}
\item[(D.b)] Let $\sigma \in \widehat{M}$ and $\phi \in \mathcal{A}$ such that
\begin{itemize}
\item[(i)] $\phi(\sigma', \lambda)=0$, for all $\sigma'\in \widehat{M}$ with $\vert\sigma'\vert >\vert\sigma\vert$ and $\lambda \in \mathfrak{a}^*_\mathbb{C}$,
\item[(ii)] $\phi(\pi)=0$, for all $\pi \in \widehat{G}_d$ with $\vert\pi\vert>\vert\sigma\vert$.
\end{itemize}
Then, for all $\lambda \in \mathfrak{a}^*_\mathbb{C}$ with $(\text{Re}(\lambda), \alpha ) >0$, $\phi$ induces the zero-operator on $ \emph \Ker(J_{w,\sigma,\lambda}^{(m-1)})$:
$$\phi^{(m-1)}(\sigma,\lambda)\Big\vert_{\emph \Ker(J_{w,\sigma,\lambda}^{(m-1)})}=0.$$ Here, $(m-1)$ depends on $(\sigma,\lambda)$ as defined above (\ref{eq:m}) and $\phi(\pi)$ is defined in (\ref{eq:phipi}).
\end{itemize}
Then, $$\mathcal{A}=PW_r(G) \cong \mathcal{F}_{\sigma,\lambda}(C^\infty_r(G)).$$
\end{thm}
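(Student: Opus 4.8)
The statement splits into the two inclusions $PW_r(G)\subseteq\mathcal{A}$ and $\mathcal{A}\subseteq PW_r(G)$, and only the second really uses the hypotheses on $\mathcal{A}$. For the first I would argue directly: the space $PW^+_r(G)$ is obtained from $PW_r(G)$ by keeping the growth condition and weakening Delorme's condition $(D.a)$ to the two conditions (\ref{eq:intwcondKS}) and (\ref{eq:intwconddiscrete}) only, so $PW_r(G)\subseteq PW^+_r(G)$; moreover Delorme's Paley--Wiener theorem (Thm.~\ref{thm:Delorme1}) identifies $\mathcal{F}_{\sigma,\lambda}(C^\infty_r(G))$ with $PW_r(G)$. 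Combining this with the hypothesis $\mathcal{F}_{\sigma,\lambda}(C^\infty_r(G))\subseteq\mathcal{A}$ gives $PW_r(G)\subseteq\mathcal{A}$ at once.

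For the reverse inclusion I would fix $\phi\in\mathcal{A}$ and show that $\phi$ already satisfies the full condition $(D.a)$; since $\phi\in PW^+_r(G)$ carries the growth condition, this places $\phi$ in $PW_r(G)$. Everything therefore reduces to checking $\phi_\xi(W)\subseteq W$ for an arbitrary intertwining datum $(\xi,W)$, knowing only that $\phi$ satisfies (\ref{eq:intwcondKS}), (\ref{eq:intwconddiscrete}) and $(D.b)$. Because $\rk_\R(G)=1$, the Weyl group is $\{\pm1\}$ and the reducibility of each derived principal series $H^{\sigma,\lambda}_{\infty,(m)}$ is governed by the single non-trivial Knapp--Stein operator. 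Using the $K\times K$-invariance of $\mathcal{A}$ I would first pass to $K$-type components, so that for a fixed $K$-type only the finitely many $\sigma\in\widehat{M}$ and $\pi\in\widehat{G}_d$ whose minimal $K$-types meet that $K$-type are relevant; this makes the invariant $|\sigma|$ (and $|\pi|$) a genuine finite induction parameter. By Delorme's intermediate result (\cite{Delorme}, Prop.~1), the verification of $(D.a)$ on an arbitrary $W$ then reduces to three building blocks: the Knapp--Stein graphs of Example~\ref{exmp:Delormecond}, the discrete-series submodules $W_\pi$, and the induced kernels $\Ker(J^{(m-1)}_{w,\sigma,\lambda})$. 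The first two are precisely conditions (\ref{eq:intwcondKS}) and (\ref{eq:intwconddiscrete}); and, as noted after (\ref{eq:m}), condition (\ref{eq:intwcondKS}) already forces $\phi^{(m-1)}(\sigma,\lambda)$ to \emph{preserve} each such kernel, so the only genuinely new information needed is that it in fact \emph{annihilates} it.

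This last point is supplied by $(D.b)$ through Delorme's induction on the length of the minimal $K$-type (\cite{Delorme}, Prop.~2). I would process the finitely many relevant levels from the largest downward: having treated all $\sigma'$ with $|\sigma'|>|\sigma|$ and all $\pi$ with $|\pi|>|\sigma|$ — that is, having arranged, by subtracting from $\phi$ the Fourier transforms $\mathcal{F}_{\sigma,\lambda}(f)\in\mathcal{A}$ realising the already-treated higher levels, that $\phi$ vanishes there — the hypotheses (i) and (ii) of $(D.b)$ hold, so $(D.b)$ yields $\phi^{(m-1)}(\sigma,\lambda)|_{\Ker(J^{(m-1)}_{w,\sigma,\lambda})}=0$. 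Combined with the Knapp--Stein condition, which determines $\phi^{(m-1)}(\sigma,\lambda)$ on the complementary quotient, this pins the operator down completely and shows it respects every submodule $W$ occurring at the current level, including the graphs that couple different summands of $H_\xi$. Descending through the levels and passing to the limit — legitimate because $\mathcal{A}$ is closed — yields $(D.a)$ for $\phi$, hence $\mathcal{A}\subseteq PW_r(G)$ and finally $\mathcal{A}=PW_r(G)\cong\mathcal{F}_{\sigma,\lambda}(C^\infty_r(G))$.

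The main obstacle is the inductive step itself: showing that the single annihilation statement of $(D.b)$ together with the Knapp--Stein condition on the quotient really controls $\phi_\xi$ on \emph{every} proper submodule of a direct sum of derived principal series at a given minimal-$K$-type level. The delicate cases are the derived modules $H^{\sigma,\lambda}_{\infty,(m)}$ with $m\ge1$, where the order $m$ of vanishing of the Knapp--Stein operator and the nilpotent jet structure make the submodule lattice far richer than in the $m=0$ situation of Example~\ref{exmp:IntwOp}. This is exactly where one needs the fine composition-series information encoded in Delorme's propositions, and — as the introduction stresses — it is this information that must be made explicit group by group, as carried out for $\SL(2,\R)$ and $\SL(2,\C)$ in the subsequent sections.
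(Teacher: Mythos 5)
Your first inclusion ($PW_r(G)\subseteq\mathcal{A}$) and the reduction to $K\times K$-finite elements are fine, but the heart of your argument --- that a $\phi$ satisfying (\ref{eq:intwcondKS}), (\ref{eq:intwconddiscrete}) and $(D.b)$ can be shown \emph{directly} to satisfy the full condition $(D.a)$, i.e.\ to preserve \emph{every} proper closed submodule $W$ of \emph{every} finite sum of derived principal series --- is precisely the step you never prove. You attribute to Delorme's Prop.~1 a reduction of ``$(D.a)$ on an arbitrary $W$'' to three building blocks (Knapp--Stein graphs, discrete-series submodules, kernels of $J^{(m-1)}_{w,\sigma,\lambda}$); but Prop.~1 of \cite{Delorme} is a \emph{surjectivity} statement --- it realises any element of the space $\mathcal{K}_\sigma$ as $\pi_{\sigma,\lambda}(f)$ for an actual $f\in C^\infty_r(G)$ --- and says nothing about the submodule lattice of $H_\xi$. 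Your inductive step then asserts that annihilation of the kernel plus the Knapp--Stein relation ``pins the operator down completely and shows it respects every submodule $W$ occurring at the current level, including the graphs that couple different summands''; this is an unproven claim, and, as you yourself concede, substantiating it would require explicit composition-series information available only group by group. That would defeat the purpose of Thm.~\ref{thm:Meta}, which is a general rank-one statement proved \emph{before} any case-by-case analysis.

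The paper's proof avoids verifying $(D.a)$ altogether. Since $PW_r(G)=\mathcal{F}_{\sigma,\lambda}(C^\infty_r(G))$ is closed, $K\times K$-invariant and contained in $\mathcal{A}$ by hypothesis, it suffices to show it is \emph{dense} in $\mathcal{A}$, i.e.\ that every $K\times K$-finite $\phi\in\mathcal{A}$ equals $\mathcal{F}(f)$ for some $f\in C^\infty_r(G)$. One inducts on $l(\phi)$, the maximal length of the minimal $K$-types of the finitely many $\sigma$ with $\phi_\sigma\neq 0$ and $\pi\in\widehat{G}_d$ with $\phi(\pi)\neq 0$. At the top level the hypotheses (i) and (ii) of $(D.b)$ hold automatically, so $(D.b)$ together with (\ref{eq:intwcondKS}) places each $\phi_{\sigma_i}$ in Delorme's space $\mathcal{K}_{\sigma_i}$; Prop.~1 and Eq.~(1.38) of \cite{Delorme} then produce $f_i,g_j\in C^\infty_r(G)$ whose Fourier transforms match $\phi$ exactly at that level and vanish at the others, so that $\psi=\phi-\sum_i\mathcal{F}(f_i)-\sum_j\mathcal{F}(g_j)$ has $l(\psi)<l(\phi)$ and the induction closes. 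The conclusion $\phi=\mathcal{F}(f)\in PW_r(G)$ then yields $(D.a)$ for free from Delorme's Theorem~\ref{thm:Delorme1}; no classification of intertwining data is ever needed. You should restructure your argument along these lines rather than attempting to derive $(D.a)$ intrinsically.
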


\begin{proof}[Proof of Thm.~\ref{thm:Meta}]
By Delorme's Paley-Wiener Thm.~\ref{thm:Delorme1}, we already know that $PW_r(G)\cong\mathcal{F}_{\sigma,\lambda}(C^\infty_r(G))$ is a closed and $K\times K$-invariant subspace of $PW^+_r(G)$.
Therefore, it suffices to show that $\mathcal{F}_{\sigma,\lambda}(C^\infty_r(G))\subset \mathcal{A}$ is dense.
Thus for every $K\times K$-finite element $\phi \in \mathcal{A}$, we need to find a function $f\in C^\infty_r(G)_{K\times K}$ such that
$$\pi_{\sigma,\lambda}(f)=\phi(\sigma,\lambda), \;\;\;\; \forall (\sigma,\lambda) \in \widehat{M} \times \mathfrak{a}^*_\mathbb{C}.$$
Let $\phi \in \mathcal{A}_{K\times K}$. It is given by a collection $(\phi_\sigma), \sigma \in \widehat{M}$.
By $K\times K$-finiteness, only finitely many $\phi_\sigma$ are non-zero.
Similar, by $K\times K$-finiteness, $\phi(\pi)=0$, for all but finitely many $\pi \in \widehat{G}_d$. Indeed, for any given $K$-type $\tau$, there are only finitely many $\pi \in \widehat{G}_d$, with $H^\infty_\pi(\tau) \neq 0$ (e.g. \cite[Cor. 7.7.3]{Wallach}).\\
We define $l(\phi) \in [0,\infty)$ by
$$l(\phi):=\max \{\vert\sigma\vert, \vert\pi\vert \;\vert\; \sigma \in \widehat{M}, \phi_\sigma \neq 0; \pi \in \widehat{G}_d, \phi(\pi) \neq 0\}.$$
We can now imitate the inductive proof of Prop. 2 in Delorme's paper \cite{Delorme}.\\
Assume, as induction hypothesis, that for all $\psi \in \mathcal{A}$ with $l(\psi) < l(\phi)$, there are $f\in C^\infty_r(G)$ with $\mathcal{F}(f)=\psi.$
We enumerate
$$ \{\sigma \in \widehat{M} \;\vert\; \vert\sigma\vert=(\phi) \}=\{\sigma_1, \dots, \sigma_n\} \cup \{w\sigma_1, \dots, w\sigma_n\}$$
and
$$ \{\pi \in \widehat{G}_d \;\vert\; \vert\pi\vert=(\phi) \}=\{\pi_1, \dots, \pi_s\}.$$
Condition $(ii)$ together with (\ref{eq:intwcondKS}) says, in particular, that $\phi_{\sigma_i}$ belongs to a space that Delorme denotes by $\mathcal{K}_{\sigma_i}$, \cite[Def.~1]{Delorme}.
Strictly speaking Delorme has a condition for $(\text{Re}(\lambda), \alpha ) \geq 0.$
But if $\text{rk}_\mathbb{R}(G)=1$, only $(\text{Re}(\lambda), \alpha ) >0$ matters.
Note, that $\phi(\pi_j)$ belongs automatically to $\mathcal{K}_{\pi_j}$.
We can apply Prop.~1 together with Eq. (1.38) of Delorme's paper \cite{Delorme}, to deduce the existence of
$f_1,f_2,\dots,f_n\in C^\infty_r(G)$ and  $g_1,g_2,\dots,g_s \in C^\infty_r(G)$ with
\begin{eqnarray*}
\pi_{\sigma_i,\lambda}(f_i)&=& \phi(\sigma_i,\lambda), \;\;\; i\in\{1,2,\dots,n\},\\
\pi_j(g_j)&=& \phi(\pi_j), \;\;\;\;\;\; j\in\{1,2,\dots,s\},
\end{eqnarray*}
for $\lambda \in \mathfrak{a}^*_\mathbb{C}$.
Moreover, the discussion after Eq. (3.9) in \cite[p.1018]{Delorme}, makes clear that we can choose the $f_i$ and $g_j$ such that
\begin{itemize}
\item[(i)] $l(\mathcal{F}(f_i))=l(\mathcal{F}(g_j))=l(\phi), \forall i\in\{1,2,\dots,n\},  j\in\{1,2,\dots,s\}$ and
\item[(ii)] $\pi_{\sigma_k,\lambda}(f_i)=0, \forall k \neq i,$
\item[(iii)] $\pi_{\sigma_i,\lambda}(g_j)=0, \forall i,j,$
\item[(iv)] $\pi_{j}(f_i)=0, \forall i,j,$
\item[(v)] $\pi_{k}(g_j)=0, \forall k \neq j.$
\end{itemize}
Now, we set
$$\psi:=\phi-\sum_{i=1}^n \mathcal{F}(f_i)-\sum_{j=1}^s \mathcal{F}(g_j).$$
Then, by (i)-(v) we have $l(\psi) < l(\phi)$. Thus, by induction hypothesis $\psi=\mathcal{F}(f_0)$.
We conclude that $\phi=\mathcal{F}(f)$ with
$f=f_0+f_1+\cdots+f_n+g_1+g_2+\cdots+g_s.$
\end{proof}

\begin{req}
The result above can be extended to higher real rank. 
The extension involves representations induced from all cuspidal parabolic subgroups $P$ as well as the Knapp-Stein intertwining operator for them.
\end{req}

\section{The case $G=\SL(2,\R)$ and beyond} \label{sect:SL2R}
We consider
$G=\mathbf{SL}(2,\mathbb{R})=\Big\{g:=\begin{pmatrix} a & b \\ c& d \end{pmatrix} \in \mathbf{GL}(2,\mathbb{R}) \;\Big\vert\; \text{det}(g)=1 \Big\},$
 the special linear group of $\mathbb{R}^2$. It has dimension three.
We fix the Iwasawa decomposition $G=KAN$, where
$$
K=\mathbf{SO}(2)=\Bigg\{k_\theta:=\begin{pmatrix} \cos \theta & \sin \theta \\ -\sin \theta & \cos \theta \end{pmatrix} \;\Big\vert\; \theta \in \mathbb{R}\Bigg\}, \;\;\; A=\Bigg\{a_t:=\begin{pmatrix} e^t & 0 \\ 0 & e^{-t} \end{pmatrix} \;\Big\vert\; t \in \mathbb{R} \Bigg\},
$$
$$N=\Bigg\{n_x:=\begin{pmatrix} 1 & x\\ 0 & 1\end{pmatrix} \;\Big\vert\; x \in  \mathbb{R} \Bigg\}.$$
$G$ is a connected and simple Lie group with maximal compact subgroup $K$.
Clearly, $K$ is isomorphic to the unit circle $\mathbb{S}^1$.
Hence
$$\widehat{K}=\{\delta_n\;\vert\; n\in  \mathbb{Z}\} \cong \mathbb{Z}, \;\; \delta_n(k_\theta):=e^{in\theta} \in \mathbf{GL}(1,\mathbb{C})\cong\mathbb{C}\backslash \{0\}.$$ 
The representation space $E_{\delta_n}$ is one-dimensional and equal to $\mathbb{C}$.
We sometimes denote the $K$-representation $\delta_n$ simply by $n$.
If $H=\begin{pmatrix} t & 0 \\ 0 & -t \end{pmatrix} \in \mathfrak{a}$, then the positive root $\alpha$ is given by $\alpha(H)=2t$ and $\rho(H)=t$. 
We identify $\mathfrak{a}^*_\mathbb{C}$ with $\mathbb{C}$ via $z\alpha \mapsto z$.
In particular, $\rho \mapsto \frac{1}{2}$.

Since $M=\{\pm \text{Id}\}$, we have $\widehat{M} \cong \mathbb{Z}/2\mathbb{Z}$.
Let $\{+\}$ be the trivial and $\{-\}$ the non-trivial element of $\widehat{M}$.
For $\sigma=\{\pm\} \in \widehat{M}$ and $\lambda \in \mathbb{C} \cong \mathfrak{a}^*_\mathbb{C}$, we write $(\pi_{\pm,\lambda}, H^{\pm,\lambda}_\infty)$ for the principal series representations of $G$.
Its restriction to $K$ is the set of Fourier series on $\mathbb{S}^1$ with only non-zero even or odd Fourier coefficients
$$
H^{\pm}_\infty = \{f \in C^\infty(K,\mathbb{C}) \;\vert\; f \text{ even or odd }\}
\stackrel{K}{\cong} \bigoplus_{n \text{ even or odd}} \delta_n.$$
In order to write down the composition series of $H^{\pm,\lambda}_\infty$, we
 will denote an exact, non-splitting module sequence
$$ 0 \longrightarrow A  \longrightarrow B  \longrightarrow C  \longrightarrow 0$$
shortly by a '\textit{boxes-picture}'
\begin{center}
\begin{tikzpicture}[scale=0.5]
\draw (-1,1.5) -- (-1,1.5) node[anchor=north] {$B=$};
\draw (0,0) rectangle (1.5,1) node[pos=.5] {$A$};
\draw (0,1) rectangle (1.5,2)node[pos=.5] {$C$};
\draw (2,1.3) -- (2,1.3) node[anchor=north] {$.$};
\end{tikzpicture}
\end{center}
A proof of the following classical result can be found for example in \cite[ 5.6]{Wallach} or in \cite[Ch. VI]{Lang}. 
Note that the referenced proofs are also valid for $G$-representations of smooth vectors instead of $(\mathfrak{g},K)$-modules, if we apply Casselman's and Wallach's globalization theorem \cite[Prop.~11]{Wallach2}.

\begin{thm}[Structure of principal series representations of $\SL(2,\R)$]  \label{thm:Bargmann} 
The principal series representation $H^{\pm,\lambda}_\infty$ of $\mathbf{SL}(2,\mathbb{R})$ is reducible if and only if
$$
\lambda \in I_\pm := \begin{cases}
\frac{1}{2} +\mathbb{Z}, & \sigma=+,\\
\Z, & \sigma=-.
\end{cases}$$
For $\lambda=\frac{k}{2}\in I_{\pm}, k\in \mathbb{N}$, we have
\begin{center}
\begin{tikzpicture}[scale=0.5]
\draw (-1.7,1.5) -- (-1.7,1.5) node[anchor=north] {$H^{\pm,-\frac{k}{2}}_\infty=$};
\draw (0,0) rectangle (4,1) node[pos=.5] {$F_k$};
\draw (0,1) rectangle (4,2) node[pos=.5] {$D_{-k} \oplus D_{k}$};

\draw (7.5,1.5) -- (7.5,1.5) node[anchor=north] {$H^{\pm,\frac{k}{2}}_\infty=$};

\draw (9,0) rectangle (13,1) node[pos=.5] {$D_{-k} \oplus D_{k}$};
\draw (9,1) rectangle (13,2) node[pos=.5] {$F_k$};
\end{tikzpicture}
\end{center}
where $F_k:=\bigoplus^{k-1}_{l=-(k-1)} \delta_{2l}$ is the finite-dimensional irreducible $\mathbf{SL}(2,\mathbb{R})$-representation of dimension $k$ and $D_{\pm k}$ is the space of vectors of a representation of discrete series, which is characterized by the $K$-type decomposition
$D_{\pm k}= \overline{\bigoplus}_{j\geq0} \delta_{\pm(k+1+2j)}.$
Furthermore, for $\lambda=0$, we have
\begin{center}
\begin{tikzpicture}[scale=0.5]
\draw (-1.7,1.2) -- (-1.7,1.2) node[anchor=north] {$H^{-,0}_\infty=$};
\draw (0,0) rectangle (4,1) node[pos=.5] {$D_- \oplus D_{+}$};
\end{tikzpicture}
\end{center}
where
$D_{\pm}=\overline{\bigoplus}_{j\geq0} \delta_{\pm(1+2j)}$ are the limits of the discrete series. \qed\end{thm}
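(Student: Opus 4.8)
The plan is to reduce everything to the underlying Harish-Chandra module and to control it through the two weight-shifting operators. By the Casselman--Wallach globalization theorem (already invoked just before the statement), the lattice of closed $G$-submodules of $H^{\pm,\lambda}_\infty$ is in order-preserving bijection with the lattice of $(\g,K)$-submodules of its space of $K$-finite vectors, so it suffices to analyse the latter. Since $K=\SO(2)$ is abelian, each $K$-type $\delta_n$ occurring in $H^\pm_\infty$ (the even $n$ for $\sigma=+$, the odd $n$ for $\sigma=-$) occurs with multiplicity one, as recorded in the decomposition $H^\pm_\infty\cong\bigoplus_{n\text{ even/odd}}\delta_n$; I fix a weight vector $v_n$ spanning the $\delta_n$-line. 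A $(\g,K)$-submodule is then completely determined by the set of indices $n$ for which it contains $v_n$.

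First I would complexify $\g$ and diagonalise the $K$-action: writing $W=\begin{pmatrix}0&1\\-1&0\end{pmatrix}$ for the generator of $\kk$ (so $\pi_{\sigma,\lambda}(W)v_n=in\,v_n$) and $h,e,f$ for the standard basis of $\mathfrak{sl}(2,\C)$, the elements $X_\pm=h\pm i(e+f)$ satisfy $[W,X_\pm]=\pm 2i\,X_\pm$, hence $\pi_{\sigma,\lambda}(X_\pm)$ maps $\C v_n$ into $\C v_{n\pm 2}$. The heart of the argument is the explicit computation, in the compact picture, of the scalars $c_\pm(n,\lambda)$ determined by $\pi_{\sigma,\lambda}(X_\pm)v_n=c_\pm(n,\lambda)\,v_{n\pm 2}$; one finds each $c_\pm(n,\lambda)$ affine-linear in $\lambda$ and $n$, of the form $c_\pm(n,\lambda)=\lambda+\rho\pm\tfrac n2$ up to a common normalisation (consistent with the central character, as the Casimir acts by $\lambda^2-\rho^2$ and $[X_+,X_-]$ is proportional to $W$). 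This computation, together with tracking exactly which indices make a coefficient vanish, is where essentially all the analytic work sits.

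Granting the coefficients, the classification becomes combinatorial. If no $c_\pm(n,\lambda)$ vanishes, then from any $v_n$ one reaches every other weight vector by repeatedly applying $X_+$ and $X_-$, so every nonzero submodule is everything and the module is irreducible; this is precisely the case $\lambda\notin I_\pm$. When a coefficient vanishes, a submodule appears as a ``ray'' or ``interval'' of indices closed under both operators. Reading off the zeros with $\rho=\tfrac12$: for $\lambda=-\tfrac k2$ one has $c_+(k-1,\lambda)=0$ and $c_-(-(k-1),\lambda)=0$, so $\{v_n:|n|\le k-1\}$ is a submodule — the $k$-dimensional block $F_k$ — with quotient spanned by the rays $\{n\ge k+1\}$ and $\{n\le -(k+1)\}$, i.e.\ $D_k\oplus D_{-k}$; for $\lambda=+\tfrac k2$ the two rays instead form the submodule and $F_k$ becomes the quotient; and for $\lambda=0$, $\sigma=-$, the rays $\{n\ge1\}$ and $\{n\le-1\}$ are already submodules with no types between them, yielding the direct sum $D_+\oplus D_-$. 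Non-splitting of the two extensions involving $F_k$ is not automatic, and I would verify it from the coefficients: for $\lambda=-\tfrac k2$ one computes $c_-(k+1,\lambda)=-k\neq0$, so $X_-$ carries the bottom vector $v_{k+1}$ of the discrete piece back into $F_k$, which prevents the ray $\{n\ge k+1\}$ from being a submodule of the whole module and hence rules out a splitting.

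Finally I would identify the abstract subquotients with named representations. The finite piece, with $K$-types $\delta_{-(k-1)},\delta_{-(k-3)},\dots,\delta_{k-1}$, is by uniqueness of the irreducible representation with a prescribed highest weight the $k$-dimensional irreducible $\SL(2,\R)$-representation $F_k$. Each $D_{\pm k}$ has $K$-spectrum a single half-line $\delta_{\pm(k+1+2j)}$, $j\ge0$, which is exactly the Blattner $K$-type pattern of a discrete series representation of $\SL(2,\R)$; matching lowest (resp.\ highest) $K$-types identifies them with the discrete series, and the boundary case $\lambda=0$, $\sigma=-$, with the two limits of discrete series $D_\pm$. The main obstacles are thus the explicit coefficient computation of the second step (the one genuinely analytic input) and this last identification, which imports the $K$-type classification of the discrete series.
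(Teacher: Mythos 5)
Your proposal is correct and follows essentially the same route as the paper, which does not prove the theorem itself but refers to the classical $(\g,K)$-module argument in Lang (Ch.~VI) and Wallach (5.6) — namely the multiplicity-one $K$-type decomposition, the explicit action of the weight-shifting operators $X_\pm$ on the $K$-isotypic lines and the resulting vanishing pattern of the coefficients — combined, exactly as you do, with the Casselman--Wallach globalization theorem to transfer the submodule lattice to the smooth representations $H^{\pm,\lambda}_\infty$. Your vanishing loci, the non-splitting check via $c_-(k+1,-\tfrac k2)\neq 0$, and the identification of the subquotients with $F_k$, $D_{\pm k}$ and the limits $D_\pm$ all match the cited classical proof.
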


\begin{req} \label{req:Wsubmodule}
Let $W_\lambda$ be a proper closed invariant $G$-submodule of $H^{\pm,\lambda}_\infty$ for $\lambda \in I_\pm$. Then, Thm.~\ref{thm:Bargmann} tells us that
Then, one can observe that
\begin{itemize}
\item[-] for $\lambda >0$, $W_\lambda \in \{D_{-k}, D_k, D_{-k} \oplus D_{k}\}$, $k=2\lambda$,
\item[-] for $\lambda <0$, $W_\lambda \in \Big\{F_k, 
\begin{tikzpicture}[scale=0.5]
\draw (5,0) rectangle (7,1) node[pos=.5] {$F_k$};
\draw (5,1) rectangle (7,2) node[pos=.5] {$D_{-k}$};
\end{tikzpicture}
\;\;,\;
\begin{tikzpicture}[scale=0.5]
\draw (5,0) rectangle (7,1) node[pos=.5] {$F_k$};
\draw (5,1) rectangle (7,2) node[pos=.5] {$D_{k}$};
\end{tikzpicture}
\Big\}$, $k=2\lambda$,
\item[-] while for $\lambda=0$ and $\sigma=-$, $W_\lambda\in \{D_+,D_-\}.$
\end{itemize}
\end{req}

\noindent
To describe the intertwining conditions for $G=\mathbf{SL}(2,\mathbb{R})$ in the three levels,
we first need some preparation. The Harish-Chandra $\mathbf{c}$-function for $G$ is denoted by $\mathbf{c}_n(\lambda)$ for $n \in \mathbb{Z}$.
Due Cohn \cite[App. 1]{Cohn}, it is given explicitly in terms of gamma function $\Gamma(\cdot)$, by the formula (for a suitable normalization of the Haar measure $d\overline{n}$):
\begin{equation} \label{eq:cnSL(2,R)}
\mathbf{c}_n(\lambda) = \mathbf{c}_{-n}(\lambda) = \frac{1}{\sqrt{\pi}} \frac{\Gamma(\lambda)\Gamma(\lambda+\frac{1}{2})}{\Gamma(\lambda+\frac{1+n}{2})\Gamma(\lambda+\frac{1-n}{2})}, \;\;\; \lambda \in \mathfrak{a}_\mathbb{C}^*.
\end{equation}
Let $n \equiv m$ (mod 2). Using the gamma function recurrence formula 

\begin{equation} \label{eq:Gammarelation}
\Gamma(\lambda+a)=(\lambda+(a-1))\Gamma(\lambda+(a-1)), \;\;\; a\in \mathbb{Z}, \lambda \in \mathfrak{a}_\mathbb{C}^*
\end{equation}
repeatedly, we find the following expression of the quotient of the $\mathbf{c}$-functions:
\begin{eqnarray} \label{eq:cncm}
\frac{\mathbf{c}_n(\lambda)}{\mathbf{c}_m(\lambda)} 
&=& \frac{\Gamma(\lambda+\frac{1+m}{2})\Gamma(\lambda+\frac{1-m}{2})}{\Gamma(\lambda+\frac{1+n}{2})\Gamma(\lambda+\frac{1-n}{2})} \nonumber \\
&=&
\begin{cases}
1, & \text{ for } \vert n\vert=\vert m\vert, \\
\frac{(\lambda-\frac{\vert n\vert-1}{2})(\lambda-\frac{\vert n\vert-3}{2})\cdots(\lambda-\frac{\vert m\vert+1}{2})}{(\lambda+\frac{\vert n\vert-1}{2})(\lambda+\frac{\vert n\vert-3}{2})\cdots(\lambda+\frac{\vert m\vert+1}{2})}, & \text{ for } \vert n\vert>\vert m\vert,\\
\frac{(\lambda+\frac{\vert m\vert-1}{2})(\lambda+\frac{\vert m\vert-3}{2})\cdots(\lambda+\frac{\vert n\vert+1}{2})}{(\lambda-\frac{\vert m\vert-1}{2})(\lambda-\frac{\vert m\vert-3}{2})\cdots(\lambda-\frac{\vert n\vert+1}{2})}, & \text{ for } \vert n\vert<\vert m\vert.
\end{cases}
\end{eqnarray}
Note that the quotient has zeros in $\{\frac{\vert n\vert-1}{2},\frac{\vert n\vert-3}{2},\dots,\frac{\vert m\vert+1}{2}\}$ and poles in\\
$\{-\frac{\vert n\vert-1}{2},-\frac{\vert n\vert-3}{2}, \dots, -\frac{\vert m\vert+1}{2}\}$ for $\vert n\vert>\vert m\vert$, and inversely for $\vert n\vert<\vert m\vert$.
We know by (\ref{eq:normKSintwop3}) that the matrix coefficient of the Knapp-Stein intertwining operator
$ J_{w,\pm,\lambda} : H^{\pm,\lambda}_\infty \longrightarrow H^{\pm,-\lambda}_\infty$
with respect to the Fourier decomposition of $H^{\pm,\lambda}_\infty \cong \bigoplus_{n \text{ even or odd}} \delta_n$ is given by $\mathbf{c}_n(\lambda)$ (up to sign).

\begin{thm}[Intertwining conditions in (\textcolor{blue}{Level 1})] \label{thm:Meta1}
For $r>0$, let $A$ be the space of all
$\phi \in \prod_{\sigma\in \widehat{M}} \emph \Hol( \mathfrak{a}_\mathbb{C}^*,  \emph \End(H^\pm_\infty))$ such that $\phi$ statisfies the corresponding growth condition as well as the two intertwining conditions (\ref{eq:intwcondKS}) and
\begin{itemize}
\item[(D.b')] $\phi$ leaves every proper closed $G$-submodule $W_{\lambda}$ of $H^\pm_\infty$, listed in Remark~\ref{req:Wsubmodule}, invariant.
\end{itemize}
Then, $\mathcal{A}$ satisfies the conditions of Thm.~\ref{thm:Meta}, this means that
$\mathcal{A}=PW_r(G).$
\end{thm}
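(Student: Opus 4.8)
The plan is to verify that the space $\mathcal{A}$ meets every hypothesis of Theorem~\ref{thm:Meta} and then quote that theorem. Linearity is clear, and closedness together with $K\times K$-invariance follow as in \cite{PalmirottaPWS}, since the growth condition and each of the conditions (\ref{eq:intwcondKS}), (D.b') are closed and $K\times K$-stable. By Remark~\ref{req:Wsubmodule}, for $(\Repart(\lambda),\alpha)>0$ the only proper closed submodules $W_\lambda$ are the discrete series pieces $D_{\pm k}$ (and their sum); hence (D.b') contains the discrete series invariance (\ref{eq:intwconddiscrete}), so $\mathcal{A}\subset PW^+_r(G)$ and $\phi(\pi)$ is well defined for every $\phi\in\mathcal{A}$. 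The inclusion $\mathcal{F}_{\sigma,\lambda}(C^\infty_r(G))\subset\mathcal{A}$ is immediate from Delorme's Theorem~\ref{thm:Delorme1}: an element of $PW_r(G)$ satisfies (D.a), which specialises to the Knapp-Stein condition (Example~\ref{exmp:Delormecond}) and, applied to the datum $(\xi,W_\lambda)$ with $s=1$, to (D.b').

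The remaining and essential point is to check (D.b). Fix $\sigma\in\widehat{M}$ and $\phi\in\mathcal{A}$ satisfying (i)--(ii), and fix $\lambda$ with $(\Repart(\lambda),\alpha)>0$. If $H^{\sigma,\lambda}_\infty$ is irreducible, then $J_{w,\sigma,\lambda}$ is an isomorphism, so $m=0$, $H^{\sigma,\lambda}_{\infty,(-1)}=\{0\}$, and the assertion is vacuous. So I would reduce to $\lambda=\tfrac{k}{2}$, a reducibility point of Theorem~\ref{thm:Bargmann} with $k\in\N$.

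First I would determine $m$ and $\Ker(J^{(m-1)}_{w,\sigma,\lambda})$ from the explicit $\cc$-function. Since $J_{w,\sigma,\mu}$ acts on the Fourier mode $\delta_n$ of $H^{\sigma,\mu}_\infty$ by the scalar $\cc_n(\mu)$ (up to sign), formula (\ref{eq:cnSL(2,R)}) shows, via the poles of $\Gamma(\mu+\tfrac{1-n}{2})$, that $\cc_n(\tfrac{k}{2})=0$ exactly for $|n|\geq k+1$ (of the parity fixed by $\sigma$), each such zero being simple, while $\cc_n(\tfrac{k}{2})\neq0$ for $|n|\leq k-1$. A short argument on Fourier expansions then yields $m=1$: if a germ $f_\mu=\sum_n a_n(\mu)\,\delta_n$ has $J_{w,\sigma,\mu}(f_\mu)$ vanishing to order $\geq 2$ at $\mu=\tfrac{k}{2}$, then every $a_n(\tfrac{k}{2})=0$, i.e. $f_\lambda=0$. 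Hence $m-1=0$, $H^{\sigma,\lambda}_{\infty,(0)}=H^{\sigma,\lambda}_\infty$, $J^{(0)}_{w,\sigma,\lambda}=J_{w,\sigma,\lambda}$, and its kernel is the closed span of the modes $|n|\geq k+1$, which by Theorem~\ref{thm:Bargmann} is precisely the discrete series submodule $D_{-k}\oplus D_{k}$.

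Finally I would conclude using (ii). The embeddings $D_{\pm k}\hookrightarrow H^{\sigma,k/2}_\infty$ realise $W_{D_{\pm k}}=D_{\pm k}$, so $\phi(D_{\pm k})=\phi(\sigma,\tfrac{k}{2})\big|_{D_{\pm k}}$. Since $|D_{\pm k}|=k+1>|\sigma|$ in every case ($|\sigma|\in\{0,1\}$ and $k\geq1$ at these points), hypothesis (ii) gives $\phi(D_{\pm k})=0$, whence $\phi(\sigma,\tfrac{k}{2})$ vanishes on $D_{-k}\oplus D_{k}=\Ker(J^{(0)}_{w,\sigma,\lambda})$. This is exactly the conclusion of (D.b); interestingly, (i) is not even needed for $\SL(2,\R)$, because no finite-dimensional constituent enters the kernel for $\Repart(\lambda)>0$. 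With (D.b) verified, Theorem~\ref{thm:Meta} applies and gives $\mathcal{A}=PW_r(G)$. The one genuinely delicate step is the simultaneous reading-off of $m$ and of the kernel from the $\cc$-function, i.e. confirming that all zeros are simple (so $m=1$) and that the vanishing modes assemble precisely into $D_{-k}\oplus D_{k}$; the matching $|\pi|>|\sigma|$ with hypothesis (ii) is then pure bookkeeping.
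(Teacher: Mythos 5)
Your proposal is correct and follows essentially the same route as the paper: verify the hypotheses of Theorem~\ref{thm:Meta}, use the explicit quotients of Harish-Chandra $\cc$-functions (\ref{eq:cnSL(2,R)}) to show that the Knapp-Stein operator has only simple zeros for $\Repart(\lambda)>0$ (so $m\le 1$) and that its kernel is exactly $D_{-k}\oplus D_k$ at the reducibility points, and then invoke hypothesis (ii) via $|D_{\pm k}|=k+1>|\sigma|$. Your write-up is in fact somewhat more explicit than the paper's on the auxiliary points (the inclusions $\mathcal{A}\subset PW^+_r(G)$ and $\mathcal{F}_{\sigma,\lambda}(C^\infty_r(G))\subset\mathcal{A}$, and the mode-by-mode argument pinning down $m=1$), but the underlying argument is the same.
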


\begin{proof}
Note first, that the space $\mathcal{A}$ is $K\times K$-invariant and closed.
We have that $(D.b)$ of Thm.~\ref{thm:Meta} gives a condition for each $\sigma \in \widehat{M}= \{\pm\}$.
Let us first consider $\sigma=\{+\} \in \widehat{M}.$
By Example~\ref{exmp:SL2RM}, we have $\vert+\vert=0$ and $\vert\pi\vert=\vert k \vert+1>0$.
Now let  $\phi \in \mathcal{A}$ satisfying the assumption $(D.b)~(ii)$, i.e., in particular
\begin{equation} \label{eq:Assump}
\phi^{(0)}\Big(+, \frac{k}{2}\Big)\Big\vert_{D_{-k} \oplus D_k}=0, \;\;\; k \in 2\mathbb{Z}+1.
\end{equation}
Let us check that:
\begin{itemize}
\item[(a)] for $\text{Re}(\lambda) >0$, the intertwining operator $J_{-,+,\lambda}$ has zeros of order at most one.
\item[(b)] the kernel of $J_{w,+,\lambda}$ is equal to $0$ or $D_{-k} \oplus D_k$ for $\text{Re}(\lambda) >0$.
\end{itemize}
Consider the $K$-type $n\in 2\mathbb{Z}$ and the Harish-Chandra $\mathbf{c}$-function $\mathbf{c}_n$ as in (\ref{eq:cnSL(2,R)}).
If $n=0$, then $\mathbf{c}_0(\lambda)=\frac{1}{\sqrt{\pi}} \frac{\Gamma(\lambda)}{\Gamma(\lambda+\frac{1}{2})}$ and we see that $\mathbf{c}_0(\lambda)$ has no zeros and no poles for $\text{Re}(\lambda) >0$.
Thus, instead of $\mathbf{c}_n$, we can consider the quotient
$$\frac{\mathbf{c}_n(\lambda)}{\mathbf{c}_0(\lambda)} 
= \frac{\Gamma(\lambda+\frac{1}{2})^2}{\Gamma(\lambda+\frac{1+n}{2})\Gamma(\lambda+\frac{1-n}{2})}
=\frac{(\lambda-\frac{\vert n\vert-1}{2}) \cdots (\lambda-\frac{1}{2})}{(\lambda+\frac{\vert n\vert-1}{2}) \cdots (\lambda+\frac{1}{2})}.$$
It has zeros $\lambda \in \{\frac{1}{2}, \cdots, \frac{\vert n\vert-1}{2}\}$ of first order.
Due to (\ref{eq:normKSintw}), we know that the intertwining operator $J_{w,+,\lambda}$ is in relation with the $\mathbf{c}$-function.
If on all $K$-types, we have zeros of first order, then $J_{w,+,\lambda}$ should also have zeros of first order.
Hence $J_{w,+,\lambda}$ has zeros of at most order one, this proves the first assertation (a) of the claim.

Concerning (b), we need to check for which $K$-type $n$, the quotient $\frac{\mathbf{c}_n(\lambda)}{\mathbf{c}_0(\lambda)}$ has a zero, for fixed $\text{Re}(\lambda) >0$.
It is clear that, if $\lambda \notin I_+$, then $\frac{\mathbf{c}_n(\lambda)}{\mathbf{c}_0(\lambda)}$ has no zeros, i.e. that $\text{Ker}(J_{w,+,\lambda})=0$.
For fixed $\lambda=\frac{k}{2}, k\in 2\mathbb{Z}+1$, the $\mathbf{c}$-quotient $\frac{\mathbf{c}_n(\lambda)}{\mathbf{c}_0(\lambda)}$ has zeros if, and only if, $n$ is a $K$-type of $D_{-k}$ and $D_k$, i.e.
$\text{Ker}(J_{w,+,\lambda})=D_{-k} \oplus D_k$.
Thus, this implies (b).\\

By (b) and (\ref{eq:Assump}), we have that the operator $\phi^{(0)}(+, \lambda)$ annihiliates $\text{Ker}(J_{w,+,\lambda})$ for $\sigma=\{+\} \in \widehat{M}$ and $\text{Re}(\lambda)>0$.
By (a), we have that the order $n$ is equal to one, thus this condition is sufficient.\\
By arguing in a similar way as above for $\sigma=\{-\}\in \widehat{M}$ with $\vert-\vert=1$, and $\pi=D_k \in \widehat{G}_d$, $k \in 2\mathbb{Z}\backslash \{0\}$, with $\vert\pi\vert=\vert k\vert+1$, we can conclude that $\mathcal{A}$ satisfies the condition $(D.b)$ of Thm.~\ref{thm:Meta}.
\end{proof}

Now let us move to (\textcolor{blue}{Level 2}).

\begin{thm}[Intertwining conditions in (\textcolor{blue}{Level 2})] \label{thm:Meta2SL2R}
Let $m\in \mathbb{Z}$ be a $K$-type,
Then, $\psi \in \emph \Hol( \mathfrak{a}^*_\mathbb{C},H^{m\vert_M}_\infty)$ satisfies the intertwining condition (D.2) of Def.~\ref{defn:intcondL2L3} if and only if
\begin{itemize}
\item[(2.a)] $J_{w,m,\lambda} \psi(\lambda, \cdot)=  \mathbf{c}_{m}(\lambda)\psi(-\lambda, \cdot)$ for all $\lambda \in \mathfrak{a}^*_\mathbb{C},$
\item[(2.b)] $\psi(\lambda, \cdot) \in W_\lambda$, $\lambda \in I_\pm$, where $W_\lambda$ is the invariant $G$-submodule of $H^{\pm,\lambda}_\infty$ represented by the blue boxes in Fig.~\ref{fig:BoxSL2R}. Here the choice of $\{\pm\}$ depends on the parity of $m$.
\end{itemize}
\end{thm}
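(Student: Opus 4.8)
The plan is to establish the equivalence by proving the two implications separately, using the explicit composition series of Theorem~\ref{thm:Bargmann}, the list of proper submodules in Remark~\ref{req:Wsubmodule}, and the rank-one reduction of Theorem~\ref{thm:Meta} in its Level-1 incarnation, Theorem~\ref{thm:Meta1}.

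For the implication (D.2) $\Rightarrow$ (2.a)+(2.b), condition (2.a) is simply Proposition~\ref{prop:normKSLevel2}(a) specialised to $\SL(2,\R)$: the non-trivial Weyl element $w$ acts by $\lambda \mapsto -\lambda$, the $K$-type $\tau=\delta_m$ is one-dimensional so $\End_M(E_\tau)\cong\C$ and $\cc_{w^{-1},\tau}(-w\lambda)=\cc_m(\lambda)$, whence (\ref{eq:intwcondLevel2}) reads exactly $J_{w,m,\lambda}\psi(\lambda,\cdot)=\cc_m(\lambda)\psi(-\lambda,\cdot)$. For (2.b) I would feed into (D.2) the single-term data $\xi=((\sigma,\lambda,0))$, with $\sigma\in\{+,-\}$ matching the parity of $m$ and $W=W_\lambda$ one of the proper submodules of Remark~\ref{req:Wsubmodule}. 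Then $D^\tau_{W_\lambda}\subset\Hom_M(E_\tau,E_\sigma)\cong\C$, and by Frobenius reciprocity $\widetilde{Frob}^{-1}(t)$ is the map $v\mapsto\phi_v(t)$, whose image is the $\delta_{\pm m}$-isotypic line of $H^\sigma_\infty$ (one-dimensional, by multiplicity one). Hence $D^\tau_{W_\lambda}\neq 0$ precisely when $W_\lambda$ contains the $K$-type of magnitude $|m|$, and then $t$ is a non-zero scalar, so the requirement $t\circ\psi(\lambda,\cdot)\in W_\lambda$ collapses to $\psi(\lambda,\cdot)\in W_\lambda$. Intersecting over all admissible $W_\lambda$, equivalently taking the smallest submodule containing that $K$-type, which is the blue box of Figure~\ref{fig:BoxSL2R}, yields (2.b).

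The harder direction is the converse. Assuming (2.a) and (2.b), I must verify $t\circ\psi_{\overline{\xi}}\in W$ for every intertwining datum $(\xi,W)$ and every non-zero $t\in D^\tau_W$, including the composite data $s\geq 2$ and derived data $m_i\geq 1$ that (2.a)+(2.b) do not mention. The engine is the reduction already performed at Level~1: Theorem~\ref{thm:Meta1} shows that for $\SL(2,\R)$ the full condition (D.a) follows from the Knapp-Stein condition (\ref{eq:intwcondKS}) together with invariance of every proper submodule $W_\lambda$ (condition (D.b$'$)). I would transport this through the correspondence (D.a)$\leftrightarrow$(D.2) of Definition~\ref{defn:intcondL2L3}, under which the $D^\tau_W$-pairing of the previous paragraph turns submodule-invariance at Level~1 into the membership statement (2.b) and the Knapp-Stein half into (2.a). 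The simplification that makes this clean is the one recorded in the proof of Theorem~\ref{thm:Meta1}: the $\cc$-function (\ref{eq:cnSL(2,R)}) has only simple zeros, so the integer $m$ of (\ref{eq:m}) never exceeds one and no genuinely derived module $H^{\sigma,\lambda}_{\infty,(m-1)}$ with $m-1\geq 1$ ever occurs; moreover $\Ker J_{w,\sigma,\lambda}$ at a reducible $\lambda=k/2>0$ equals the discrete-series submodule $D_{-k}\oplus D_k$, so the kernel/vanishing part of (D.b) is already subsumed by membership in the submodules of Remark~\ref{req:Wsubmodule}.

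The main obstacle I anticipate is precisely this transport step: justifying that (2.a) and (2.b), indexed only by a single $\lambda$ and by proper submodules, force (D.2) on arbitrary composite and derived data. I expect the cleanest route is to appeal to Theorem~\ref{thm:Meta} directly rather than re-running Delorme's induction, after observing that the Level-2 requirement $t\circ\psi_{\overline{\xi}}\in W$ can be tested one $K$-type at a time, where multiplicity one in $H^\sigma_\infty$ removes the bookkeeping and the simple-zero fact excludes any obstruction hidden in a derived layer. I would finish by matching, over the sign of $\Repart(\lambda)$ and the sign and parity of $m$, the smallest admissible $W_\lambda$ to the corresponding blue box in Figure~\ref{fig:BoxSL2R}, confirming that the two formulations coincide.
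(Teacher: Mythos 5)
Your proposal follows essentially the same route as the paper's own proof: condition (2.a) is extracted from Proposition~\ref{prop:normKSLevel2}(a), condition (2.b) is obtained by testing (D.2) against single-term data $(\xi,W_\lambda)$ with $W_\lambda$ a proper submodule from Remark~\ref{req:Wsubmodule}, where $D^m_{W_\lambda}$ is one-dimensional exactly when $W_\lambda$ contains the $K$-type $m$ (so the condition collapses to $\psi(\lambda,\cdot)\in W_\lambda$ and one takes the smallest such submodule, giving the blue boxes), and the sufficiency of (2.a)+(2.b) is obtained by transporting the Level-1 reduction of Theorem~\ref{thm:Meta1} (simple zeros of the $\cc$-function, identification of $\Ker J_{w,\sigma,\lambda}$) through the (D.a)$\leftrightarrow$(D.2) correspondence. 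Your write-up is in fact more explicit than the paper's rather terse argument, but the underlying ideas and the points of reliance on Theorems~\ref{thm:Meta} and~\ref{thm:Meta1} are identical.
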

Notice that, if $W_\lambda$ is the whole colored blue box, then there is no intertwining conditions.

\begin{figure}[h!]
\begin{itemize}
\item for $m=0$:
\end{itemize}
\begin{center}
\begin{tikzpicture}[scale=0.5]
\draw (-1.5,0.5) node[anchor=south] {.};
\draw (-1,0.5) node[anchor=south] {.};
\draw (-0.5,0.5) node[anchor=south] {.};

\filldraw[fill=blue!80!white, draw=black](0,0) rectangle (2,1);
\draw (0,1) rectangle (1,2);
\draw (1,1) rectangle (2,2);

\draw (1,-0.7) -- (1,-0.3)  node[anchor=north] {};
\draw (1,-0.7) -- (1,-0.7)  node[anchor=north] {${-\frac{3}{2}}$};

\filldraw[fill=blue!80!white, draw=black](3,0) rectangle (5,1);
\draw (3,1) rectangle (4,2);
\draw (4,1) rectangle (5,2);

\draw (4,-0.7) -- (4,-0.3) node[anchor=north] {};
\draw (4,-0.7) -- (4,-0.7) node[anchor=north] {${-\frac{1}{2}}$};

\filldraw[fill=blue!80!white, draw=black](6,0) rectangle (7,1);
\filldraw[fill=blue!80!white, draw=black] (7,0) rectangle (8,1);
\filldraw[fill=blue!80!white, draw=black] (6,1) rectangle (8,2);

\draw (7,-0.7) -- (7,-0.3) node[anchor=north] {};
\draw (7,-0.7) -- (7,-0.7) node[anchor=north] {${\frac{1}{2}}$};

\filldraw[fill=blue!80!white, draw=black](9,0) rectangle (10,1);
\filldraw[fill=blue!80!white, draw=black] (10,0) rectangle (11,1);
\filldraw[fill=blue!80!white, draw=black] (9,1) rectangle (11,2);

\draw (10,-0.7) -- (10,-0.3) node[anchor=north] {};
\draw (10,-0.7) -- (10,-0.7) node[anchor=north] {${\frac{3}{2}}$};

\filldraw[fill=blue!80!white, draw=black] (12,0) rectangle (13,1);
\filldraw[fill=blue!80!white, draw=black] (13,0) rectangle (14,1);
\filldraw[fill=blue!80!white, draw=black] (12,1) rectangle (14,2);

\draw (13,-0.7) -- (13,-0.3)  node[anchor=north] {};
\draw (13,-0.7) -- (13,-0.7)  node[anchor=north] {${\frac{5}{2}}$};

\draw (14.5,0.5) node[anchor=south] {.};
\draw (15,0.5) node[anchor=south] {.};
\draw (15.5,0.5) node[anchor=south] {.};

\draw[thick,<->] (-2,-0.5) -- (16,-0.5) node[anchor=north west] {$\lambda$};
\end{tikzpicture}
\end{center}

\begin{itemize}
\item for $m \in 2\Z$:
\end{itemize}
\begin{itemize}
\item[$\cdot$] $m>0$:
\end{itemize}
\begin{center}
\begin{tikzpicture}[scale=0.5]
\draw (-1.5,0.5) node[anchor=south] {.};
\draw (-1,0.5) node[anchor=south] {.};
\draw (-0.5,0.5) node[anchor=south] {.};

\filldraw[fill=blue!80!white, draw=black](0,0) rectangle (2,1);
\draw (0,1) rectangle (1,2);
\draw (1,1) rectangle (2,2);

\draw (1,-0.7) -- (1,-0.3)  node[anchor=north] {};
\draw (1,-0.7) -- (1,-0.7)  node[anchor=north] {${-\frac{m+3}{2}}$};

\filldraw[fill=blue!80!white, draw=black](3,0) rectangle (5,1);
\draw (3,1) rectangle (4,2);
\draw (4,1) rectangle (5,2);

\draw (4,-0.7) -- (4,-0.3) node[anchor=north] {};
\draw (4,-0.7) -- (4,-0.7) node[anchor=north] {${-\frac{m+1}{2}}$};

\filldraw[fill=blue!80!white, draw=black](6,0) rectangle (8,1);
\draw (6,1) rectangle (7,2);
\filldraw[fill=blue!80!white, draw=black] (7,1) rectangle (8,2);

\draw (7,-0.7) -- (7,-0.3) node[anchor=north] {};
\draw (7,-0.7) -- (7,-0.7) node[anchor=north] {${-\frac{m-1}{2}}$};

\draw (8.5,0.5) node[anchor=south] {.};
\draw (9,0.5) node[anchor=south] {.};
\draw (9.5,0.5) node[anchor=south] {.};

\filldraw[fill=blue!80!white, draw=black](10,0) rectangle (12,1);
\draw (10,1) rectangle (11,2);
\filldraw[fill=blue!80!white, draw=black] (11,1) rectangle (12,2);

\draw (11,-0.7) -- (11,-0.3) node[anchor=north] {};
\draw (11,-0.7) -- (11,-0.7) node[anchor=north] {${-\frac{1}{2}}$};

\draw (13,0) rectangle (14,1);
\filldraw[fill=blue!80!white, draw=black] (14,0) rectangle (15,1);
\draw (13,1) rectangle (15,2);

\draw (14,-0.7) -- (14,-0.3)  node[anchor=north] {};
\draw (14,-0.7) -- (14,-0.7)  node[anchor=north] {${\frac{1}{2}}$};

\draw (15.5,0.5) node[anchor=south] {.};
\draw (16,0.5) node[anchor=south] {.};
\draw (16.5,0.5) node[anchor=south] {.};

\draw (17,0) rectangle (18,1);
\filldraw[fill=blue!80!white, draw=black] (18,0) rectangle (19,1);
\draw (17,1) rectangle (19,2);

\draw (18,-0.7) -- (18,-0.3)  node[anchor=north] {};
\draw (18,-0.7) -- (18,-0.7)  node[anchor=north] {${\frac{m-1}{2}}$};

\filldraw[fill=blue!80!white, draw=black] (20,0) rectangle (21,1);
\filldraw[fill=blue!80!white, draw=black] (21,0) rectangle (22,1);
\filldraw[fill=blue!80!white, draw=black] (20,1) rectangle (22,2);

\draw (21,-0.7) -- (21,-0.3)  node[anchor=north] {};
\draw (21,-0.7) -- (21,-0.7)  node[anchor=north] {${\frac{m+1}{2}}$};

\filldraw[fill=blue!80!white, draw=black] (23,0) rectangle (24,1);
\filldraw[fill=blue!80!white, draw=black] (24,0) rectangle (25,1);
\filldraw[fill=blue!80!white, draw=black] (23,1) rectangle (25,2);

\draw (24,-0.7) -- (24,-0.3) node[anchor=north] {};
\draw (24,-0.7) -- (24,-0.7) node[anchor=north] {${\frac{m+3}{2}}$};

\draw (25.5,0.5) node[anchor=south] {.};
\draw (26,0.5) node[anchor=south] {.};
\draw (26.5,0.5) node[anchor=south] {.};

\draw[thick,<->] (-2,-0.5) -- (27,-0.5) node[anchor=north west] {$\lambda$};
\end{tikzpicture}
\end{center}

\begin{itemize}
\item[$\cdot$] $m<0$:
\end{itemize}
\begin{center}
\begin{tikzpicture}[scale=0.5]
\draw (-1.5,0.5) node[anchor=south] {.};
\draw (-1,0.5) node[anchor=south] {.};
\draw (-0.5,0.5) node[anchor=south] {.};

\filldraw[fill=blue!80!white, draw=black](0,0) rectangle (2,1);
\draw (0,1) rectangle (1,2);
\draw (1,1) rectangle (2,2);

\draw (1,-0.7) -- (1,-0.3)  node[anchor=north] {};
\draw (1,-0.7) -- (1,-0.7)  node[anchor=north] {${-\frac{m+3}{2}}$};

\filldraw[fill=blue!80!white, draw=black](3,0) rectangle (5,1);
\draw (3,1) rectangle (4,2);
\draw (4,1) rectangle (5,2);

\draw (4,-0.7) -- (4,-0.3) node[anchor=north] {};
\draw (4,-0.7) -- (4,-0.7) node[anchor=north] {${-\frac{m+1}{2}}$};

\filldraw[fill=blue!80!white, draw=black](6,0) rectangle (8,1);
\filldraw[fill=blue!80!white, draw=black] (6,1) rectangle (7,2);
\draw (7,1) rectangle (8,2);

\draw (7,-0.7) -- (7,-0.3) node[anchor=north] {};
\draw (7,-0.7) -- (7,-0.7) node[anchor=north] {${-\frac{m-1}{2}}$};

\draw (8.5,0.5) node[anchor=south] {.};
\draw (9,0.5) node[anchor=south] {.};
\draw (9.5,0.5) node[anchor=south] {.};

\filldraw[fill=blue!80!white, draw=black](10,0) rectangle (12,1);
\filldraw[fill=blue!80!white, draw=black] (10,1) rectangle (11,2);
\draw (11,1) rectangle (12,2);

\draw (11,-0.7) -- (11,-0.3) node[anchor=north] {};
\draw (11,-0.7) -- (11,-0.7) node[anchor=north] {${-\frac{1}{2}}$};

\filldraw[fill=blue!80!white, draw=black] (13,0) rectangle (14,1);
\draw (14,0) rectangle (15,1);
\draw (13,1) rectangle (15,2);

\draw (14,-0.7) -- (14,-0.3)  node[anchor=north] {};
\draw (14,-0.7) -- (14,-0.7)  node[anchor=north] {${\frac{1}{2}}$};

\draw (15.5,0.5) node[anchor=south] {.};
\draw (16,0.5) node[anchor=south] {.};
\draw (16.5,0.5) node[anchor=south] {.};

\filldraw[fill=blue!80!white, draw=black](17,0) rectangle (18,1);
\draw (18,0) rectangle (19,1);
\draw (17,1) rectangle (19,2);

\draw (18,-0.7) -- (18,-0.3)  node[anchor=north] {};
\draw (18,-0.7) -- (18,-0.7)  node[anchor=north] {${\frac{m-1}{2}}$};

\filldraw[fill=blue!80!white, draw=black] (20,0) rectangle (21,1);
\filldraw[fill=blue!80!white, draw=black] (21,0) rectangle (22,1);
\filldraw[fill=blue!80!white, draw=black] (20,1) rectangle (22,2);

\draw (21,-0.7) -- (21,-0.3)  node[anchor=north] {};
\draw (21,-0.7) -- (21,-0.7)  node[anchor=north] {${\frac{m+1}{2}}$};

\filldraw[fill=blue!80!white, draw=black] (23,0) rectangle (24,1);
\filldraw[fill=blue!80!white, draw=black] (24,0) rectangle (25,1);
\filldraw[fill=blue!80!white, draw=black] (23,1) rectangle (25,2);

\draw (24,-0.7) -- (24,-0.3) node[anchor=north] {};
\draw (24,-0.7) -- (24,-0.7) node[anchor=north] {${\frac{m+3}{2}}$};

\draw (25.5,0.5) node[anchor=south] {.};
\draw (26,0.5) node[anchor=south] {.};
\draw (26.5,0.5) node[anchor=south] {.};

\draw[thick,<->] (-2,-0.5) -- (27,-0.5) node[anchor=north west] {$\lambda$};
\end{tikzpicture}
\end{center}

\begin{itemize}
\item for $m\in 2\Z+1$:
\end{itemize}
\begin{itemize}
\item[$\cdot$] $m>0$:
\end{itemize}

\begin{center}
\begin{tikzpicture}[scale=0.5]
\draw (-1.5,0.5) node[anchor=south] {.};
\draw (-1,0.5) node[anchor=south] {.};
\draw (-0.5,0.5) node[anchor=south] {.};

\filldraw[fill=blue!80!white, draw=black](0,0) rectangle (2,1);
\draw (0,1) rectangle (1,2);
\draw (1,1) rectangle (2,2);

\draw (1,-0.7) -- (1,-0.3)  node[anchor=north] {};
\draw (1,-0.7) -- (1,-0.7)  node[anchor=north] {${-\frac{m+3}{2}}$};

\filldraw[fill=blue!80!white, draw=black](3,0) rectangle (5,1);
\draw (3,1) rectangle (4,2);
\draw (4,1) rectangle (5,2);

\draw (4,-0.7) -- (4,-0.3) node[anchor=north] {};
\draw (4,-0.7) -- (4,-0.7) node[anchor=north] {${-\frac{m+1}{2}}$};

\filldraw[fill=blue!80!white, draw=black](6,0) rectangle (8,1);
						   \draw (6,1) rectangle (7,2);
\filldraw[fill=blue!80!white, draw=black] (7,1) rectangle (8,2);

\draw (7,-0.7) -- (7,-0.3) node[anchor=north] {};
\draw (7,-0.7) -- (7,-0.7) node[anchor=north] {${-\frac{m-1}{2}}$};

\draw (8.5,0.5) node[anchor=south] {.};
\draw (9,0.5) node[anchor=south] {.};
\draw (9.5,0.5) node[anchor=south] {.};

\filldraw[fill=blue!80!white, draw=black](10,0) rectangle (12,1);
\draw (10,1) rectangle (11,2);
\filldraw[fill=blue!80!white, draw=black] (11,1) rectangle (12,2);

\draw (11,-0.7) -- (11,-0.3) node[anchor=north] {};
\draw (11,-0.7) -- (11,-0.7) node[anchor=north] {$-1$};

\draw (13,0) rectangle (14,1);
\filldraw[fill=blue!80!white, draw=black] (14,0) rectangle (15,1);

\draw (14,-0.7) -- (14,-0.3) node[anchor=north] {};
\draw (14,-0.7) -- (14,-0.7) node[anchor=north] {$0$};

\draw (16,0) rectangle (17,1);
\filldraw[fill=blue!80!white, draw=black] (17,0) rectangle (18,1);
\draw (16,1) rectangle (18,2);

\draw (17,-0.7) -- (17,-0.3)  node[anchor=north] {};
\draw (17,-0.7) -- (17,-0.7)  node[anchor=north] {$1$};

\draw (18.5,0.5) node[anchor=south] {.};
\draw (19,0.5) node[anchor=south] {.};
\draw (19.5,0.5) node[anchor=south] {.};

\draw (20,0) rectangle (21,1);
\filldraw[fill=blue!80!white, draw=black] (21,0) rectangle (22,1);
\draw (20,1) rectangle (22,2);

\draw (21,-0.7) -- (21,-0.3)  node[anchor=north] {};
\draw (21,-0.7) -- (21,-0.7)  node[anchor=north] {${\frac{m-1}{2}}$};

\filldraw[fill=blue!80!white, draw=black] (23,0) rectangle (24,1);
\filldraw[fill=blue!80!white, draw=black] (24,0) rectangle (25,1);
\filldraw[fill=blue!80!white, draw=black] (23,1) rectangle (25,2);

\draw (24,-0.7) -- (24,-0.3)  node[anchor=north] {};
\draw (24,-0.7) -- (24,-0.7)  node[anchor=north] {${\frac{m+1}{2}}$};

\filldraw[fill=blue!80!white, draw=black] (26,0) rectangle (27,1);
\filldraw[fill=blue!80!white, draw=black] (27,0) rectangle (28,1);
\filldraw[fill=blue!80!white, draw=black] (26,1) rectangle (28,2);

\draw (27,-0.7) -- (27,-0.3) node[anchor=north] {};
\draw (27,-0.7) -- (27,-0.7) node[anchor=north] {${\frac{m+3}{2}}$};

\draw (28.5,0.5) node[anchor=south] {.};
\draw (29,0.5) node[anchor=south] {.};
\draw (29.5,0.5) node[anchor=south] {.};

\draw[thick,<->] (-2,-0.5) -- (30,-0.5) node[anchor=north west] {$\lambda$};
\end{tikzpicture}
\end{center}


\begin{itemize}
\item[$\cdot$] $m<0$:
\end{itemize}

\begin{center}
\begin{tikzpicture}[scale=0.5]
\draw (-1.5,0.5) node[anchor=south] {.};
\draw (-1,0.5) node[anchor=south] {.};
\draw (-0.5,0.5) node[anchor=south] {.};

\filldraw[fill=blue!80!white, draw=black](0,0) rectangle (2,1);
\draw (0,1) rectangle (1,2);
\draw (1,1) rectangle (2,2);

\draw (1,-0.7) -- (1,-0.3)  node[anchor=north] {};
\draw (1,-0.7) -- (1,-0.7)  node[anchor=north] {${-\frac{m+3}{2}}$};

\filldraw[fill=blue!80!white, draw=black](3,0) rectangle (5,1);
\draw (3,1) rectangle (4,2);
\draw (4,1) rectangle (5,2);

\draw (4,-0.7) -- (4,-0.3) node[anchor=north] {};
\draw (4,-0.7) -- (4,-0.7) node[anchor=north] {${-\frac{m+1}{2}}$};

\filldraw[fill=blue!80!white, draw=black](6,0) rectangle (8,1);
\filldraw[fill=blue!80!white, draw=black] (6,1) rectangle (7,2);
\draw (7,1) rectangle (8,2);

\draw (7,-0.7) -- (7,-0.3) node[anchor=north] {};
\draw (7,-0.7) -- (7,-0.7) node[anchor=north] {${-\frac{m-1}{2}}$};

\draw (8.5,0.5) node[anchor=south] {.};
\draw (9,0.5) node[anchor=south] {.};
\draw (9.5,0.5) node[anchor=south] {.};

\filldraw[fill=blue!80!white, draw=black](10,0) rectangle (12,1);
\filldraw[fill=blue!80!white, draw=black] (10,1) rectangle (11,2);
\draw (11,1) rectangle (12,2);

\draw (11,-0.7) -- (11,-0.3) node[anchor=north] {};
\draw (11,-0.7) -- (11,-0.7) node[anchor=north] {$-1$};

\filldraw[fill=blue!80!white, draw=black] (13,0) rectangle (14,1);
\draw (14,0) rectangle (15,1);

\draw (14,-0.7) -- (14,-0.3) node[anchor=north] {};
\draw (14,-0.7) -- (14,-0.7) node[anchor=north] {$0$};

\filldraw[fill=blue!80!white, draw=black] (16,0) rectangle (17,1);
\draw (17,0) rectangle (18,1);
\draw (16,1) rectangle (18,2);

\draw (17,-0.7) -- (17,-0.3)  node[anchor=north] {};
\draw (17,-0.7) -- (17,-0.7)  node[anchor=north] {$1$};

\draw (18.5,0.5) node[anchor=south] {.};
\draw (19,0.5) node[anchor=south] {.};
\draw (19.5,0.5) node[anchor=south] {.};

\filldraw[fill=blue!80!white, draw=black] (20,0) rectangle (21,1);
\draw (21,0) rectangle (22,1);
\draw (20,1) rectangle (22,2);

\draw (21,-0.7) -- (21,-0.3)  node[anchor=north] {};
\draw (21,-0.7) -- (21,-0.7)  node[anchor=north] {${\frac{m-1}{2}}$};

\filldraw[fill=blue!80!white, draw=black] (23,0) rectangle (24,1);
\filldraw[fill=blue!80!white, draw=black] (24,0) rectangle (25,1);
\filldraw[fill=blue!80!white, draw=black] (23,1) rectangle (25,2);

\draw (24,-0.7) -- (24,-0.3)  node[anchor=north] {};
\draw (24,-0.7) -- (24,-0.7)  node[anchor=north] {${\frac{m+1}{2}}$};

\filldraw[fill=blue!80!white, draw=black] (26,0) rectangle (27,1);
\filldraw[fill=blue!80!white, draw=black] (27,0) rectangle (28,1);
\filldraw[fill=blue!80!white, draw=black] (26,1) rectangle (28,2);

\draw (27,-0.7) -- (27,-0.3) node[anchor=north] {};
\draw (27,-0.7) -- (27,-0.7) node[anchor=north] {${\frac{m+3}{2}}$};

\draw (28.5,0.5) node[anchor=south] {.};
\draw (29,0.5) node[anchor=south] {.};
\draw (29.5,0.5) node[anchor=south] {.};

\draw[thick,<->] (-2,-0.5) -- (30,-0.5) node[anchor=north west] {$\lambda$};
\end{tikzpicture}
\end{center}
\caption{Boxes-pictures for $G=\SL(2,\R)$.}
\label{fig:BoxSL2R}
\end{figure}

\begin{proof}
We need to show that the conditions $(2.a)$ and $(2.b)$ correspond to the condition $(D.2)$ in Def.~\ref{defn:intcondL2L3}.
In fact, by Prop.~\ref{prop:normKSLevel2}(a), we have that $(2.a)$ is a special case of $(D.2)$.

Concerning $(2.b)$, condition $(D.2)$ says that for each $W$ we have an intertwining condition corresponding to $(D.b')$ in Thm.~\ref{thm:Meta1}.
Now we need to extract in which of these $W_\lambda$, there is an intertwining condition.
If the $K$-type $m$ is in a closed $G$-submodule $W_\lambda$ of $H^{\pm,\lambda}_\infty$, then $D^m_W$ is one-dimensional. 
Hence, by $(D.2)$, $\psi$ has values in this $G$-submodule $W_\lambda$. 
By $(D.b')$ in Thm.~\ref{thm:Meta1}, we thus take the smallest closed proper invariant $G$-submodule of them. 
Otherwise, if the $K$-type is not in a closed $G$-submodule $W_\lambda$ of $H^{\pm,\lambda}_\infty$, then $D^m_{W_\lambda} =\{0\}$ and thus there are no intertwining conditions.
Consequently, we obtain the boxes-pictures in Fig.~\ref{fig:BoxSL2R}.
\end{proof}

The final step will be to move to (\textcolor{blue}{Level 3}). 
Note that $\text{Hom}_M(E_n,E_m) = \{0\}$, if $n \equiv m$ (mod 2).

\begin{defn} \label{defn:qnm}
Let $n \equiv m$ (mod 2). 
We define the polynomial $q_{n,m}$ in $\lambda \in \mathfrak{a}^*_\mathbb{C}$ with values in $\emph \Hom_M(E_n,E_m) \cong \mathbb{C}$ by
\begin{equation} \label{eq:qnm}
q_{n,m}(\lambda):= \begin{cases}
1, & \text{ if } n=m\\
(\lambda + \frac{\vert m \vert+1}{2})(\lambda + \frac{\vert m \vert+3}{2}) \cdots (\lambda + \frac{\vert n \vert-1}{2}), & \text{ if } \vert n \vert>\vert m \vert \text{ and same signs}\\
(\lambda - \frac{\vert n \vert+1}{2})(\lambda - \frac{\vert n \vert+3}{2})  \cdots (\lambda - \frac{\vert m \vert-1}{2}), & \text{ if } \vert n \vert<\vert m \vert \text{ and same signs}\\
(\lambda + \frac{\vert n \vert-1}{2})(\lambda + \frac{\vert n \vert-3}{2})  \cdots (\lambda - \frac{\vert m \vert-1}{2}), & \text{ else, with different signs.}
\end{cases}
\end{equation}
\end{defn}

\begin{thm}[Intertwining conditions in (\textcolor{blue}{Level 3})] \label{thm:Meta3SL2R}
Let $n \equiv m$ (mod 2) be two $K$-types. 
Then, $\varphi \in \emph \Hol(\mathfrak{a}^*_\mathbb{C},\emph \Hom_M(E_n,E_m))$ satisfies the intertwining condition (D.3) of Def.~\ref{defn:intcondL2L3} if, and only if, there exists an even holomorphic function $h\in \emph \Hol(\lambda^2)$ such that 

\begin{equation} \label{eq:intcondLevel3}
\varphi(\lambda)= h(\lambda) \cdot q_{n,m}(\lambda), \;\;\; \lambda \in \mathfrak{a}^*_\mathbb{C},
\end{equation}
where
$q_{n,m}$ is the polynomial (\ref{eq:qnm}).
\end{thm}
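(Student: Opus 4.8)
The plan is to characterise (D.3) through the two families of conditions that, by the reduction in Theorem~\ref{thm:Meta}, already exhaust Delorme's condition in rank one: the Knapp--Stein functional equation coming from Proposition~\ref{prop:normKSLevel2}(b), and the divisibility forced by the reducible submodules listed in Remark~\ref{req:Wsubmodule}. Concretely, I would set $h(\lambda):=\varphi(\lambda)/q_{n,m}(\lambda)$, a priori only meromorphic, and aim to show that (D.3) is equivalent to ``$h$ is even and holomorphic'', i.e.\ $h\in\Hol(\lambda^2)$.

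First I would unwind the Knapp--Stein condition. Since $\widehat{K}\cong\Z$, the Weyl group is $W_A=\{1,w\}$ with $w\lambda=-\lambda$, and both $E_n,E_m$ are one-dimensional, so \eqref{eq:normKSintwop3} collapses to the scalar functional equation
\begin{equation*}
\varphi(\lambda)\,\delta_n(m_w^{-1})\,\cc_n(\lambda)=\delta_m(m_w^{-1})\,\cc_m(\lambda)\,\varphi(-\lambda),
\end{equation*}
using $w=w^{-1}$, $-w\lambda=\lambda$ and $\cc_{w,\delta_n}(\lambda)=\cc_n(\lambda)$. Choosing the representative $m_w=k_{\pi/2}$ gives $\delta_n(m_w^{-1})=(-i)^n$, so the prefactor is $i^{\,n-m}$. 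Using the explicit quotient \eqref{eq:cncm} together with the recurrence \eqref{eq:Gammarelation}, I would identify $\cc_m(\lambda)/\cc_n(\lambda)$ with a ratio of products of linear factors and match it against $q_{n,m}(\lambda)/q_{n,m}(-\lambda)$ read off from \eqref{eq:qnm}. The identity to establish, case by case according to the signs of $n$ and $m$, is
\begin{equation*}
\frac{\delta_m(m_w^{-1})}{\delta_n(m_w^{-1})}\cdot\frac{\cc_m(\lambda)}{\cc_n(\lambda)}=\frac{q_{n,m}(\lambda)}{q_{n,m}(-\lambda)},
\end{equation*}
the signs $i^{\,n-m}$ and $(-1)^{(|n|-|m|)/2}$ conspiring—precisely because of the normalisation $J_{w,\tau,\lambda}=\tau(m_w)J_{w,\tau|_M,\lambda}$—to produce $+1$. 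Granting this, the functional equation reads $h(\lambda)=h(-\lambda)$, so $h$ is even.

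The second, and conceptually harder, step is to extract the divisibility of $\varphi$ by $q_{n,m}$ from the remaining intertwining data, so that $h$ is in fact holomorphic. Here I would read off from \eqref{eq:qnm} that the zeros of $q_{n,m}$ are simple and sit exactly at those reducibility points $\lambda_0\in I_\pm$ at which the domain $K$-type $\delta_n$ and the target $K$-type $\delta_m$ lie on opposite sides of the composition series of $H^{\pm,\lambda_0}_\infty$ from Theorem~\ref{thm:Bargmann} (one inside a proper submodule $W_{\lambda_0}$ of Remark~\ref{req:Wsubmodule}, the other only in the quotient). At such a point the Level 3 requirement $t\circ\varphi_{\overline{\xi}}\in D^\gamma_W$ of Definition~\ref{defn:intcondL2L3}, applied to $W=W_{\lambda_0}$ and the generator $t$ of the one-dimensional space $D^{\delta_m}_{W}$, forces $\varphi(\lambda_0)=0$; conversely, when $\delta_n,\delta_m$ lie on the same side there is no constraint. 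Matching this list of forced zeros with the factors in \eqref{eq:qnm}, and invoking that $J_{w,\sigma,\lambda}$ has zeros of order at most one (Theorem~\ref{thm:Meta1}) so that no higher-order vanishing is demanded, shows that $\varphi/q_{n,m}$ has only removable singularities, whence $h\in\Hol(\aL^*_\C)$; being even, it is a holomorphic function of $\lambda^2$.

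Finally the converse is obtained by running the computation backwards: if $\varphi=h\cdot q_{n,m}$ with $h$ even and holomorphic, the boxed identity yields the Knapp--Stein condition \eqref{eq:normKSintwop3}, while the explicit zeros of $q_{n,m}$ guarantee the vanishing demanded by every submodule datum, so all instances of (D.3) hold. I expect the main obstacle to be the bookkeeping in the divisibility step—correctly determining, for each reducibility point and each parity, which pairs $(\delta_n,\delta_m)$ are separated by the submodule structure—and, as a secondary technical point, pinning down the signs in the boxed identity uniformly over the same-sign and opposite-sign cases. A useful cross-check is that this divisibility is the Level 3 shadow of the boxes-pictures of Theorem~\ref{thm:Meta2SL2R}, transported through the Frobenius reciprocity $\Hom_K(E_n,H^{m|_M}_\infty)\cong\Hom_M(E_n,E_m)$.
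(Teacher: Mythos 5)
Your proposal is correct and follows essentially the same route as the paper: the Knapp--Stein functional equation of Proposition~\ref{prop:normKSLevel2}(b) combined with the identity $q_{n,m}(-\lambda)/q_{n,m}(\lambda)=(-1)^{(m-n)/2}\cc_n(\lambda)/\cc_m(\lambda)$ yields the evenness of $h=\varphi/q_{n,m}$, while the submodule data --- which the paper reads off from the boxes-pictures of Theorem~\ref{thm:Meta2SL2R} and you read off directly from the one-dimensionality of $D^{\delta_m}_{W}$ and the vanishing of $D^{\delta_n}_{W}$ --- force simple zeros of $\varphi$ exactly at the zeros of $q_{n,m}$, giving holomorphy of $h$. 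The bookkeeping you defer (matching forced zeros to the factors of \eqref{eq:qnm}) is precisely what the paper leaves to the reader as well.
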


\begin{proof}
By Thm.~\ref{thm:Meta2SL2R}, it is sufficient to prove that the conditions $(2.a)$ and $(2.b)$ correspond to (\ref{eq:intcondLevel3}).
In particular, we want to show that $(\lambda,k_\theta) \mapsto \varphi(\lambda)e^{in\theta}$ satisfies $(2.b)$ if, and only if, $\varphi$ has zeros at the zeros of the polynomial $q_{n,m}$.
From Thm.~\ref{thm:Meta2SL2R} $(2.b)$, we know that the invariant $G$-submodules $W_\lambda$ are represented by the boxes-pictures in Fig.~\ref{fig:BoxSL2R}.
Thus, we need to check, where the $K$-type $n$ is not in the colored blue invariant $G$-submodule $W_\lambda$.
We leave it to the reader to check that this happens exactly at the zeros of $q_{n,m}$.
Thus, we can deduce that $\varphi$ is of the form (\ref{eq:intcondLevel3}) with $h$ an arbitrary holomorphic function.

Concerning the correspondence between the conditions $(2.a)$ and (\ref{eq:intcondLevel3}), by Prop.~\ref{prop:normKSLevel2}(b), we observe that  $(2.a)$ corresponds to $(D.3)$:
\begin{equation} \label{eq:cnmqnm}
(-1)^{(m-n)/2}\frac{\mathbf{c}_n(\lambda)}{\mathbf{c}_m(\lambda)} \varphi(\lambda)=\varphi(-\lambda), \;\; \lambda \in \mathfrak{a}^*_\mathbb{C}, n,m \in \mathbb{Z}.
\end{equation}
By using Def.~\ref{defn:qnm}, we observe that
$\frac{q_{n,m}(-\lambda)}{q_{n,m}(\lambda)}= (-1)^{(m-n)/2}\frac{\mathbf{c}_n(\lambda)}{\mathbf{c}_m(\lambda)},$ for $\lambda \in \mathfrak{a}^*_\mathbb{C}, n,m \in \mathbb{Z}.$
Hence, we obtain
$$\frac{\varphi(\lambda)}{q_{n,m}(\lambda)}=\frac{\varphi(-\lambda)}{q_{n,m}(-\lambda)}.$$
This means that (\ref{eq:cnmqnm}) is satisfied if and only if $h(\lambda)=h(-\lambda)$ for $\lambda \in \mathfrak{a}^*_\mathbb{C}$.
\end{proof}

We now have completely determined the Paley-Wiener-(Schwartz) spaces for $G=\mathbf{SL}(2,\mathbb{R})$ in (\textcolor{blue}{Level 2}) and (\textcolor{blue}{Level 3}).


\subsection*{The case $G=\SL(2,\R)\times \SL(2,\R)$} \label{sect:SL4R}
Now let
$$G:=G'\times G' = \mathbf{SL}(2,\mathbb{R})\times \mathbf{SL}(2,\mathbb{R}).$$ 
Since $K'=\mathbf{SO}(2)$ is a maximal compact subgroup of $G'$, $K:=K'\times K'$ is maximal compact in $G$.

The irreducible representations of $K$ are given by pairs $(n_1,n_2), n_i \in \widehat{K}' \cong \mathbb{Z}$ \cite[Sect. 2.36]{Wallach1}.
More precisely, we denote by a tuple of integers 
$n:=(n_1,n_2) \in \mathbb{Z}\times\mathbb{Z} \cong \mathbb{Z}^2 \cong \widehat{K}$
the $K$-representions on the vector space $E_n:=E_{n_1} \otimes E_{n_2} \cong \mathbb{C}$ with action $[n_1,n_2](k_1,k_2)=n_1(k_1) \otimes n_2(k_2)$.
The associated homogeneous line bundle over $X:=X' \times X'$ is denoted by
$\mathbb{E}_n$.
For $l,n \in \widehat{K}$, we observe that $\text{Hom}_M(E_l,E_n) = \{0\}$, if $l_1 \not\equiv n_1$ (mod 2) or $l_2 \not\equiv n_2$ (mod 2).
Note that $\mathfrak{a}^*_\mathbb{C} \cong  \mathbb{C} \times \mathbb{C}$.
By using Def.~\ref{defn:qnm}, we define for $l,n \in \mathbb{Z}^2$
\begin{equation} \label{eq:ln}
l_1 \equiv n_1 \text{ (mod 2)}, \; l_2 \equiv n_2\text{ (mod 2)},
\end{equation}
the polynomial $q_{l,n}$ given by
\begin{equation} \label{eq:qnmdecomp}
q_{l,n}(\lambda_1,\lambda_2):=q_{l_1,n_1}(\lambda_1)\cdot q_{l_2,n_2}(\lambda_2), \;\;\;\; (\lambda_1,\lambda_2) \in \mathbb{C} \times \mathbb{C} \cong \mathfrak{a}^*_\mathbb{C},
\end{equation}
where $q_{l_i,n_i}, i=1,2,$ is the 'intertwining' polynomial (\ref{eq:qnm}).

\begin{thm}[Intertwining condition in (\textcolor{blue}{Level 3})] \label{thm:Meta3SL4R}
Let $l,n \in \mathbb{Z}^2$ be two tuples of integers satisfying (\ref{eq:ln}).
Then, $\varphi \in \emph \Hol(\mathfrak{a}^*_\mathbb{C},\emph \Hom_M(E_l,E_n))$ satisfies the intertwining condition (D.3) of Def.~\ref{defn:intcondL2L3} if and only if there exists an holomorphic function $h\in \emph \Hol(\lambda_1^2,\lambda_2^2)$, i.e.
$h(\lambda_1,\lambda_2)=h(-\lambda_1,\lambda_2)=h(\lambda_1,-\lambda_2),$
such that 
\begin{equation} \label{eq:compcondLevel3}
\varphi(\lambda_1,\lambda_2):=h(\lambda_1,\lambda_2) \cdot q_{l,n}(\lambda_1,\lambda_2).
\end{equation}
\end{thm}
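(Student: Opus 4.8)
The plan is to reduce everything to the one--factor result Thm~\ref{thm:Meta3SL2R} by exploiting that all the data entering condition $(D.3)$ factor through the product structure $G=G'\times G'$. Concretely, I would first record the relevant factorizations: $\widehat{K}\cong\widehat{K'}\times\widehat{K'}$, $\Hom_M(E_l,E_n)\cong\Hom_{M'}(E_{l_1},E_{n_1})\otimes\Hom_{M'}(E_{l_2},E_{n_2})\cong\C$, $\aL^*_\C\cong\C\times\C$, the Weyl group $W_A\cong\{\pm1\}\times\{\pm1\}$ generated by $w_1=(-1,1)$ and $w_2=(1,-1)$, the principal series $H^{(\sigma_1,\sigma_2),(\lambda_1,\lambda_2)}_\infty\cong H^{\sigma_1,\lambda_1}_\infty\,\widehat{\otimes}\,H^{\sigma_2,\lambda_2}_\infty$, and --- since the defining integral runs over $\overline{N}=\overline{N}'\times\overline{N}'$ --- the product formula $\cc_{(l_1,l_2)}(\lambda_1,\lambda_2)=\cc_{l_1}(\lambda_1)\,\cc_{l_2}(\lambda_2)$ for the Harish-Chandra $\cc$-function, with $J_{w_i}$ acting only on the $i$-th tensor factor.

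For necessity I would argue by slicing. Fixing a generic $\lambda_2^0\in\C$ away from the reducibility set $I_\pm$, the second factor is an irreducible principal series, so the only intertwining data living over $\lambda_2=\lambda_2^0$ come from the first-factor Knapp-Stein operator $J_{w_1}$ and the first-factor submodules of Thm~\ref{thm:Bargmann}. These are exactly the data of the one-factor problem in the variable $\lambda_1$, so Thm~\ref{thm:Meta3SL2R} applies and yields $\varphi(\lambda_1,\lambda_2^0)=h_1(\lambda_1)\,q_{l_1,n_1}(\lambda_1)$ with $h_1$ even. In particular $\varphi(\cdot,\lambda_2^0)$ vanishes at the simple zeros of $q_{l_1,n_1}$; letting $\lambda_2^0$ vary over the generic set and invoking the Riemann removable-singularity theorem, I would conclude that $q_{l_1,n_1}(\lambda_1)$ divides $\varphi$ holomorphically, and symmetrically that $q_{l_2,n_2}(\lambda_2)$ does as well. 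Since the two polynomials involve disjoint variables, $h:=\varphi/q_{l,n}$ is entire. Its evenness in each variable then follows by feeding the product $\cc$-formula into the Knapp-Stein condition of Prop.~\ref{prop:normKSLevel2}(b) for $w_1$ and $w_2$ separately, exactly as in the derivation of $(\ref{eq:cnmqnm})$.

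For sufficiency I would conversely take $\varphi=h\,q_{l,n}$ with $h$ even in each variable and verify $(D.3)$ for every intertwining datum $(\xi,W)$ of $G$. The Knapp-Stein conditions for $w_1,w_2$ are immediate by reversing the computation above. For the submodule conditions I would reduce to tensor-product submodules, checking that the vanishing orders encoded by divisibility by $q_{l,n}$ force $t\circ\varphi_{\overline{\xi}}$ into $D^\gamma_W$ for each relevant $W$. The main obstacle is precisely here, at the doubly reducible points $(\lambda_1^0,\lambda_2^0)$ with $\lambda_i^0\in I_\pm$: one must control the lattice of closed $G$-submodules of $H^{\sigma_1,\lambda_1^0}_\infty\,\widehat{\otimes}\,H^{\sigma_2,\lambda_2^0}_\infty$ and show that no condition beyond divisibility and evenness is produced. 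I expect this to follow from the fact that, the constituents being multiplicity-free, every such submodule relevant to $(D.3)$ is generated by tensor products of the one-factor submodules listed in Remark~\ref{req:Wsubmodule}, so that $D^\tau_W$ and the requirement $t\circ\varphi_{\overline{\xi}}\in D^\gamma_W$ decompose as a tensor product of the one-factor data already handled by Thm~\ref{thm:Meta3SL2R}. Assembling the two directions then yields the characterization $(\ref{eq:compcondLevel3})$.
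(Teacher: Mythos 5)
Your necessity argument is workable in spirit, though you should justify why the slice $\lambda_1\mapsto\varphi(\lambda_1,\lambda_2^0)$ actually satisfies the one-factor condition $(D.3)$: this requires exhibiting, for each one-factor intertwining datum $(\xi',W')$, a product-group datum (for instance $W'\,\widehat{\otimes}\,H^{\sigma_2,\lambda_2^0}_\infty$) whose condition restricts to the one-factor condition on the slice. The paper sidesteps this entirely: it writes $\varphi=\prescript{}{l}{\mathcal{F}}_{n}(f)$ via the Paley--Wiener theorem and applies Fubini to the Fourier integral, so that the partial transform $\tilde{f}_{\lambda_2}$ is literally a compactly supported section on $G'$, whose transform satisfies the one-factor intertwining conditions automatically by Thm.~\ref{thm:Meta3SL2R}.

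The genuine gap is in your sufficiency direction, precisely where you flag ``the main obstacle''. You propose to verify $(D.3)$ for every intertwining datum of $G=G'\times G'$ by arguing that every relevant closed submodule $W$ is built from tensor products of the one-factor submodules of Remark~\ref{req:Wsubmodule}. That claim is unjustified and in general false: the data $(\xi,W)$ range over arbitrary proper closed invariant subspaces of arbitrary finite sums of derived representations, which include graphs of intertwining operators mixing the two factors, submodules of $H^{\sigma_1,\lambda_1}_\infty\,\widehat{\otimes}\,H^{\sigma_2,\lambda_2}_\infty$ at doubly reducible points that are sums of tensor products rather than tensor products (such as $A\otimes H+H\otimes B$), and derived data with $m_i>0$ imposing conditions on mixed partial derivatives. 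Making this direct verification rigorous would essentially amount to redoing Delorme's analysis for the product group. The paper avoids all of it with an approximation argument that is the missing idea here: by Lemma~\ref{lem:densitySL4R} one approximates $h$ locally uniformly by bi-even polynomials; each monomial term $\lambda_1^{\alpha_1}\lambda_2^{\alpha_2}q_{l,n}$ is the Fourier transform of a tensor product $f_1\otimes f_2$ of compactly supported distributions on $G'$ (using Thm.~\ref{thm:Meta3SL2R} and the Paley--Wiener--Schwartz Thm.~\ref{thm:PWsect} in each factor), hence lies in $\prescript{}{l}{PWS}_{n}(\aL^*_\C)$ and therefore satisfies every instance of $(D.3)$ for free; finally, $(D.3)$ is a closed condition under locally uniform convergence, so the limit $\varphi=h\cdot q_{l,n}$ satisfies it as well. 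Without this density step, or an equally powerful substitute, your sufficiency direction does not close.
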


To prove Thm.~\ref{thm:Meta3SL4R}, we first need a density argument, which permits us to approximate the even holomorphic function $h$ in (\ref{eq:compcondLevel3}) by even polynomials.

\begin{lem} \label{lem:densitySL4R}
Consider the subset $P$ of polynomial functions
in $\emph \Hol(\lambda_1^2,\lambda_2^2)$.
Then, $P\subset \emph \Hol(\lambda_1^2,\lambda_2^2)$ is dense with respect to uniform convergence  on compact subsets of $\mathbb{C}^2$.
\end{lem}

\begin{proof}
Let $h(\lambda_1,\lambda_2) \in \text{Hol}(\lambda_1^2,\lambda_2^2)$.
Consider the Taylor series 
at the point $0=(0,0)$ in two variables $(\lambda_1,\lambda_2)\in \mathfrak{a}^*_\mathbb{C}$:
$$\sum_{\alpha} a_\alpha \lambda^\alpha =\sum_{\alpha_1,\alpha_2} a_{\alpha_1,\alpha_2} \lambda_1^{\alpha_1}\lambda_2^{\alpha_2},$$
where $a_\alpha$ are constants and the sum runs over multi-indices $\alpha=(\alpha_1,\alpha_2), \alpha_j\in \mathbb{N}_0$. 
Let $\vert\alpha\vert= \alpha_1 + \alpha_2$.
Note that $a_{\alpha_1,\alpha_2}=0$, if $\alpha_1$ or $\alpha_2$ is odd.
Thus the Taylor polynomials $\sum_{\vert\alpha\vert \leq k} a_\alpha \lambda^\alpha$
belong to $P$.
The Taylor polynomials converge locally uniformly to $h$ for $k$ going to infinity. 
\end{proof}

Next, by using the Iwasawa decomposition of $g=(g_1,g_2) \in G$, the 'exponential' function $e^{n}_{\lambda,k}$ can be rewritten as follows.

\begin{prop} \label{prop:expdecomp}
For fixed $\lambda=(\lambda_1,\lambda_2) \in \mathfrak{a}^*_\mathbb{C}$ and $k=(k_1,k_2) \in K$, the function $e^{n}_{\lambda,k} \in C^\infty(G)$ defined as in Def.~\ref{defn:FTsect}, is a product of the corresponding functions on $G'$:
$$e^{n}_{\lambda,k}(g_1,g_2)=e^{n_1}_{\lambda_1,k_1}(g_1) \cdot e^{n_2}_{\lambda_2,k_2}(g_2), \;\;\;\; (g_1,g_2) \in G.$$
\end{prop}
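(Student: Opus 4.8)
The plan is to exploit the fact that every structure map entering the definition of $e^{n}_{\lambda,k}$ in (\ref{eq:exptau}) respects the direct product decomposition $G=G'\times G'$. First I would record that, since $K=K'\times K'$, $A=A'\times A'$ and $N=N'\times N'$, the Iwasawa decomposition of $G$ is taken componentwise: writing $g_i=\kappa(g_i)a(g_i)n(g_i)$ in $G'$ for $i=1,2$, the element $g=(g_1,g_2)$ factors as $g=(\kappa(g_1),\kappa(g_2))(a(g_1),a(g_2))(n(g_1),n(g_2))$, and by uniqueness of the $KAN$ factorization this \emph{is} the Iwasawa decomposition of $g$. Hence $\kappa(g)=(\kappa(g_1),\kappa(g_2))$, $a(g)=(a(g_1),a(g_2))$ and $n(g)=(n(g_1),n(g_2))$.

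Applying this to $g^{-1}k=(g_1^{-1}k_1,g_2^{-1}k_2)$ I would obtain $\kappa(g^{-1}k)=(\kappa(g_1^{-1}k_1),\kappa(g_2^{-1}k_2))$ and $a(g^{-1}k)=(a(g_1^{-1}k_1),a(g_2^{-1}k_2))$. It then remains to split the two factors of (\ref{eq:exptau}) separately. For the compact part, recall that $n=(n_1,n_2)$ acts on $E_n=E_{n_1}\otimes E_{n_2}$ by $[n_1,n_2](k_1,k_2)=n_1(k_1)\otimes n_2(k_2)$; since each $E_{n_i}$ is one-dimensional, this tensor product collapses to the product of two scalars, so $n(\kappa(g^{-1}k))^{-1}=n_1(\kappa(g_1^{-1}k_1))^{-1}\,n_2(\kappa(g_2^{-1}k_2))^{-1}$.

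For the abelian part, under the identification $\aL^*_\C\cong\C\times\C$ the positive restricted roots of $(\g,\aL)$ are exactly those of the two factors placed on the respective summands, so $\rho=(\rho_1,\rho_2)$ and $\lambda=(\lambda_1,\lambda_2)$. Consequently the character $a\mapsto a^{-(\lambda+\rho)}$ of $A=A'\times A'$ is multiplicative across the two factors, giving $a(g^{-1}k)^{-(\lambda+\rho)}=a(g_1^{-1}k_1)^{-(\lambda_1+\rho_1)}\,a(g_2^{-1}k_2)^{-(\lambda_2+\rho_2)}$. Multiplying the compact and abelian contributions and comparing with the definition of $e^{n_i}_{\lambda_i,k_i}$ on $G'$ yields the claimed identity $e^{n}_{\lambda,k}(g_1,g_2)=e^{n_1}_{\lambda_1,k_1}(g_1)\,e^{n_2}_{\lambda_2,k_2}(g_2)$.

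This argument is essentially bookkeeping and I do not anticipate any serious obstacle. The only points that require care are the conventions fixing the identification $\aL^*_\C\cong\C\times\C$ together with the splitting $\rho=(\rho_1,\rho_2)$, and the observation that the tensor-product $K$-representation on the one-dimensional spaces $E_{n_i}$ reduces to ordinary multiplication of scalars; once these are in place, the factorization follows immediately from the componentwise Iwasawa decomposition.
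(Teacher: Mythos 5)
Your argument is correct and follows essentially the same route as the paper: both proofs rest on the observation that the Iwasawa decomposition of $G=G'\times G'$ is taken componentwise, so that the character $a\mapsto a^{-(\lambda+\rho)}$ and the tensor-product $K$-action on the one-dimensional spaces $E_{n_1}\otimes E_{n_2}$ each split into a product of the two factors. If anything, your version is slightly cleaner in that you apply the decomposition directly to $g^{-1}k$ as the definition (\ref{eq:exptau}) requires, whereas the paper's computation passes through an intermediate evaluation at $k$; the content is identical.
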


\begin{proof}
Consider the Iwasawa decomposition of 
$$g=(g_1,g_2)=(n'_1a_1k'_1,n'_2a_2k'_2)=n'ak' \in G$$ so that $n'=(n'_1,n'_2) \in N, a=(a_1,a_2) \in A$ and $k'=(k'_1,k'_2) \in K.$
One can easily deduce that for $\lambda=(\lambda_1,\lambda_2) \in \mathfrak{a}^*_\mathbb{C}$, we get
$$e^{(\lambda+\rho)\log(a)}=e^{(\lambda_1+\rho)\log(a_1)+(\lambda_2+\rho)\log(a_2)}=a_1^{\lambda_1+\rho}\cdot a_2^{\lambda_2+\rho}.$$
Hence, for $g\in G$, we then have
\begin{eqnarray*}
e^{n}_{\lambda,k}(g)=e^{n}_{\lambda,k}(g_1,g_2)
= a_1^{\lambda_1+\rho}a_2^{\lambda_2+\rho} e^n_{\lambda,1}(k_1,k_2) 
&=& a_1^{\lambda_1+\rho}a_2^{\lambda_2+\rho} n_1(k_1)n_2(k_2) \\
&=& a_1^{\lambda_1+\rho}a_2^{\lambda_2+\rho} e^{n_1}_{\lambda_1,1}(k_1)e^{n_2}_{\lambda_2,1}(k_2) \\
&\stackrel{(\ref{eq:exptau})}{=}& e^{n_1}_{\lambda_1,k_1}(g_1) \cdot e^{n_2}_{\lambda_2,k_2}(g_2).
\end{eqnarray*}
\end{proof}

Let $\prescript{}{l}{PWS}_{n,H}(\mathfrak{a}^*_\mathbb{C}) = \{ \varphi \in \text{Hol}(\mathfrak{a}^*_\mathbb{C}, \text{Hom}_M(E_l,E_n)) \;\vert\; \varphi$ satisfies $(D.3)\}$ be the 'pre'-Paley-Wiener-Schwartz space.
Note that 
$$\prescript{}{l}{PW}_{n}(\mathfrak{a}^*_\mathbb{C}) \subset \prescript{}{l}{PWS}_{n}(\mathfrak{a}^*_\mathbb{C})  \subset \prescript{}{l}{PWS}_{n,H}(\mathfrak{a}^*_\mathbb{C}).$$

\begin{proof}[Proof of Thm.~\ref{thm:Meta3SL4R}]
It suffices to show that
\begin{itemize}
\item[(a)] every function $\varphi \in \prescript{}{l}{PW}_{n}(\mathfrak{a}^*_\mathbb{C})$ is of the form (\ref{eq:compcondLevel3}) and
\item[(b)] (inversely) if $\varphi$ is of the form (\ref{eq:compcondLevel3}), then it is in $\prescript{}{l}{PWS}_{n,H}(\mathfrak{a}^*_\mathbb{C})$.
\end{itemize}
Let $\varphi \in \prescript{}{l}{PW}_{n}(\mathfrak{a}^*_\mathbb{C})$.
By the Paley-Wiener Thm.~\ref{thm:PWsect}, there exists $f\in C^{\infty}_c(G,l,n)$ with
$\prescript{}{l}{\mathcal{F}}_{n} (f)=\varphi$.
By Fubini's theorem and Prop.~\ref{prop:expdecomp}, we have
\begin{eqnarray} \label{eq:FTn1}
\prescript{}{l}{\mathcal{F}}_{n} (f)(\lambda) = \int_{G'} e^{n_1}_{\lambda_1,1}(g_1) \Big( \int_{G'} e^{n_2}_{\lambda_2,1}(g_2) f(g_1,g_2) \;d g_2 \Big) \; d g_1 
&=& \int_{G'} e^{n_1}_{\lambda_1,1}(g_1) \tilde{f}_{\lambda_2}(g_1) \;d g_1 \nonumber \\
&=&\prescript{}{l_1}{\mathcal{F}}_{n_1} \tilde{f}_{\lambda_2}(\lambda_1),
\end{eqnarray}
where we set $\tilde{f}_{\lambda_2}(g_1) :=  \int_{G'} e^{n_2}_{\lambda_2,1}(g_2) f(g_1,g_2) \;d{g_2}.$
Note that $\tilde{f}_{\lambda_2} \in C^\infty_c(X',\mathbb{E}_{n_1}).$
Similarly, if we fix $\lambda_1 \in \mathfrak{a}^*_\mathbb{C}$, we have $\tilde{f}_{\lambda_1}(g_2) :=  \int_{G'} e^{n_1}_{\lambda_1,k_1}(g_1) f(g_1,g_2) \;d{g_1} \in  C^\infty_c(X',\mathbb{E}_{n_2})$
and
$\prescript{}{l}{\mathcal{F}}_{n} (f)(\lambda)=\prescript{}{l_2}{\mathcal{F}}_{n_2} \tilde{f}_{\lambda_1}(\lambda_2)$.
Thus, by Thm.~\ref{thm:Meta3SL2R}, the Fourier transform has the form
\begin{equation} \label{eq:FTn1n2}
\prescript{}{l}{\mathcal{F}}_{n} (f)(\lambda) = h_{\lambda_2}(\lambda_1) \cdot q_{l_1,n_1}= h_{\lambda_1}(\lambda_2) \cdot q_{l_2,n_2}(\lambda_2),
\end{equation}
where $h_{\lambda_i}$ is an even holomorphic function in 
$\lambda_i\in \mathfrak{a}^*_\mathbb{C}$.
In view of (\ref{eq:qnmdecomp}), we deduce that there exists $h \in \text{Hol}(\lambda_1^2, \lambda_2^2)$ such that
$$\varphi(\lambda)=\prescript{}{l}{\mathcal{F}}_{n}f(\lambda) = h(\lambda_1,\lambda_2) \cdot q_{l,n}(\lambda_1,\lambda_2), \;\;\; \lambda=(\lambda_1,\lambda_2) \in \mathfrak{a}^*_\mathbb{C},$$
as desired.

Concerning (b), let $\varphi$ of the form (\ref{eq:compcondLevel3}).
Then, for $(\lambda_1, \lambda_2) \in \mathfrak{a}^*_\mathbb{C}$
$$\varphi(\lambda_1,\lambda_2)=h(\lambda_1,\lambda_2) q_{l_1,n_1}(\lambda_1) q_{l_2,n_2}(\lambda_2).$$
By Lem.~\ref{lem:densitySL4R}, we can approximate $h$ by linear combinations of products of  monomials of the form $\lambda_1^{\alpha_1}$ and $\lambda_2^{\alpha_2}$, where $\alpha_1$ and $\alpha_2$ are even.
By Thm.~\ref{thm:Meta3SL2R} and the Paley-Wiener-Schwartz Thm.~\ref{thm:PWsect},
$$
\lambda_i^{\alpha_i} q_{l_i,n_i}(\lambda_i)= \prescript{}{l_i}{\mathcal{F}}_{n_i}(f_i)(\lambda_i) 
\in \prescript{}{l_i}{PWS}_{n_i}(\mathfrak{a}^*_\mathbb{C})
$$
for some distribution $f_i\in C^{-\infty}_c(G',l_i,n_i), i=1,2$.
Consider now the tensor product of these two distributions:
$f_1 \otimes f_2 \in C^{-\infty}_c(G,l,n).$
By using the computations involving Fubini's theorem from the beginning of the proof, we obtain that
$$\prescript{}{l}{\mathcal{F}}_n (f_1 \otimes f_2)(\lambda)= \prescript{}{l_1}{\mathcal{F}}_{n_1} (f_1)(\lambda_1) \cdot \prescript{}{l_2}{\mathcal{F}}_{n_2} (f_2)(\lambda_2) = \lambda^\alpha q_{l,n}(\lambda).$$
Hence $\lambda^\alpha q_{l,n} \in \prescript{}{l}{PWS}_{n}(\mathfrak{a}^*_\mathbb{C})\subset \prescript{}{l}{PWS}_{n,H}(\mathfrak{a}^*_\mathbb{C})$.
Since $\prescript{}{l}{PWS}_{n,H}(\mathfrak{a}^*_\mathbb{C}) \subset \text{Hol}(\mathfrak{a}^*_\mathbb{C})$ is closed with respect to uniform convergence on compact subsets, we conclude that $\varphi=h\cdot  q_{l,n} \in \prescript{}{l}{PWS}_{n,H}(\mathfrak{a}^*_\mathbb{C}).$
\end{proof}

By Thm.~\ref{thm:Meta3SL4R}, we have explicitly determined the Paley-Wiener(-Schwartz) spaces for $G=\mathbf{SL}(2,\mathbb{R}) \times \mathbf{SL}(2,\mathbb{R})$ in (\textcolor{blue}{Level 3}).\\
Moreover, all the previous results can be generalised to
$G=\mathbf{SL}(2,\mathbb{R})^d, d\geq 2$.

\section{The case $G=\SL(2,\C)$} \label{sect:SL2C}

Let 
$G=\mathbf{SL}(2,\mathbb{C})=\{g \in \mathbf{GL}(2,\mathbb{C}) \;\vert\; \det(g)=1\}$
 be the special linear group of $\mathbb{C}^2$ with maximal compact subgroup
$$K=\mathbf{SU}(2)=\Big\{ \begin{pmatrix} \alpha & \beta \\ - \overline{\beta} & \overline{\alpha} \end{pmatrix} \in  \mathbf{GL}(2,\mathbb{C}) \; \Big\vert \; \vert\alpha\vert^2+\vert\beta\vert^2=1 \Big\}.$$
Note that $K$ is homeomorphic to the 3-sphere and therefore simply connected. 
Furthermore, we can take $A \subset \mathbf{SL}(2,\mathbb{R}) \subset G$ as in Sect.~\ref{sect:SL2R} and
$$N:=\Big\{ \begin{pmatrix}1 & t \\ 0 & 1 \end{pmatrix} \;\Big\vert\; t \in \mathbb{C} \Big\}.$$
\noindent
We identify
$\mathfrak{a}^*_\mathbb{C} \cong \mathbb{C}$ by sending $\lambda$ to $\lambda(H)$, where
$H=
\begin{pmatrix} 1&0 \\ 0&-1 \end{pmatrix} 
\in \mathfrak{a}$.
Note that $\rho(H)=2.$
For the irreducible complex representations of $K$ we have (e.g. \cite[Sect. 5.7]{Wallach})
$$\widehat{K}=\{\delta_n \;\vert\; n \in \mathbb{N}_0\} \cong \mathbb{N}_0 \text{ with } d_{\delta_n}:=\dim(\delta_n)=n+1.$$
The tensor product of two irreducible $K$-representations decomposes into irreducibles according to the classical Clebsch-Gordan rule (e.g. \cite[5.7.1 (1)]{Wallach}):
\begin{equation} \label{eq:CGrule}
\delta_n \otimes \delta_m = \bigoplus_{0 \leq j \leq \min(n,m)} \delta_{n+m-2j}, \;\;\;\, n,m \in  \mathbb{N}_0.
\end{equation}
In addition,
$M=Z_K(A)=\Big\{m_\theta:= \begin{pmatrix} e^{i\theta} & 0 \\ 0 & e^{-i\theta} \end{pmatrix} \;\Big\vert\; \theta \in \mathbb{R} \Big\}$
 is abelian and a maximal torus in $K$.
We parametrize $\widehat{M}:=\{\sigma_l \;\vert\; l \in \mathbb{Z}\} \cong \mathbb{Z}$ by the integers with
$\sigma_l(m_\theta)=e^{il\theta} \in \mathbf{U}(1)$.
Moreover, let $\chi_n:M \rightarrow \mathbb{C}$ denote the character of the finite-dimensional irreducible representation $(\delta_n,E_n)$ of $K$.
Then, the Weyl character formula for $m_\theta \in M$ (e.g. \cite[Chap.~V.6]{Knapp1})
\begin{eqnarray*}
\chi_n(m_\theta)
= \frac{e^{i(n+1)\theta}-e^{-i(n+1)\theta}}{e^{i\theta}-e^{-i\theta}}=\frac{\sin((n+1)\theta)}{\sin(\theta)} 
=e^{-in\theta} + e^{-i(n-2)\theta} + \dots+ e^{in\theta}
\end{eqnarray*}
tells us that the weights of $(\delta_n,E_n)$ have the form $-n,-(n-2), \dots, n-2,n$, each with multiplicity one.
The following important result describes the reducibility of the principal series representations of $\mathbf{SL}(2,\mathbb{C})$. We refer for example to Wallach's book \cite[Sect. 5.7]{Wallach} for a proof.
Note that Wallach's proof is also valid for $G$-representation on smooth vectors (see remark before Thm.~\ref{thm:Bargmann}).

\begin{thm}[Structure of principal series representations of $\SL(2,\C)$] \label{thm:WallachSL2C}
The principal series representations $H^{\sigma,\lambda}_\infty$ of $\mathbf{SL}(2,\mathbb{C})$ is reducible if and only if $\lambda$ is real and
$$\vert\lambda\vert>\vert\sigma\vert,\; \vert\lambda\vert-\vert\sigma\vert \text{ even integer}.
$$
In this case, for $\lambda>0$, there is a unique irreducible subrepresentation $R^{\sigma,\lambda}$ of each $H^{\sigma,\lambda}_\infty$.
Then, we have
\begin{center}
\begin{tikzpicture}[scale=0.5]
\draw (-1.7,1.5) -- (-1.7,1.5) node[anchor=north] {$H^{-\sigma,-\lambda}_\infty=$};
\draw (0.5,0) rectangle (4.5,1) node[pos=.5] {$F_{m,n}$};
\draw (0.5,1) rectangle (4.5,2) node[pos=.5] {$R^{\sigma,\lambda}$};

\draw (7.5,1.5) -- (7.5,1.5) node[anchor=north] {$H^{\sigma,\lambda}_\infty=$};

\draw (9.5,0) rectangle (13.5,1) node[pos=.5] {$R^{\sigma,\lambda}$};
\draw (9.5,1) rectangle (13.5,2) node[pos=.5] {$F_{m,n}$};
\end{tikzpicture}
\end{center}
where $m=\frac{\sigma+\lambda}{2}-1, n=\frac{\lambda-\sigma}{2}-1$
and 
$F_{m,n}$ is an irreducible finite-dimensional $G$-representation that is isomorphic to $\delta_m \otimes \delta_n$ as a $K$-representation.
Moreover, there is an intertwining operator
$$L_{\sigma,\lambda}: H^{-\lambda,-\sigma}_\infty \longrightarrow H^{\sigma, \lambda}_\infty$$
so that $\emph \Ker(J_{w,\sigma,\lambda}) = \emph \Impart(L_{\sigma,\lambda})=R^{\sigma,\lambda}$.
In particular $R^{\sigma,\lambda}$ is isomorphic to $H^{-\lambda,-\sigma}_\infty$.
Here, $J_{w,\sigma,\lambda}: H^{\sigma,\lambda} \longrightarrow H^{-\sigma,-\lambda}$ denotes the Knapp-Stein intertwining operator defined in Def.~\ref{def:KSintop} with $w=-1$ and $m_w=\begin{pmatrix} 0 & -1 \\ 1 & 0 \end{pmatrix}.$
\end{thm}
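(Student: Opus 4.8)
The plan is to determine the full $(\g,K)$-module structure of $H^{\sigma,\lambda}_\infty$ by a $K$-type ladder analysis and then to read off reducibility, the finite-dimensional constituent $F_{m,n}$, and the intertwining operators. The first step is the $K$-type decomposition. By Frobenius reciprocity the multiplicity of $\delta_n\in\widehat{K}$ in $H^{\sigma,\lambda}_\infty$ equals $\dim\Hom_M(E_\sigma,E_{\delta_n})$, i.e.\ the multiplicity of the weight $\sigma$ in $\delta_n$. The Weyl character formula recalled above shows this multiplicity is $1$ when $n\geq|\sigma|$ and $n\equiv\sigma\pmod 2$, and $0$ otherwise. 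Hence, independently of $\lambda$,
\[
H^{\sigma,\lambda}_\infty \stackrel{K}{\cong} \overline{\bigoplus}_{\substack{n\geq|\sigma|\\ n\equiv\sigma\,(2)}}\delta_n,
\]
each $K$-type occurring exactly once; the $K$-types form a single linear ladder $\delta_{|\sigma|},\delta_{|\sigma|+2},\dots$.

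Next I would analyse how $\g$ moves along this ladder. Writing $\g=\kk\oplus\mathfrak{p}$ for the Cartan decomposition, $\mathfrak{p}_\C$ is the $3$-dimensional adjoint-type $K$-module, so $\mathfrak{p}_\C\cong\delta_2$. By the Clebsch--Gordan rule (\ref{eq:CGrule}) with $m=2$, the action of $\mathfrak{p}_\C$ sends the $\delta_n$-isotypic component into $\delta_{n-2}\oplus\delta_n\oplus\delta_{n+2}$. Since every $K$-type has multiplicity one, Schur's lemma forces each component of this action to be a scalar multiple of the canonical $K$-projection; in particular there are scalars $t^{\pm}_n(\sigma,\lambda)$ describing the raising and lowering maps $\delta_n\to\delta_{n\pm 2}$. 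A $(\g,K)$-submodule is then exactly a $K$-isotypic subspace closed under these maps, so the submodule lattice is governed entirely by the vanishing pattern of the $t^{\pm}_n$ along the ladder.

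The decisive step is to compute these scalars. The most economical route is the Casimir element $\Omega$, which acts on $H^{\sigma,\lambda}_\infty$ by the scalar fixed by the infinitesimal character, while $\Omega_\kk$ acts on the $\delta_n$-isotypic by the known Casimir eigenvalue of $\delta_n$. Equating the two expressions yields a closed formula for the product $t^{+}_{n-2}\,t^{-}_{n}$ as a quadratic in $n$ with $(\sigma,\lambda)$-dependent coefficients; one then reads off its zeros, which lie on the ladder precisely when $\lambda$ is real, $|\lambda|>|\sigma|$ and $|\lambda|-|\sigma|\in 2\Z$, with the break located at $n_0=|\lambda|$. This gives both directions of the criterion: if the arithmetic conditions fail, no link degenerates and $H^{\sigma,\lambda}_\infty$ is irreducible; if they hold, exactly one of the two scalars joining $\delta_{|\lambda|-2}$ and $\delta_{|\lambda|}$ vanishes, and a short direct computation of the individual lowering/raising coefficient shows that the sign of $\Repart(\lambda)$ decides which one. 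For $\lambda>0$ it is $t^{-}_{|\lambda|}$ that vanishes, so $R^{\sigma,\lambda}:=\overline{\bigoplus}_{n\geq\lambda}\delta_n$ is a proper closed submodule with finite-dimensional quotient; for $H^{-\sigma,-\lambda}_\infty$ the ascending scalar vanishes instead and the finite piece becomes the submodule. This reproduces exactly the two boxes-pictures.

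Finally I would identify the constituents and the intertwiners. The finite piece $F_{m,n}$ carries the $K$-types $\delta_{|\sigma|},\delta_{|\sigma|+2},\dots,\delta_{\lambda-2}$, which by (\ref{eq:CGrule}) is precisely the $K$-spectrum of $\delta_m\otimes\delta_n$ with $m+n=\lambda-2$ and $|m-n|=|\sigma|$, i.e.\ $m=\tfrac{\sigma+\lambda}{2}-1$ and $n=\tfrac{\lambda-\sigma}{2}-1$; as $\SL(2,\C)$ has a unique irreducible finite-dimensional representation with this highest weight, $F_{m,n}$ must be it. For the intertwiners, the Knapp--Stein operator $J_{w,\sigma,\lambda}$ of Def.~\ref{def:KSintop} acts on each $\delta_n$-isotypic by a scalar, namely a value of the $\cc$-function of Def.~\ref{def:cfct}; its kernel is the span of the $K$-types where this scalar vanishes, which in the reducible case is exactly the tail $R^{\sigma,\lambda}$. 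Composing with the Knapp--Stein operator for the swapped parameters yields $L_{\sigma,\lambda}\colon H^{-\lambda,-\sigma}_\infty\to H^{\sigma,\lambda}_\infty$ whose image is again the $R$-ladder, and matching $K$-types (multiplicity one) gives $\Ker(J_{w,\sigma,\lambda})=\Impart(L_{\sigma,\lambda})=R^{\sigma,\lambda}\cong H^{-\lambda,-\sigma}_\infty$. The step I expect to be hardest is the explicit Casimir computation of the transition scalars and the bookkeeping needed to pin their zeros to $n=|\lambda|$ and to decide, via $\Repart(\lambda)$, which of $t^{+}$ or $t^{-}$ degenerates; once the $\cc$-function values on each $K$-type are recorded, the identification of $F_{m,n}$ and of $J$ and $L$ is essentially formal.
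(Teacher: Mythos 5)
Your proposal is correct and follows essentially the same route as the paper, which does not prove this theorem itself but refers to Wallach (\emph{Real Reductive Groups I}, Sect.~5.7), whose argument is precisely this multiplicity-one $K$-type ladder analysis: Frobenius reciprocity for the $K$-spectrum, $\mathfrak{p}_\C\cong\delta_2$ acting by transition scalars controlled through the Casimir, and identification of the constituents and intertwiners by matching $K$-types. Two small points to tighten: the passage from the $(\g,K)$-module statement to \emph{closed} $G$-submodules of the smooth principal series requires the Casselman--Wallach globalization theorem, which the paper flags explicitly before Thm.~\ref{thm:Bargmann}; and $L_{\sigma,\lambda}$ is not obtained by ``composing with the Knapp--Stein operator for the swapped parameters'' (the Weyl group is only $\{\pm 1\}$, so there is no such operator) but is the \v{Z}elobenko operator, whose existence your own multiplicity-one $K$-type matching argument nevertheless delivers.
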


Furthermore, from the intertwining operator
$L_{\sigma,\lambda}$, we can deduce the existence of further intertwining operators:
\begin{center}
\begin{tikzpicture}
\foreach \point in {(0,-2),(-2,0),(0,2),(2,0)}{\draw[black,fill=black] \point circle (1pt);};
 \node[anchor=south, black] at (0,2) {$H^{\lambda, \sigma}_\infty$};
\node[anchor=east, black] at (-2,0) {$H^{-\sigma,-\lambda}_\infty$};
\node[anchor=west, black] at (2,0) {$H^{\sigma,\lambda}_\infty$};
 \node[anchor=north, black] at (0,-2) {$H^{-\lambda,-\sigma}_\infty$};
\node[anchor=west, black] at (-2.5,1.5) {$\tilde{L}_{\sigma,\lambda}$};
\node[anchor=west, blue] at (1,-1.3) {$L_{\sigma,\lambda}$};
\node[anchor=west, black] at (1,1.3) {$L'_{\sigma,\lambda}$};
\node[anchor=west, black] at (-2.7,-1.2) {$\tilde{L}'_{\sigma,\lambda}$};
\node[anchor=south, black] at (-0.9,0) {$J_{-,\sigma,\lambda}$};
\node[anchor=west, black] at (0,-0.8) {$J_{-,\sigma,\lambda}$};
\draw[thin, black,->] (-2,0) -- (0,2); 
\draw[thin, blue,->] (0,-2) -- (2,0); 
\draw[thin, dashed, gray, ->] (-2,0) -- (0,-2); 
\draw[thin, dashed, gray, ->] (0,2) -- (2,0); 
\draw[thin, black,->] (0,2) -- (0,-2); 
\draw[thin, black,->] (2,0) -- (-2,0); 
\end{tikzpicture}
\end{center}

Note that the operators $L_{\sigma,\lambda}, \tilde{L}_{\sigma,\lambda}, L'_{\sigma,\lambda}$ and $\tilde{L}'_{\sigma,\lambda}$ are precisely the Zelobenko operators (also called BGG-operators) for $G=\mathbf{SL}(2,\mathbb{C})$ \cite{Zelo}, \cite{BGG1}, \cite{BGG2}.
In Fig.~\ref{fig:HSL(2,C)}, we illustrate the principal series representations (with regular integral infinitesimal character) in a grid, where
the horizontal axis represents the values of $\lambda \in \mathfrak{a}^*_\mathbb{C}$ and the vertical one the values of $\sigma \in \widehat{M}$. 
Note that inside the region $\{\pm \vert\sigma\vert > \lambda\}$, we have the irreducible principal series representations $H_\infty^{-\lambda,-\sigma}$ respectively $H_\infty^{\lambda, \sigma}$ colored in gray and outside the reducible ones, colored in black.
\begin{figure}[h!]
\begin{center}
\begin{tikzpicture}[scale=0.72]

\draw[thin,->] (-8,-5) -- (-8,-3) node[right] {$\sigma$}; 
\draw[thin,->] (-8,-5) -- (-6,-5) node[above] {$\lambda$}; 
\draw[thin,dashed,gray,-] (-5.5,-5.5) -- (5.5,5.5)  node[above] {$\sigma=\lambda$}; 
\draw[thin,dashed,gray,-] (5.5,-5.5) -- (-5.5,5.5)  node[above] {$\sigma=-\lambda$}; 

\foreach \point in {(2,0),(4,0),(3,1),(5,1),(4,2),(5,3),(3,-1),(5,-1),(4,-2),(5,-3)}{
    \fill \point circle (2pt);}
\foreach \point in {(-2,0),(-4,0),(-3,-1),(-5,-1),(-4,-2),(-5,-3),(-3,1),(-5,1),(-4,2),(-5,3)}{
   \fill \point circle (2pt);}
\foreach \point in {(0,2),(-1,3),(1,3),(0,4),(-2,4),(2,4),(-3,5),(-1,5),(1,5),(3,5)}{
   \draw[gray,fill=gray] \point circle (2pt);}
\foreach \point in {(0,-2),(1,-3),(-1,-3),(0,-4),(2,-4),(-2,-4),(3,-5),(1,-5),(-1,-5),(-3,-5)}{
   \draw[gray,fill=gray] \point circle (2pt);}

\foreach \point in {(4,-2),(-2,4),(-4,2),(2,-4)}{\draw[orange,fill=orange] \point circle (2pt);}

\foreach \point in {(0,-2),(-2,0),(0,2),(2,0)}{\draw[blue,fill=blue] \point circle (2pt);}

\foreach \point in {(-1,-3),(-3,-1),(3,1),(1,3)}{\draw[green,fill=green] \point circle (2pt);}
 \node[anchor=south, blue] at (0,-2) {$H^{-2,0}_\infty$};
 \node[anchor=south, gray] at (0,-4) {$H^{-4,0}_\infty$};
 \node[anchor=south, green] at (-1,-3) {$H^{-3,-1}_\infty$};
 \node[anchor=south, gray] at (1,-3) {$H^{-3,1}_\infty$};
 \node[anchor=south, gray] at (-2,-4) {$H^{-4,-2}_\infty$};
 \node[anchor=south, orange] at (2,-4) {$H^{-4,2}_\infty$};

 \node[anchor=south, blue] at (0,2) {$H^{2,0}_\infty$};
 \node[anchor=south, gray] at (0,4) {$H^{4,0}_\infty$};
 \node[anchor=south, green] at (1,3) {$H^{3,1}_\infty$};
 \node[anchor=south, gray] at (-1,3) {$H^{-3,1}_\infty$};
 \node[anchor=south, gray] at (2,4) {$H^{4,2}_\infty$};
 \node[anchor=south, orange] at (-2,4) {$H^{4,-2}_\infty$};

\node[anchor=south, blue] at (-2,0) {$H^{0,-2}_\infty$};
\node[anchor=south, blue] at (2,0) {$H^{0,2}_\infty$};
\node[anchor=south] at (-4,0) {$H^{0,-4}_\infty$};
\node[anchor=south] at (4,0) {$H^{0,4}_\infty$};
\node[anchor=south] at (-3,1) {$H^{1,-3}_\infty$};
\node[anchor=south] at (3,-1) {$H^{-1,3}_\infty$};
\node[anchor=south,green] at (-3,-1) {$H^{-1,-3}_\infty$};
\node[anchor=south, green] at (3,1) {$H^{1,3}_\infty$};
\node[anchor=south] at (4,2) {$H^{2,4}_\infty$};
\node[anchor=south] at (-4,-2) {$H^{-2,-4}_\infty$};
\node[anchor=south,orange] at (4,-2) {$H^{-2,4}_\infty$};
\node[anchor=south,orange] at (-4,2) {$H^{2,-4}_\infty$};

\end{tikzpicture}
\end{center}
\caption{Principal series representations, where the colored one indicate the intertwining relations that occures between each others with the same colors.}
 \label{fig:HSL(2,C)}
\end{figure}
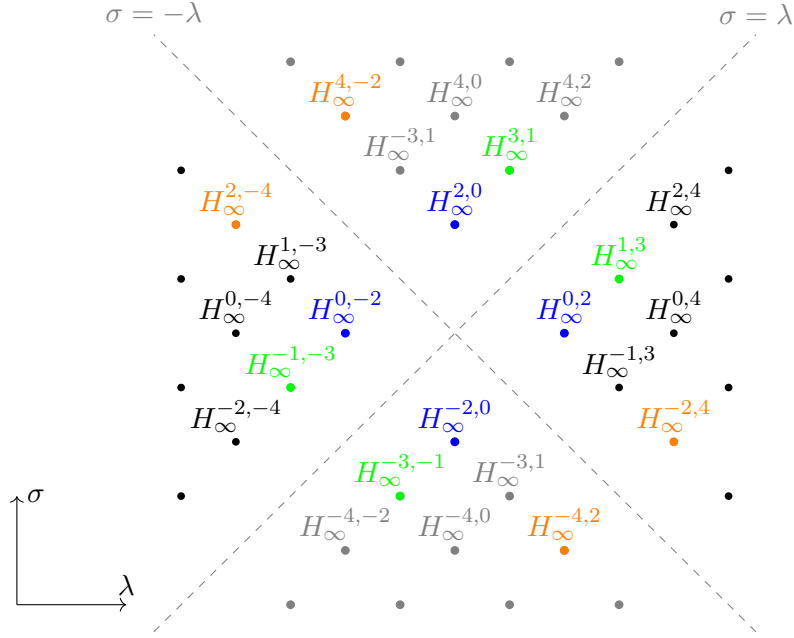

\begin{exmp}
Fix $(\sigma, \lambda) \in \widehat{M} \times \mathfrak{a}^*_\mathbb{C}$ such that $\lambda-\vert\sigma\vert \in 2 \mathbb{N}$, we have the special condition
\begin{equation} \label{eq:Zelointcond}
L_{\sigma,\lambda} \circ \phi(-\lambda,-\sigma) = \phi(\sigma, \lambda) \circ L_{\sigma,\lambda}.
\end{equation}
\end{exmp}

\begin{thm}[Intertwining condition in (\textcolor{blue}{Level 1})] \label{thm:Meta1SL2C}
For $r>0$, let $\mathcal{A}$ be the space of all 
$$\phi \in \prod_{\sigma\in \widehat{M}} \emph \Hol(\mathfrak{a}^*_\mathbb{C},  \emph \End(H^\sigma_\infty))$$
 such that $\phi$ statisfies the corresponding growth condition as well as the two intertwining conditions (\ref{eq:intwcondKS}) and (\ref{eq:Zelointcond}).
Then, $\mathcal{A}$ satisfies the conditions of Thm.~\ref{thm:Meta}, this means that
$\mathcal{A}=PW_r(G).$
\end{thm}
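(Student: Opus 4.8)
The plan is to check that $\mathcal{A}$ meets every hypothesis of the rank-one criterion Thm.~\ref{thm:Meta} and then invoke it verbatim. Two structural features of $G=\SL(2,\C)$ organize the whole argument: it has real rank one, and it has \emph{no} discrete series, since its unique conjugacy class of Cartan subgroups is non-compact, so $\widehat{G}_d=\emptyset$. Hence the discrete-series condition (\ref{eq:intwconddiscrete}) defining $PW^+_r(G)$ is vacuous, so $PW^+_r(G)$ is cut out by the growth condition and the Knapp-Stein condition (\ref{eq:intwcondKS}) alone, and the hypothesis $(D.b)(ii)$ is automatically fulfilled.

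I would first dispatch the routine hypotheses. The conditions (\ref{eq:intwcondKS}) and (\ref{eq:Zelointcond}) are linear and closed, and as $J_{w,\sigma,\lambda}$ and $L_{\sigma,\lambda}$ are $G$-equivariant they are compatible with the $K\times K$-action; thus $\mathcal{A}$ is a linear, closed, $K\times K$-invariant subspace, and since its elements satisfy the growth and Knapp-Stein conditions, $\mathcal{A}\subseteq PW^+_r(G)$. The inclusion $\mathcal{F}_{\sigma,\lambda}(C^\infty_r(G))\subseteq\mathcal{A}$ is immediate from Thm.~\ref{thm:Delorme1}: a genuine Fourier transform satisfies the full Delorme condition $(D.a)$, hence its special instances (\ref{eq:intwcondKS}) and (\ref{eq:Zelointcond}), together with the growth bound.

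The heart of the proof is condition $(D.b)$. Let $\sigma=\sigma_l$ and let $\phi\in\mathcal{A}$ satisfy $(i)$, i.e. $\phi(\sigma',\cdot)=0$ for every $\sigma'$ with $|\sigma'|>|\sigma|$; part $(ii)$ is empty. Fix $\lambda$ with $\Repart(\lambda)>0$ and let $(m-1)$ be the order attached to $(\sigma,\lambda)$. I would first show $m\le 1$, imitating part (a) of the proof of Thm.~\ref{thm:Meta1}: using the explicit Harish-Chandra $\cc$-function of $\SL(2,\C)$ one computes the $K$-type eigenvalues of $J_{w,\sigma,\lambda}$ and checks that all its zeros in $\{\Repart(\lambda)>0\}$ are simple, which together with the length-two composition series of every reducible $H^{\sigma,\lambda}_\infty$ (Thm.~\ref{thm:WallachSL2C}) forces $m\le 1$. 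When $m=0$ one has $H^{\sigma,\lambda}_{\infty,(-1)}=\{0\}$ and nothing is to prove, so only $m=1$ remains, where $\Ker(J^{(0)}_{w,\sigma,\lambda})=\Ker(J_{w,\sigma,\lambda})=R^{\sigma,\lambda}$. The decisive point is that reducibility forces $\lambda\in\Z$ with $\lambda>|\sigma|=|l|$, and the Želobenko operator $L_{\sigma,\lambda}$ has source $H^{-\lambda,-\sigma}_\infty$ whose minimal $K$-type is $\delta_{|\lambda|}=\delta_\lambda$; since $|\sigma_{-\lambda}|$ grows with the highest weight and $\lambda>|l|$, we get $|\sigma_{-\lambda}|>|\sigma|$, so hypothesis $(i)$ forces $\phi(-\lambda,-\sigma)=0$. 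Substituting into (\ref{eq:Zelointcond}), namely $L_{\sigma,\lambda}\circ\phi(-\lambda,-\sigma)=\phi(\sigma,\lambda)\circ L_{\sigma,\lambda}$, gives $\phi(\sigma,\lambda)\circ L_{\sigma,\lambda}=0$, whence $\phi(\sigma,\lambda)$ annihilates $\Impart(L_{\sigma,\lambda})=R^{\sigma,\lambda}=\Ker(J_{w,\sigma,\lambda})$, which is exactly the vanishing demanded by $(D.b)$. Thm.~\ref{thm:Meta} then yields $\mathcal{A}=PW_r(G)$.

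I expect the genuine obstacle to be the order-of-zeros step $m\le1$: carefully carrying out the $\cc$-function bookkeeping for $\SL(2,\C)$ and excluding higher-order vanishing of the induced maps on the derived modules $H^{\sigma,\lambda}_{\infty,(m)}$, so that the clean Želobenko reduction suffices and one never has to analyse $J^{(m-1)}_{w,\sigma,\lambda}$ on genuinely higher derived representations. Everything else is bookkeeping, with the single conceptual input being that the source $H^{-\lambda,-\sigma}_\infty$ of the Želobenko operator has strictly larger minimal $K$-type than $H^{\sigma,\lambda}_\infty$ at every reducibility point.
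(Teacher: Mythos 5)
Your proposal is correct and follows essentially the same route as the paper's proof: verify the routine closedness/invariance hypotheses, use $\widehat{G}_d=\emptyset$ to void condition $(ii)$, establish via the explicit $\cc$-function quotient that $J_{w,\sigma,\lambda}$ has only simple zeros on $\{\Repart(\lambda)>0\}$, identify $\Ker(J_{w,\sigma,\lambda})=\Impart(L_{\sigma,\lambda})=R^{\sigma,\lambda}$ from Thm.~\ref{thm:WallachSL2C}, and then combine hypothesis $(i)$ (the source $H^{-\lambda,-\sigma}_\infty$ of the Želobenko operator has strictly larger minimal $K$-type) with (\ref{eq:Zelointcond}) to get $\phi(\sigma,\lambda)\circ L_{\sigma,\lambda}=0$. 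This is exactly the paper's argument, down to the identification of the order-of-zeros step and the Želobenko reduction as the two key points.
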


\noindent
Before we proceed with the proof of Thm.~\ref{thm:Meta1SL2C}, let us first state the explicit expression of the Harish-Chandra $\mathbf{c}$-function \cite[App. 2]{Cohn} for $G=\mathbf{SL}(2,\mathbb{C})$, which
is given by the following formula, for $\vert\sigma\vert \leq n, \sigma \equiv n$ (mod 2):
\begin{equation*} \label{eq:cfctSL(2,C)}
\mathbf{c}_{n}(\sigma,\lambda) := \frac{\Gamma(\frac{1}{2}(\lambda+\sigma))\Gamma(\frac{1}{2}(\lambda-\sigma))}{\Gamma(\frac{1}{2}(\lambda+n+2))\Gamma(\frac{1}{2}(\lambda-n))}, \;\;\; \lambda \in\mathfrak{a}^*_\mathbb{C}.
\end{equation*}
Consider an additional, not necessarily distinct, $K$-type $m$ and fix $\lambda \in \mathfrak{a}^*_\mathbb{C}$.
Then, by using repeatedly the relation (\ref{eq:Gammarelation}), we obtain for $n \equiv m$ (mod 2),  the following quotient:
\begin{eqnarray} \label{eq:cfctquotientSL2C}
\frac{\mathbf{c}_{n}(\sigma,\lambda)}{\mathbf{c}_{m}(\sigma,\lambda)}
&=&\frac{\Gamma(\frac{\lambda}{2}+\frac{m}{2}+1)\Gamma(\frac{\lambda}{2}-\frac{m}{2})}{\Gamma(\frac{\lambda}{2}+\frac{n}{2}+1)\Gamma(\frac{\lambda}{2}-\frac{n}{2})}  \nonumber \\
&=&
\begin{cases}
1, &\;\text{if } n=m\\
\frac{(\lambda+m)(\lambda+m-2)(\lambda+m-4) \cdots (\lambda+n+2)}{(\lambda-m)(\lambda-(m+2))(\lambda-(m+4))\cdots (\lambda-(n+2))}, &\;\text{if } n<m\\
\frac{(\lambda-n)(\lambda-(n+2))(\lambda-(n+4))\cdots (\lambda-(m+2))}{(\lambda+n)(\lambda+n-2)(\lambda+n-4) \cdots (\lambda+m+2)}, &\;\text{if } n>m.
\end{cases}
\end{eqnarray}
Hence, we can directly see that the quotient has zeros in $\{-m, -m+2, \dots, -n-~2\}$ and poles in $\{m,m+2, \dots, n+2\}$ for $n < m$ and inversely for $n>m$.

\begin{proof}[Proof of Thm.~\ref{thm:Meta1SL2C}]
$\mathcal{A}$ is a $K\times K$-invariant closed linear subspace.
We proceed similar as in the proof of Thm.~\ref{thm:Meta1}.
Note that $\widehat{G}_d = \emptyset$.
Consider $\phi \in \mathcal{A}$ such that for $\sigma \in \widehat{M}$, the assumption $(D.b) (i)$ of Thm.~\ref{thm:Meta} is satisfied:
\begin{equation} \label{eq:Assump1}
\phi_{\sigma'}=0 \text{ for all } \sigma' \in \widehat{M} \text{ with }\vert\sigma'\vert > \vert\sigma\vert.
\end{equation}
Analogous to the proof of Thm.~\ref{thm:Meta1}, we need to check that:
\begin{itemize}
\item[(a)] for $\text{Re}(\lambda) >0$, the intertwining opertator $J_{w,\sigma,\lambda}$ has a zero of order at most one.
\item[(b)] the kernel of $J_{w,\sigma,\lambda}$ is equal to 0 or $R^{\sigma,\lambda}$ for $\text{Re}(\lambda)>0$.
\end{itemize}
The minimal $K$-type of the principal series representation $H^{\sigma,\lambda}_\infty$ is $n=\vert\sigma\vert$, hence its Harish-Chandra $\mathbf{c}$-function (\ref{eq:cfctSL(2,C)}) $\mathbf{c}_{\sigma,\sigma}$ is regular for $\text{Re}(\lambda) >0$.
Let $n\in \widehat{K}$, then
we see that for $n \geq \vert\sigma\vert$ and for $\text{Re}(\lambda)>0$:
$$\frac{\mathbf{c}_{n}(\sigma,\lambda)}{\mathbf{c}_{\sigma}(\sigma,\lambda)}=\frac{(\lambda-n)(\lambda-(n+2))(\lambda-(n+4))\cdots (\lambda-(\sigma+2))}{(\lambda+n)(\lambda+n-2)(\lambda+n-4) \cdots (\lambda+\sigma+2)} $$
is also regular and has no poles, but zeros 
$\lambda\in \{n,n+2,\dots,\sigma+2\}$ of first order.
Hence due (\ref{eq:normKSintw}), $J_{w,\sigma,\lambda}$ has zeros of order one, this proves the first assertation (a) of the claim.

By Thm.~\ref{thm:WallachSL2C}, we have 
$\text{Ker}(J_{w,\sigma,\lambda})=\text{Im}(L_{\sigma,\lambda})=R^{\sigma,\lambda}$, 
thus, this implies (b).

Now, by putting everyting together and using the intertwining condition (\ref{eq:Zelointcond}) as well as (\ref{eq:Assump1}), we have for each $\text{Re}(\lambda)>0$ with $\vert\lambda\vert-\vert \sigma \vert \in 2 \mathbb{N}$ that
$$\phi_\sigma(\lambda) \circ L_{\sigma,\lambda} = L_{\sigma,\lambda} \circ \underbrace{\phi_{-\lambda}(-\sigma)}_{=0}=0.$$
Thus, by (b) and the assumption, we deduce that the operator $\phi^{(0)}_\sigma(\lambda)$ annihiliates $\text{Ker}(J_{w,\sigma,\lambda})=\text{Im}(L_{\sigma,\lambda})=R^{\sigma,\lambda}$ for  $\vert\lambda\vert > \vert\sigma\vert$ and $\vert\lambda\vert-\vert\sigma\vert$ even.
Moreover, by (a), this condition is sufficient since the order $m$ is one.
This completes the proof.
\end{proof}

Now let us move to (\textcolor{blue}{Level 2}) and state the corresponding intertwining conditions there.
In fact, this will determine explicitly the Paley-Wiener(-Schwartz) spaces for $G=\mathbf{SL}(2, \mathbb{C})$ in (\textcolor{blue}{Level 2}).
In order to distinguish between representation spaces of $K$ and $M$, we denote $E_{\delta_n}$ by $E_n$, while the one-dimensional space $E_{\sigma_l}$ is denoted by $ \mathbb{C}_l$.

\begin{thm}[Intertwining condition in (\textcolor{blue}{Level 2})] \label{thm:Meta2SL2C}
Let $n \in \mathbb{N}_0$ be a $K$-type and $k,l \in \widehat{M} \cong \mathbb{Z}$ so that $n \geq \vert k\vert,\vert l\vert, l > \vert k\vert$ and
$k \equiv l \equiv n$ (mod 2).
Consider the operator
$$l^n_{k,l}: \emph \Hom_M(E_n,  \mathbb{C}_{-l}) \longrightarrow \emph \Hom_M(E_n, \mathbb{C}_{k})$$
defined as in Example~\ref{exmp:IntwOp} corresponding to $L=L_{k,l}$ and $\tau=\delta_n$.
Then, $\psi \in \emph \Hol(\mathfrak{a}^*_ \mathbb{C},H^{n\vert_M}_\infty)$ satisfies the intertwining condition (D.2) of Def.~\ref{defn:intcondL2L3} if, and only if,
\begin{itemize}
\item[(2.a)] 
$J_{w,n,\lambda} \psi(\lambda)=  
\begin{pmatrix} 
c_{n}(n,\lambda) 	&  	0		& \dots	& 0 \\
		0		&c_{n}(n-2,\lambda)	&  \dots 	& 0 \\
			\vdots	&	\vdots			&\ddots	& \vdots \\
	0			&		0		&	\dots	& c_{n}(-n,\lambda)
\end{pmatrix} 
\psi(-\lambda)$
for all  $\lambda \in \mathfrak{a}^*_ \mathbb{C},$
\item[(2.b)] $L_{k, l}( t \circ \psi(-k)) = l^n_{k,l}(t) \circ \psi(l)$ for all  $\lambda \in \mathfrak{a}^*_ \mathbb{C}$ and $t \in \emph \Hom_M(E_n,E_{-l})$.
\end{itemize}
\end{thm}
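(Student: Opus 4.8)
The plan is to follow the template of the proof of Theorem~\ref{thm:Meta2SL2R}: show that $(2.a)$ and $(2.b)$ are exactly the content of $(D.2)$ once the intertwining data are reduced to the ones that actually matter for $\SL(2,\C)$. First I would recall from the proof of Theorem~\ref{thm:Meta1SL2C} that $\widehat{G}_d=\emptyset$ and that every Knapp-Stein operator $J_{w,\sigma,\lambda}$ has a zero of order at most one for $\Repart(\lambda)>0$; hence only non-derived data ($m_i=0$) occur, and by Theorem~\ref{thm:Meta1SL2C} the whole family of data in $(D.a)$---equivalently in $(D.2)$---is already exhausted by the graphs of the Knapp-Stein operators $J_{w,\sigma,\lambda}$ (condition~(\ref{eq:intwcondKS})) and the graphs of the Želobenko operators $L_{\sigma,\lambda}$ (condition~(\ref{eq:Zelointcond})). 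This reduces the verification of $(D.2)$ to imposing the Level~2 condition only for these two families.

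For the Knapp-Stein family I would apply Proposition~\ref{prop:normKSLevel2}(a) directly: with $w=-1$ and $\tau=\delta_n$ it reads $J_{w,\delta_n,\lambda}\psi(\lambda,\cdot)=\cc_{\delta_n}(\lambda)\,\psi(-\lambda,\cdot)$. Since $M$ is a torus and the weights $-n,-n+2,\dots,n$ of $\delta_n$ each occur with multiplicity one, Schur's lemma forces $\cc_{\delta_n}(\lambda)\in\End_M(E_{\delta_n})$ to be diagonal in the weight basis with entries $\cc_{n,n}(\lambda),\cc_{n,n-2}(\lambda),\dots,\cc_{n,-n}(\lambda)$, which is precisely the matrix appearing in $(2.a)$.

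For the Želobenko family I would specialise Example~\ref{exmp:IntwOp} to $L=L_{k,l}$, so that $\sigma_1=-l$, $\lambda_1=-k$, $\sigma_2=k$, $\lambda_2=l$, and the associated operator $l^\tau$ becomes exactly $l^n_{k,l}$. The datum is $W=\mathrm{graph}(L_{k,l})\subset H^{-l,-k}_\infty\oplus H^{k,l}_\infty$, for which $D^{\delta_n}_W=\{(t,l^n_{k,l}(t))\mid t\in\Hom_M(E_n,\C_{-l})\}$. This space is one-dimensional exactly when $\C_{-l}$ is a weight of $\delta_n$, i.e. when $n\ge l$ (the hypothesis $n\ge|k|,|l|$), and is zero otherwise, so no condition arises when $\delta_n$ misses the submodule $R^{k,l}\cong H^{-l,-k}_\infty$. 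Feeding this one-dimensional $D^{\delta_n}_W$ into $(D.2)$ through~(\ref{eq:Lintwcond2}) turns the membership ``$t\circ\psi_{\overline{\xi}}\in W$'' into the graph relation $L_{k,l}(t\circ\psi(-k))=l^n_{k,l}(t)\circ\psi(l)$, which is $(2.b)$.

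The two implications then close as follows. Any $\psi$ satisfying $(D.2)$ satisfies it for the Knapp-Stein and the Želobenko data in particular, yielding $(2.a)$ and $(2.b)$. Conversely, $(2.a)$ and $(2.b)$ are the Level~2 conditions attached to precisely the data that Theorem~\ref{thm:Meta1SL2C} singles out as sufficient, so by the correspondence between $(D.a)$ and $(D.2)$ (\cite{PalmirottaPWS}) together with that reduction they force $(D.2)$ for every datum. I expect the real obstacle to be this sufficiency direction: one has to argue cleanly that the reduction of the set of intertwining data established at Level~1 transfers to Level~2, and in particular that the single graph relation $(2.b)$ for $L_{k,l}$ is genuinely stronger than, and subsumes, the mere membership $\psi(l)\in R^{k,l}$ coming from the one-sided submodule datum. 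The remaining bookkeeping---matching $l^n_{k,l}$ and the diagonal $\cc$-matrix against the weight decomposition of $\delta_n$, consistently with the normalisation~(\ref{eq:normKSintw})---is routine.
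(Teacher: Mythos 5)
Your proposal is correct and follows essentially the same route as the paper: the paper's own (very terse) proof likewise identifies $(2.a)$ with Proposition~\ref{prop:normKSLevel2}(a) and $(2.b)$ with the specialisation of Example~\ref{exmp:IntwOp} to $L=L_{k,l}$, with the sufficiency of these two families of data resting on the Level~1 reduction of Theorem~\ref{thm:Meta1SL2C}. Your write-up is in fact more explicit than the paper's about the parameter identification $(\sigma_1,\lambda_1)=(-l,-k)$, $(\sigma_2,\lambda_2)=(k,l)$, the diagonal form of $\cc_{\delta_n}(\lambda)$, and the point where the Level~1 reduction must be transferred to Level~2.
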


\begin{proof}[Proof of Thm.~\ref{thm:Meta2SL2C}]
By Prop.~\ref{prop:normKSLevel2}, we have that $(2.a)$ corresponds to (\ref{eq:Lintwcond2}), hence it corresponds to the intertwining condition $(D.2)$ in Def.~\ref{defn:intcondL2L3}.
Similar for $(2.b)$, which is, by Example~\ref{exmp:IntwOp} (\ref{eq:Lintwcond2}), a special intertwining condition of $(D.2)$.
Hence, we have equivalence between the conditions $(2.a)$, $(2.b)$ and $(D.2)$.
\end{proof}

In order to move to (\textcolor{blue}{Level 3}), let us first consider the case where the two $K$-types $(n, E_n)$ and $(m, E_m)$ are equal and then progress to the case of distinct $K$-types.

\subsection*{Initial case: The $K$-types $m$ and $n$ are equal}

\begin{defn} \label{def:Amm}
Let $(m,E_m)$ be a fixed $K$-type and $k=-m,-(m-2), \dots, m-2,m$.\\
We define $\prescript{}{m}{\mathcal{A}}_{m}$ as the space of all elements in $ \emph \Hol(\mathfrak{a}^*_ \mathbb{C}, \emph \End_M(E_m))$, which are given by holomorphic functions $\varphi_k:\mathfrak{a}^*_ \mathbb{C} \rightarrow \mathbb{C}, $ ordered to a $(m+1) \times (m+1)$ diagonal matrix with respect to $M$-weight vectors as a basis of $E_m$:
$$
\varphi:=\begin{pmatrix} 
\varphi_{m}(\lambda) 	& 0 					& \cdots 					& 0 \\
0 				& \varphi_{m-2}(\lambda)	& \cdots 					& 0 \\
\vdots 			& \vdots 				& \ddots 					&\vdots\\
0 				&  0  				& \cdots						& \varphi_{-m}(\lambda) \\
\end{pmatrix}\in \emph \Hol(\mathfrak{a}^*_ \mathbb{C},  \emph \End_M(E_m))$$
such that
\begin{eqnarray} \label{eq:propSL2C}
\varphi_k(\lambda)&=&\varphi_{-k}(-\lambda),\; \text{ for } \lambda \in \mathfrak{a}^*_ \mathbb{C} \nonumber \\
\varphi_{k}(l)&=& \varphi_{l}(k), \; \text{ for } k \equiv l \equiv m \text{ (mod } 2) \;\text{and}\; \vert k\vert ,\vert l\vert \leq m.
\end{eqnarray}
\end{defn}

\noindent
Note that $\prescript{}{m}{\mathcal{A}}_{m} \subset \text{Hol}(\mathfrak{a}^*_ \mathbb{C},   \text{End}_M(E_m))$ is an algebra.
Let $\prescript{}{m}{PWS}_{m,H}(\mathfrak{a}^*_ \mathbb{C})$ be the pre-Paley-Wiener-Schwartz space consisting of all $\varphi \in \text{Hol}(\mathfrak{a}^*_ \mathbb{C}, \text{End}_M(E_m))$ satisfying the intertwining conditions $(D.3)$ of Def.~\ref{defn:intcondL2L3}.

\begin{thm} \label{thm:IsomAm}
With the previous notations, we have that
$$\prescript{}{m}{\mathcal{A}}_{m} \cong \prescript{}{m}{PWS}_{m,H}(\mathfrak{a}^*_ \mathbb{C}).$$
\end{thm}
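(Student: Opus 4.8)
The plan is to prove the asserted isomorphism by showing that the Level~3 intertwining condition $(D.3)$ of Def.~\ref{defn:intcondL2L3}, which cuts out $\prescript{}{m}{PWS}_{m,H}(\aL^*_\C)$, is equivalent to the two explicit relations (\ref{eq:propSL2C}) defining $\prescript{}{m}{\mathcal{A}}_{m}$. Since the $M$-weights of $E_m$ all occur with multiplicity one, $\End_M(E_m)$ is the algebra of matrices diagonal in the weight basis; hence every $\varphi \in \Hol(\aL^*_\C,\End_M(E_m))$ is automatically of the diagonal form displayed in Def.~\ref{def:Amm}, both $\prescript{}{m}{\mathcal{A}}_{m}$ and $\prescript{}{m}{PWS}_{m,H}(\aL^*_\C)$ sit inside this common ambient space, and the isomorphism is realized by the identity map. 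All that is at stake is the equivalence of the two sets of defining conditions.

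The key structural input is the reduction furnished by the meta-theorem Thm.~\ref{thm:Meta} and its $\SL(2,\C)$ specialization Thm.~\ref{thm:Meta1SL2C}: because $\widehat{G}_d=\emptyset$ for $\SL(2,\C)$, the whole family of intertwining data in $(D.a)$ — and, through the correspondence of Def.~\ref{defn:intcondL2L3}, the condition $(D.3)$ — is generated by exactly two families of operators, the Knapp-Stein operators $J_{w,\sigma,\lambda}$ (condition (\ref{eq:intwcondKS})) and the Želobenko operators $L_{\sigma,\lambda}$ (condition (\ref{eq:Zelointcond})). I would therefore establish both inclusions simultaneously by translating these two families into Level~3 and matching them against (\ref{eq:propSL2C}). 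Necessity is immediate, since each is literally a special instance of $(D.3)$; sufficiency is where the meta-theorem does the work, guaranteeing that no further intertwining data need be checked.

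For the Knapp-Stein family I would apply Prop.~\ref{prop:normKSLevel2}(b) with $\gamma=\tau=\delta_m$, $w=-1$ and $m_w=\left(\begin{smallmatrix} 0 & -1 \\ 1 & 0 \end{smallmatrix}\right)$. Evaluating the functional equation (\ref{eq:normKSintwop3}) on the weight-$k$ basis vector and using that $\delta_m(m_w)$ interchanges the weight spaces $k\leftrightarrow -k$ while $\cc_{w^{-1},\delta_m}(-w\lambda)$ is diagonal, one finds that the scalar $\cc$-factors (and the factors coming from $\delta_m(m_w)$) appear on both sides with the same argument and cancel, leaving exactly the Weyl-symmetry relation $\varphi_k(\lambda)=\varphi_{-k}(-\lambda)$, i.e. the first line of (\ref{eq:propSL2C}). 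For the Želobenko family I would feed $L=L_{k,l}$ into Example~\ref{exmp:IntwOp} via condition (D3.L), Eq.~(\ref{eq:Lintwcond3}) — equivalently, transcribe condition (2.b) of the Level~2 statement Thm.~\ref{thm:Meta2SL2C} through Frobenius reciprocity. By Thm.~\ref{thm:WallachSL2C} the operator $L_{k,l}$ exists precisely at the reducibility points $l>|k|$, $l\equiv k\ (\mathrm{mod}\ 2)$ (with $l\le m$ so that $\delta_m$ is actually present), and computing its effect on the scalar components of $\varphi$ with the explicit $\cc$-quotients (\ref{eq:cfctquotientSL2C}) forces $\varphi_k(l)=\varphi_l(k)$. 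Running over all admissible pairs and invoking the already-established symmetry to exchange $k$ and $l$ (and to cover $l<|k|$) yields the second line of (\ref{eq:propSL2C}), the relations $\varphi_k(l)=\varphi_l(k)$ for $|k|,|l|\le m$.

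I expect the main obstacle to be this last step: pinning down, for the BGG operator $L_{k,l}$ in the swapped parametrization of Thm.~\ref{thm:WallachSL2C}, exactly which pointwise identity it imposes, and verifying that the normalizing operator $l^m_{k,l}$ together with the $\cc$-function weights cancel cleanly enough that the surviving relation is the unadorned symmetric condition $\varphi_k(l)=\varphi_l(k)$ rather than a weighted variant. I would also take care to confirm that the set of reducibility points contributing to the $\delta_m$-isotypic data coincides exactly with the index range $|k|,|l|\le m$, $k\equiv l\equiv m\ (\mathrm{mod}\ 2)$ of Def.~\ref{def:Amm}, so that (\ref{eq:propSL2C}) records all of, and only, the conditions demanded by $(D.3)$; combined with the two translations above, this gives the equality of the two spaces and hence the stated isomorphism.
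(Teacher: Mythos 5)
Your proposal follows essentially the same route as the paper's proof: reduce $(D.3)$ to the Knapp--Stein and \v{Z}elobenko families via the earlier reduction results, obtain the Weyl symmetry $\varphi_k(\lambda)=\varphi_{-k}(-\lambda)$ from Prop.~\ref{prop:normKSLevel2}(b) with the $\cc$-factors cancelling under conjugation by $\delta_m(m_w)$, and obtain $\varphi_k(l)=\varphi_l(k)$ from $L_{k,l}$ via (\ref{eq:Lintwcond3}) using that $l^m_{k,l}$ is a nonzero scalar on the relevant one-dimensional $\Hom_M$-spaces, then cover the remaining index range by combining with the symmetry already established. This matches the paper's argument in both structure and detail.
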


\begin{proof}
We need to check that the intertwining conditions (\ref{eq:propSL2C}) of $\prescript{}{m}{\mathcal{A}}_{m}$ correspond to the intertwining condition $(D.3)$ of $\prescript{}{m}{PWS}_{m,H}(\mathfrak{a}^*_ \mathbb{C})$.
More precisely, by Example~\ref{exmp:IntwOp}, it suffices to show that the intertwining condition (\ref{eq:Lintwcond3}) for $L=J_{w,\sigma,\lambda}$ and $L=L'_{\sigma,\lambda}$ corresponds to (\ref{eq:propSL2C}).

We know from Prop.~\ref{prop:normKSLevel2} with $m=\gamma=\tau \in \mathbb{N}_0$ that
$$\delta_m(m_w)^{-1}\varphi(\lambda) \delta_m(m_w)=\varphi(-\lambda), \;\; \lambda \in \mathfrak{a}^*_ \mathbb{C}, \varphi \in \text{Hol}(\mathfrak{a}^*_ \mathbb{C}, \text{End}_M(E_m)).$$
Note that the complex hull of $\mathfrak{a}$ is the sum of $\mathfrak{a}$ and $\mathfrak{m}$.
This means that the Weyl group $W_A$ acts on $\mathfrak{a}$ as well as on $\mathfrak{m}$ by $-1$ (thus also on $i\mathfrak{m}^*$ by $-1$).
Therefore the matrix $\text{diag}(\varphi_m(\lambda), \dots, \varphi_{-m}(\lambda))$ is reversed by conjugation by $\delta_m(m_w)$, i.e., we get $\text{diag}(\varphi_{-m}(-\lambda), \dots, \varphi_{m}(-\lambda))$.
Hence, $\varphi_{-k}(-\lambda)=\varphi_k(\lambda)$, for all $\vert k\vert  \leq m$ and $\lambda \in \mathfrak{a}^*_ \mathbb{C}$.

Let $\vert l\vert,\vert k\vert \leq m$ and $k \equiv l \equiv m$ (mod 2).
Let $l^m_{k,l}$ as in Thm.~\ref{thm:Meta2SL2C}.
We know that for $t\in \text{Hom}_M(E_m,\mathbb{C}_{-l})$ we have by (\ref{eq:Lintwcond3}) for $m=\gamma=\tau \in \mathbb{N}_0$ and $l > \vert k\vert$
$$l^m_{k,l}(t \circ \varphi(-k))=l^m_{k,l}(t) \circ \varphi(l).$$
Let $t\in \text{Hom}_M(E_m,\mathbb{C}_{-l})$ and $t' \in \text{Hom}_M(E_m,\mathbb{C}_k)$ be such that $\varphi_{-l}=t\circ \varphi$ and $\varphi_k=t' \circ \varphi$.
We have that 
$l^m_{k,l}(t)=c \cdot t'$, where $c\in \mathbb{C}$.
Note that $c$ is non-zero. In fact, by Thm.~\ref{thm:WallachSL2C}, the intertwining operator $L_{k,l}$ on the $K$-type $m$ is not zero, hence $l^m_{k,l}$ too.
Consequently, we have
\begin{eqnarray} \label{eq:kl}
l^m_{k,l}(t \circ \varphi(-k))=l^m_{k,l}(t) \circ \varphi(l)
\iff c\cdot \varphi_{-l}(-k)= c\cdot \varphi_k(l) 
&\iff& \varphi_{-l}(-k)=\varphi_k(l). \nonumber \\
&&
\end{eqnarray}
For $0 \leq l < \vert k \vert$ we consider $L_{l,\vert k\vert}$ instead. Combined with $\varphi_{-l}(-k)=\varphi_l(k)$, we obtain (\ref{eq:kl}) for every pair $k,l$ as above.
This completes the proof.
\end{proof}

We also consider the corresponding situation in polynomial functions:
$$\prescript{}{m}{\text{Pol}}_{m}:=\{\varphi \in \text{Pol}(\mathfrak{a}^*_\mathbb{C},\text{End}_M(E_m)) \;\vert\; \varphi \text{ satisfies } (\ref{eq:propSL2C})\}.$$ 
It follows from Thm.~\ref{thm:IsomAm} that $\prescript{}{m}{\text{Pol}}_{m}$ is equal to the vector space $\prescript{}{m}{PWS}_{m,0}(\mathfrak{a}^*_\mathbb{C})$.
We will sometimes write elements of $\prescript{}{m}{\text{Pol}}_{m}$ and $\prescript{}{m}{\mathcal{A}}_{m}$ as functions of two variables: 
$$\varphi(\lambda,k):=\varphi_k(\lambda).$$
We consider the subalgebra of $\prescript{}{m}{\text{Pol}}_{m}$ generated by $\lambda^2+k^2$.
It is the subalgebra generated by the Fourier image of the Casimir operator and isomorphic to $\text{Pol}(\mathbb{C})$.
Thus, we can view $\prescript{}{m}{\text{Pol}}_{m}$ as a $\text{Pol}(\mathbb{C})$-module.
Similarly, $\prescript{}{m}{\mathcal{A}}_{m}$ has the structure of $\text{Hol}(\mathbb{C})$-module.
Here, $h \in \text{Hol}(\mathbb{C})$ acts on $\prescript{}{m}{\mathcal{A}}_{m}$ by
$$(h\cdot \varphi)_k(\lambda)=h(\lambda^2+k^2) \varphi_k(\lambda), \;\; h\in \text{Hol}(\mathbb{C}), \varphi \in \prescript{}{m}{\mathcal{A}}_{m}.$$
\begin{thm} \label{prop:Amm}
The algebra $\prescript{}{m}{\mathcal{A}}_{m}$ is a free $\emph \Hol(\mathbb{C})$-module with the $m+1$ generators  $(k \lambda)^l \in \prescript{}{m}{\emph \Pol}_{m} \subset \prescript{}{m}{\mathcal{A}}_{m}, l=0, \dots, m$.
Furthermore, we have
\begin{equation} \label{eq:Ammflat}
\prescript{}{m}{\mathcal{A}}_{m} \cong \emph \Hol(\mathbb{C}) \otimes_{\emph \Pol(\mathbb{C})} \prescript{}{m}{\emph \Pol}_{m}.
\end{equation}
Analogously, $\prescript{}{m}{\emph \Pol}_{m}$ is a free $\emph \Pol(\mathbb{C})$-module with same generators as $\prescript{}{m}{\mathcal{A}}_{m}$.
\end{thm}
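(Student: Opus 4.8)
The strategy is to study the $\Hol(\C)$-linear map
$$\Phi:\Hol(\C)^{\,m+1}\longrightarrow \prescript{}{m}{\mathcal{A}}_{m},\qquad \Phi(h_0,\dots,h_m)_k(\lambda):=\sum_{l=0}^m h_l(\lambda^2+k^2)\,(k\lambda)^l,$$
which by construction carries the $l$-th standard basis vector to the candidate generator $(k\lambda)^l$, and to prove that $\Phi$ is an isomorphism; the verbatim argument over $\Pol(\C)$ settles the polynomial statement, and (\ref{eq:Ammflat}) then follows formally since both sides are free on the common basis $\{(k\lambda)^l\}_{l=0}^m$. Throughout I identify $\varphi\in\prescript{}{m}{\mathcal{A}}_{m}$ with its tuple $(\varphi_k)_{k}$ of diagonal entries, indexed by the weights $k\in S=\{-m,-m+2,\dots,m\}$, subject to the two conditions in (\ref{eq:propSL2C}). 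First I would check that each $(k\lambda)^l$ really lies in $\prescript{}{m}{\Pol}_{m}$: it is invariant under $(\lambda,k)\mapsto(-\lambda,-k)$ and symmetric in $k\leftrightarrow l$, which are exactly the two requirements of (\ref{eq:propSL2C}).

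For injectivity, assume $\sum_l h_l(\lambda^2+k^2)(k\lambda)^l=0$ on $\C\times S$. Splitting the relations for the two weights $k=\pm a$ into even and odd parts in $\lambda$ decouples the even-indexed from the odd-indexed $h_l$. Setting $s=\lambda^2$, $b=a^2$ and $r=s+b$, the even part becomes, for each square $b$ in $B:=\{a^2:a\in S\}$,
$$\sum_{i}h_{2i}(r)\,x_b(r)^i=0,\qquad x_b(r):=b(r-b).$$
Now $|B|=\lfloor m/2\rfloor+1$ equals the number of even indices, and for generic $r$ the nodes $x_b(r)$ are pairwise distinct, so this is an invertible Vandermonde system and every $h_{2i}$ vanishes; dividing the odd part by $a\neq0$ gives the same conclusion with node set $B\setminus\{0\}$, whose size matches the number of odd indices. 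Hence $\Phi$ is injective, and the identical computation yields linear independence over $\Pol(\C)$.

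Surjectivity is the heart of the matter. Given $\varphi$, let $\varphi^+_a(\lambda)=\tfrac12(\varphi_a(\lambda)+\varphi_a(-\lambda))$ be its even part; the first relation in (\ref{eq:propSL2C}) makes $\varphi^+_a$ depend only on $b=a^2$, say $\varphi^+_a(\lambda)=g_b(\lambda^2)$, and I set $\Psi_b(r):=g_b(r-b)$. Running the Vandermonde inversion backwards defines candidate coefficients $h_{2i}(r)$, a priori only meromorphic with poles confined to the points $r_0=a^2+a'^2$ where two nodes collide. The decisive observation is that (\ref{eq:propSL2C}) forces the interpolation data at any two coinciding nodes to agree: whenever $x_b(r_0)=x_{b'}(r_0)$ one computes $\Psi_b(r_0)=\varphi^+_a(a')$ and $\Psi_{b'}(r_0)=\varphi^+_{a'}(a)$, and combining the compatibilities $\varphi_a(\pm a')=\varphi_{\pm a'}(a)$ with the first symmetry $\varphi_k(\lambda)=\varphi_{-k}(-\lambda)$ gives $\varphi^+_a(a')=\varphi^+_{a'}(a)$, i.e.\ $\Psi_b(r_0)=\Psi_{b'}(r_0)$. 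By the theory of confluent (Hermite) interpolation this makes every $h_{2i}$ extend holomorphically across each $r_0$, so the $h_{2i}$ are entire; the odd indices are treated identically. Thus $\varphi=\Phi(h_0,\dots,h_m)$ and $\prescript{}{m}{\mathcal{A}}_{m}$ is $\Hol(\C)$-free on $\{(k\lambda)^l\}_{l=0}^m$. The main obstacle is precisely this removability step: it is where the non-obvious compatibility $\varphi_k(l)=\varphi_l(k)$ of (\ref{eq:propSL2C}) is used, and without it the reconstructed coefficients would remain genuinely meromorphic.

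For the polynomial assertion I would repeat the reconstruction for $\varphi\in\prescript{}{m}{\Pol}_{m}$: the data $\Psi_b$ are then polynomials, so the coefficients $h_l$ are rational functions that are also entire, hence polynomials (a degree estimate on the Vandermonde inversion bounds their degree). Therefore $\prescript{}{m}{\Pol}_{m}$ is $\Pol(\C)$-free on the same generators, and since $\Hol(\C)\otimes_{\Pol(\C)}\prescript{}{m}{\Pol}_{m}$ and $\prescript{}{m}{\mathcal{A}}_{m}$ are free modules with the identical basis $\{(k\lambda)^l\}$, the natural map between them is the isomorphism (\ref{eq:Ammflat}).
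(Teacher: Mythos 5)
Your proposal is correct in substance but follows a genuinely different route from the paper. The paper proves existence and uniqueness of the coefficients $h_l$ in $\varphi_k(\lambda)=\sum_{l=0}^m h_l(\lambda^2+k^2)(k\lambda)^l$ by a two-step induction on $m$: the base cases $m=0,1$ are the even/odd decomposition you also use, and the inductive step restricts $\varphi$ to the inner weights $|k|\le m-2$, applies the hypothesis there, and shows that the remaining discrepancy on the extreme weights $\pm m$ vanishes at the interpolation points $|l|\le m-2$ and is therefore divisible by an explicit auxiliary polynomial $p_m(\lambda,k)=\prod_{|l|\le m-2}(k-l)(\lambda-l)$, which is then re-expanded in the generators $(k\lambda)^l$; uniqueness is handled by a parallel induction. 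You instead treat all weights at once: after the even/odd split and the substitution $r=\lambda^2+k^2$, the system becomes a Vandermonde interpolation problem in the nodes $x_b(r)=b(r-b)$, whose generic invertibility gives uniqueness, and whose inversion gives a priori meromorphic coefficients with poles only at the node collisions $r_0=b+b'$; the compatibility $\varphi_k(l)=\varphi_l(k)$ combined with $\varphi_k(\lambda)=\varphi_{-k}(-\lambda)$ is exactly what makes the interpolation data agree at colliding nodes, so the singularities are removable. I checked the two points on which your argument hinges and they hold: collisions are always of disjoint simple pairs (three nodes cannot merge, since $b_1+b_2=b_1+b_3=b_2+b_3$ forces equality), and for $k$ simultaneous pair collisions the Cramer numerator vanishes to order $\ge k$ by row reduction while the Vandermonde determinant vanishes to order exactly $k$. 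Your route has the virtue of isolating precisely where the non-obvious symmetry $\varphi_k(l)=\varphi_l(k)$ is used and of treating existence and uniqueness in one framework; the paper's induction is more elementary (no removable-singularity analysis) and produces the intermediate polynomials $p_m$ explicitly, which is convenient elsewhere. A full write-up of your version would need to spell out the order-of-vanishing argument at simultaneous collisions, which your appeal to ``confluent interpolation'' currently leaves implicit.
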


\noindent
Note that Thm.~\ref{prop:Amm} also tells that the two elements $\lambda^2+k^2$ and $kl$ generate $\prescript{}{m}{\text{Pol}}_{m}$ as an algebra.
Observe also that $\prescript{}{m}{\text{Pol}}_{m}$ is isomorphic to $\mathcal{D}_G(\mathbb{E}_m,\mathbb{E}_{m})$, the set of all invariant differential operators $D : C^\infty(X,\mathbb{E}_m) \longrightarrow C^\infty(X, \mathbb{E}_{m})$ \cite[Sect.~7]{PalmirottaPWS}.

\begin{proof}
Consider $\varphi \in \prescript{}{m}{\mathcal{A}}_{m}$.
It is sufficient to show the existence and the uniqueness of holomorphic functions $h_0, \dots, h_m \in \text{Hol}(\mathbb{C})$ so that
\begin{equation} \label{eq:phik}
\varphi_k(\lambda):=\sum_{l=0}^m h_l(\lambda^2+k^2)\cdot(k \lambda)^l, \;\;\; \text{ for } k=-m, \dots,m
\end{equation}
and 
\begin{equation} \label{eq:phik2}
\varphi \in \prescript{}{m}{\text{Pol}}_{m} \text{ implies } h_l \in \text{Pol}(\mathbb{C}), \;\;\; \text{ for } l=0, \dots,m.
\end{equation}
Then, $\prescript{}{m}{\mathcal{A}}_{m}$ is a free $\text{Hol}(\mathbb{C})$-module with generators $(k\lambda)^l, l=0,\dots,m$. Similarly for $\prescript{}{m}{\text{Pol}}_{m}$.
Note that $\text{Hol} \otimes_{\text{Pol}} \text{Pol} \cong \text{Hol}$. Since
there are $m+1$ free generators we have
$$\prescript{}{m}{\mathcal{A}}_{m} \cong \text{Hol}(\mathbb{C})^{m+1} \text{ and }
\prescript{}{m}{\text{Pol}}_{m} \cong \text{Pol}(\mathbb{C})^{m+1}.$$
Thus also (\ref{eq:Ammflat}) follows.

For the existence of $h_l \in \text{Hol}(\mathbb{C}),$ $l=0, \dots, m$,
we proceed by a step two induction on $m \in \mathbb{N}_0$.\\
For $m=0$, we have
$$\prescript{}{0}{\mathcal{A}}_{0}=\{ \varphi_0\in \text{Hol}(\mathfrak{a}^*_\mathbb{C}) \;\vert\; \varphi_0(\lambda)=\varphi_0(-\lambda), \forall \lambda \in \mathfrak{a}^*_\mathbb{C}\}.$$
We see immediately that there is exactly one holomorphic function $h_0$ such that $\varphi_0(\lambda)=h_0(\lambda^2)$.
$\varphi_0$ is a polynomial function if and only if $h_0$ is one.\\
For $m=1$, we have that
$\prescript{}{1}{\mathcal{A}}_{1}=\{(\varphi_1, \varphi_{-1}) \in  \text{Hol}(\mathfrak{a}^*_\mathbb{C})^2 \;\vert\; \varphi_{-1}(\lambda)=\varphi_1(-\lambda) \forall \lambda \in \mathfrak{a}^*_\mathbb{C}\}.$
$\varphi_1$ can be decomposed into an even and an odd part as follows:
$$\varphi_1(\lambda)=\varphi^{\text{even}}_1(\lambda)+\varphi^{\text{odd}}_1(\lambda)
= h_0(\lambda^2+1) + \lambda h_1(\lambda^2+1).$$
Then, $\varphi_{-1}(\lambda)=\varphi_1(-\lambda)=h_0(\lambda^2+1) - \lambda h_1(\lambda^2+1).$
Hence, this leads us to desired relation $\varphi_k(\lambda)=h_0(\lambda^2+k^2)+(k \lambda)h_1(\lambda^2+k^2)$ for $k = \pm 1$.
$h_0$ and $h_1$ are polynomials if and only if $\varphi_{\pm 1}$ are polynomials as well.\\
Assume now the existence of $h_l$ satisfying (\ref{eq:phik}) and (\ref{eq:phik2}) for $m$ replaced by $m-2$. 
Let $\varphi \in \prescript{}{m}{\mathcal{A}}_{m}$.
We have
$$\overline{\varphi}:=\text{diag}(\varphi_{m-2},\varphi_{m-4} , \dots, \varphi_{-(m-4)}, \varphi_{-(m-2)})
\in \prescript{}{m-2}{\mathcal{A}}_{m-2}$$
so that, by induction hypothesis, there exists $h_l \in \text{Hol}(\mathbb{C})$ with 
$$\varphi_k(\lambda)=\sum_{l=0}^{m-2} h_l(\lambda^2+k^2)(k\lambda)^l \;\;\; \text{ for } \vert k \vert \leq m-2,$$
and $h_l \in \text{Pol}(\mathbb{C})$, if $\varphi \in \prescript{}{m}{\text{Pol}}_m$.
Consider 
$$\tilde{\varphi}:=\text{diag}(\tilde{\varphi}_{m}, \varphi_{m-2}, \dots, \varphi_{-(m-2)}, \tilde{\varphi}_{-m})
\in \prescript{}{m}{\mathcal{A}}_{m}$$
with $$\tilde{\varphi}_{\pm m}(\lambda):=\sum_{l=0}^{m-2} h_l(\lambda^2+m^2)(\pm m\lambda)^l.$$
By taking the difference of $\varphi$ and $\tilde{\varphi}$, we get that 
$$\varphi-\tilde{\varphi}=\text{diag}(\varphi^+_{m}, 0, \dots, 0, \varphi^+_{-m})
\in \prescript{}{m}{\mathcal{A}}_{m},$$
where we have set $\varphi^+_{\pm m}(\lambda):= \varphi_{\pm m}(\lambda)-\tilde{\varphi}_{\pm m}(\lambda)$.
Notice that $\varphi^+_{\pm m}(l)=0$ for $\vert l\vert \leq m-2, l \equiv m$ (mod 2).
We introduce the polynomial function
$$
p_m(\lambda,k):= \prod_{\substack{ \vert l\vert \leq m-2\\ l \equiv m \text{ (mod 2)}}} (k-l)(\lambda-l) \in \prescript{}{m}{\text{Pol}}_{m}.
$$
Note that $p_m(\lambda,k) \equiv 0$ for $\vert k\vert \leq m-2, k \equiv m$ (mod 2).
Moreover, if $k=m$, then 
$p_m(\lambda,m)=c_m\prod_{\substack{ \vert l\vert  \leq m-2\\ l \equiv m \text{ (mod 2)}}} (\lambda-l),$
where $c_m$ is a non-zero constant depending on the integer $m$.
We conclude that there exist $h^+_0, h^+_1 \in \text{Pol}(\mathbb{C})$, if $\varphi \in \prescript{}{m}{\text{Pol}}_m$ such that
$$\varphi^+_m(\lambda)= [h^+_0(\lambda^2+m^2)+h^+_1(\lambda^2+m^2)(m\lambda)] p_m(\lambda,m).$$
This implies that $(\varphi-\tilde{\varphi})(\lambda,k)= [h^+_0(\lambda^2+k^2)+ h^+_1(\lambda^2+k^2)(k\lambda)] p_m(\lambda,k)$.
In addition,
$(k-\lambda)(\lambda-l)(k+l)(\lambda+l)=(k^2-l^2)(\lambda^2-l^2)=(k\lambda)^2-l^2(\lambda^2+k^2)+l^4$
and thus $p_m(\lambda,k)$ is of the form:
\begin{equation} \label{eq:pmk} 
p_m(\lambda,k)=(k\lambda)^{m-1} + \sum_{\substack{l=0 \\ l \equiv m-1 \text{ (mod 2)}}}^{m-3} p^m_l(\lambda^2+k^2)(k\lambda)^l,
\end{equation}
where $p^m_l$ are certain polynomials.
We obtain
\begin{eqnarray*} 
(\varphi-\tilde{\varphi})(\lambda,k)
&{=}& h^+_0(\lambda^2+k^2)(k\lambda)^{m-1}+ h^+_0(\lambda^2+k^2)\sum_{\substack{l=0 \\ l \equiv m-1 \text{ (mod 2)}}}^{m-3} p^m_l(\lambda^2+k^2)(k\lambda)^l\\
&&+h^+_1(\lambda^2+k^2)(k\lambda)^m +h^+_1(\lambda^2+k^2)\sum_{\substack{l=0 \\ l \equiv m-1 \text{ (mod 2)}}}^{m-3} p^m_l(\lambda^2+k^2)(k\lambda)^{l+1}.
\end{eqnarray*}
This implies that $\varphi-\tilde{\varphi}$ is of the desired form, hence $\varphi$.

Concerning the uniqueness, we need to show that 
\begin{equation} \label{eq:phik0}
\sum_{l=0}^m h_l(\lambda^2+k^2)(k\lambda)^l=0, \; \forall \vert k\vert\leq m, \lambda \in \mathfrak{a}^*_\mathbb{C}, \text{ implies } h_l=0,  l=0, \dots,m.
\end{equation}
We proceed again by a two step induction on $m \in \mathbb{N}_0$.
For the initial cases $m=0,1$, the assertation is clear, see above.
Assume that (\ref{eq:phik0}) holds true for $m-2$ and let us prove that it holds for $m$. 
Let $\varphi \in \prescript{}{m}{\mathcal{A}}_m$.
By using the polynomial function (\ref{eq:pmk}), we have for $\vert k\vert \leq m-2$:
\begin{eqnarray*}
0
&=& \sum_{l=0}^m  h_l(\lambda^2+k^2)(k\lambda)^l  - [h_{m-1}(\lambda^2+k^2) +(k\lambda) h_m(\lambda^2+k^2)p_m(\lambda,k)] \\
&=&\sum_{l=0}^{m-2} h_l(\lambda^2+k^2)(k\lambda)^l
- h_{m-1}(\lambda^2+k^2) \sum_{\substack{l=0 \\ l \equiv m-1 \text{ (mod 2)}}}^{m-3} p^m_l(\lambda^2+k^2)(k\lambda)^l \\
&&- h_{m}(\lambda^2+k^2) \sum_{\substack{l=0 \\ l \equiv m-1 \text{ (mod 2)}}}^{m-3} p^m_l(\lambda^2+k^2)(k\lambda)^{l+1} \\
\end{eqnarray*}
in $\prescript{}{m-2}{\mathcal{A}}_{m-2}$.
By induction hypothesis, this implies that for $l\leq m-2$:
$$h_l(\mu)=
\begin{cases}
h_{m-1}(\mu)p^m_{l}(\mu), & l \equiv m-1 \text{ (mod 2)} \\
h_m(\mu) p^m_{l-1}(\mu),& l \equiv m \text{ (mod 2)}.
\end{cases}
$$
Then, for $k=m$, this implies
$$0=\sum_{l=0}^m h_l(\lambda^2+m^2)(k\lambda)^l=[h_{m-1}(\lambda^2+m^2)+h_m(\lambda^2+m^2)(m\lambda)]p_m(\lambda, m).$$
Since $p_m(\lambda,m)$ is not identical zero on $\lambda \in \mathfrak{a}^*_\mathbb{C}$, we
 obtain that $h_{m-1}$ and $h_m$ are zero. Hence $h_l=0$, for $l\leq m$.
This completes the proof.
\end{proof}

\subsection*{General case: The $K$-types $n$ and $m$ are distinct}
Consider now two distinct $K$-types $(n,E_n)$ and $(m,E_m)$.
Since $\text{Hom}_M(E_n,E_m)=0$, for $n \not\equiv m$ (mod 2), we assume throughout the section that $n \equiv m$ (mod 2).
We define 
$$\prescript{}{n}{\mathcal{A}}_{m} := \prescript{}{n}{PWS}_{m,H}(\mathfrak{a}^*_\mathbb{C})\subset \text{Hol}(\mathfrak{a}^*_\mathbb{C}, \text{Hom}_M(E_n,E_m))$$
and
$\prescript{}{n}{\text{Pol}}_{m} := \prescript{}{n}{PWS}_{m,0}(\mathfrak{a}^*_\mathbb{C}).$
Recall that $\prescript{}{n}{PWS}_{m,H}(\mathfrak{a}^*_\mathbb{C})$ denotes the pre-Paley-Wiener-Schwartz space and consists of all $\varphi \in \text{Hol}(\mathfrak{a}^*_\mathbb{C}, \text{Hom}_M(E_n,E_m))$ satisfying the intertwining condition $(D.3)$ of Def.~\ref{defn:intcondL2L3}.
Moreover, 
$$\prescript{}{n}{PWS}_{m,0}(\mathfrak{a}^*_\mathbb{C})=\prescript{}{n}{PWS}_{m,H}(\mathfrak{a}^*_\mathbb{C}) \cap
\text{Pol}(\mathfrak{a}^*_\mathbb{C}, \text{Hom}_M(E_n,E_m)).$$
Note that after a choice of bases in $E_n,E_m$ consisting of $M$-weight vectors, we can write elements of $\text{Hol}(\mathfrak{a}^*_\mathbb{C}, \text{Hom}_M(E_n,E_m))$ in the following way:
$$
\varphi:=
\begin{cases}
\begin{pmatrix}
0						& \cdots 	& 0 \\
\vdots 					& \vdots 	&\vdots\\
0						& \cdots 	& 0 \\
\varphi_n(\lambda) 			& \cdots 	& 0 \\
\vdots 					& \ddots 	&\vdots\\
0 				 		& \cdots 	& \varphi_{-n}(\lambda) \\
0						& \cdots 	& 0\\
\vdots 					& \vdots 	&\vdots\\
0						& \cdots 	& 0 
\end{pmatrix}_{(m+1) \times (n+1)}& \text{ if } n < m,\\
\\
\begin{pmatrix}
0 		& \cdots 	& 0 		&\varphi_m(\lambda) 	& \cdots 	& 0 				&0 			& \cdots   	& 0\\
\vdots 	& \vdots 	& \vdots  	& \vdots			& \ddots 	& \vdots  			&  \vdots 		& \vdots  	& \vdots\\
0		& \cdots 	& 0		& 0	  			& \cdots 		& \varphi_{-m}(\lambda)	& 0 			& \cdots 	& 0 \\
\end{pmatrix}_{(m+1) \times (n+1)}&  \text{ if } n > m.
\end{cases}
$$
\noindent
Note that $\prescript{}{n}{\text{Pol}}_m$ is isomorphic to the set of all $G$-invariant differential operators $\mathcal{D}_G(\mathbb{E}_n,\mathbb{E}_m)$  \cite[Sect.~7]{PalmirottaPWS}.
We start with the case $\vert n-m\vert =2$.

\begin{prop} \label{prop:q+m}
Let $m$ be a $K$-type.
There exists a unique operator of first order $q^+_m$  in $\prescript{}{m}{\emph \Pol}_{m+2}$ (resp. $q^-_m$ in $\prescript{}{m+2}{\emph \Pol}_{m}$), up to normalization. It corresponds to
\begin{eqnarray*}
&(\lambda+m+2) \cdot &
\begin{pmatrix}
0								& \cdots 	& 0 \\
1 					& \cdots 	& 0 \\
\vdots 			& \ddots 					&\vdots\\
0 				&  \hdots  						& 1\\
0							& \cdots 	& 0
\end{pmatrix}_{(m+2) \times m} \nonumber \\
&\Bigg(\text{resp. } (\lambda-(m+2)) \cdot &
\begin{pmatrix}
0 		&d(m,m) 				& \cdots 	&0 & 0\\
\vdots 	& \vdots 			& \ddots 	&\vdots & \vdots\\
0 		&  0 	 		& \hdots		&d(m,-m)& 0 \\
\end{pmatrix}_{m \times (m+2)}\Bigg)
 \end{eqnarray*}
under some appropriate basis choice.
Here $d(m,k)=(m+2)^2-k^2$.
\end{prop}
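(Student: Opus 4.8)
The plan is to follow the template of the proof of Thm.~\ref{thm:IsomAm}, reducing $(D.3)$ for the pair of $K$-types $(\delta_m,\delta_{m+2})$ to explicit scalar equations and then isolating the first-order solution. After fixing $M$-weight bases of $E_m$ and $E_{m+2}$, an element $\varphi\in\prescript{}{m}{\Pol}_{m+2}\subset\Pol(\aL^*_\C,\Hom_M(E_m,E_{m+2}))$ is recorded by scalar polynomials $\varphi_k(\lambda)$ indexed by the weights $k\in\{-m,-m+2,\dots,m\}$ common to $E_m$ and $E_{m+2}$; the extremal weights $\pm(m+2)$ of $E_{m+2}$ admit no entry, which is exactly the pair of vanishing rows of the matrix in the statement. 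Two families of constraints then appear. First, the Knapp--Stein symmetry of Prop.~\ref{prop:normKSLevel2}(b) with $w=-1$ relates $\varphi_k(\lambda)$ to $\varphi_{-k}(-\lambda)$ with coefficient given, up to sign, by the $\cc$-quotient (\ref{eq:cfctquotientSL2C}) for this pair, namely $\tfrac{\lambda-(m+2)}{\lambda+(m+2)}$ or its reciprocal according to the source--target orientation. Polynomiality of the $\varphi_k$ forces the pole of this coefficient to be cancelled, which pins down the linear factor carried by every $\varphi_k$: the orientation relevant to $\prescript{}{m}{\Pol}_{m+2}$ yields the factor $\lambda+m+2$ of the statement. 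Second, the Želobenko relations arising from (\ref{eq:Lintwcond3}) in Example~\ref{exmp:IntwOp}, applied to $L=L_{\sigma,\lambda}$ and to $\tilde L,L',\tilde L'$ of Thm.~\ref{thm:WallachSL2C}, become through the induced maps $l^{\delta_m}$ and $l^{\delta_{m+2}}$ evaluation-matching identities among the $\varphi_k$ at the reducibility points, generalizing the relation $\varphi_k(l)=\varphi_l(k)$ of the equal-weight case.

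For existence I would simply verify that the explicit candidate $\varphi_k(\lambda)=\lambda+m+2$ for every common weight $k$ (i.e. the shift matrix of the statement scaled by $\lambda+m+2$) satisfies both families. The Knapp--Stein symmetry reduces to an identity obtained by substituting $\varphi_k(\lambda)=\lambda+m+2$ into the relation of the previous paragraph, and holds with the appropriate sign because of the recorded $\cc$-quotient value. The Želobenko identities then reduce to relations among the constant coefficients, and a direct check using the normalizations of $l^{\delta_m}$ and $l^{\delta_{m+2}}$ confirms that the constant choice satisfies them. (Existence is in any case guaranteed a priori, since a first-order invariant differential operator $\E_m\to\E_{m+2}$ exists and lies in $\prescript{}{m}{\Pol}_{m+2}$ under the identification of $\prescript{}{m}{\Pol}_{m+2}$ with $\DD_G(\E_m,\E_{m+2})$.)

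For uniqueness, suppose $\varphi$ is first order. The symmetry forces $\varphi_k(\lambda)=a_k(\lambda+m+2)$ with $a_k\in\C$ and $a_{-k}=a_k$, so the first-order part of $\prescript{}{m}{\Pol}_{m+2}$ is parametrized by finitely many constants $a_k$. The Želobenko identities then impose a linear system on the $a_k$; solving it shows that the solution space is one-dimensional, spanned by the constant choice $a_k\equiv 1$. This proves that $q^+_m$ is unique up to normalization.

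Finally, $q^-_m\in\prescript{}{m+2}{\Pol}_m$ is produced either by running the same argument for the reversed pair $(\delta_{m+2},\delta_m)$ or, as indicated in the statement, by passing to the adjoint. Reversing source and target selects the opposite orientation in the Knapp--Stein symmetry, which now pins down the linear factor $\lambda-(m+2)$; the Želobenko system for this orientation no longer admits a constant solution but forces the weight-dependent normalization $d(m,k)=(m+2)^2-k^2$. The cleanest confirmation is the composition: $q^-_m\circ q^+_m$ lies in $\prescript{}{m}{\Pol}_m$ and has $k$-entry $(\lambda^2-(m+2)^2)\,((m+2)^2-k^2)$, which satisfies the defining relations (\ref{eq:propSL2C}) of $\prescript{}{m}{\mathcal{A}}_m$ precisely because of the factor $d(m,k)$ — a constant normalization would violate $\varphi_k(l)=\varphi_l(k)$. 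I expect the main obstacle to be the bookkeeping of these Želobenko normalizations: extracting the exact coefficients of $l^{\delta_m}$ and $l^{\delta_{m+2}}$ from Thm.~\ref{thm:WallachSL2C} together with the Clebsch--Gordan rule (\ref{eq:CGrule}), and thereby confirming that the system forces $a_k\equiv 1$ for $q^+_m$ and $a_k=d(m,k)$ for $q^-_m$; the Knapp--Stein part, by contrast, is a routine $\cc$-quotient computation.
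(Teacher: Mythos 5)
Your Knapp--Stein step and your composite check reproduce two of the paper's three ingredients: the $\cc$-quotient $\tfrac{-\lambda+(m+2)}{\lambda+(m+2)}$ from (\ref{eq:cfctquotientSL2C}) together with polynomiality does force every component of a first-order element of $\prescript{}{m}{\Pol}_{m+2}$ to be a scalar multiple of $\lambda+m+2$, and applying $\varphi_k(l)=\varphi_l(k)$ from (\ref{eq:propSL2C}) to the composite of $q^+_m$ with $q^-_m=c\,(q^+_m)^*$ is essentially how the paper derives $d(m,k)\propto(m+2)^2-k^2$. The gap is in the remaining, decisive step: showing that the space of first-order elements of $\prescript{}{m}{\Pol}_{m+2}$ is exactly one-dimensional (existence \emph{and} uniqueness up to scale). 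You propose to get this by solving the linear system that the \v{Z}elobenko conditions (\ref{eq:Lintwcond3}) impose on the constants $a_k$, but you never solve it, and you cannot with the information at hand: the coefficients of that system are the individual matrix entries of $l^{\delta_m}$ and $l^{\delta_{m+2}}$ on each $M$-weight, and Thm.~\ref{thm:WallachSL2C} only asserts the existence of $L_{\sigma,\lambda}$ and identifies its kernel and image --- it does not supply these normalizations, whose computation is comparable in difficulty to the proposition itself. You flag exactly this as ``the main obstacle'', which is an accurate self-diagnosis that the proof is not complete. The parenthetical fallback --- that a first-order invariant differential operator $\E_m\to\E_{m+2}$ exists a priori --- is likewise unproved and is essentially the statement to be established.

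The paper closes this gap with a single invariant-theoretic dimension count that your proposal does not contain and that uses no \v{Z}elobenko input at all: by (\cite{Martin}, Folgerung 2.5) the associated graded module of $\prescript{}{m}{\Pol}_{m+2}$ is $[S(\mathfrak{p})\otimes\Hom(E_m,E_{m+2})]^K$, and the Clebsch--Gordan rule (\ref{eq:CGrule}) gives
$[S^{\leq 1}(\mathfrak{p})\otimes\Hom(E_m,E_{m+2})]^K\cong[S^1(\mathfrak{p})\otimes E_2]^K$, which is one-dimensional since $\Hom(E_m,E_{m+2})\cong E_2\oplus E_4\oplus\cdots\oplus E_{2m+2}$ and $S^1(\mathfrak{p})\cong E_2$. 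This yields existence and uniqueness simultaneously; the Knapp--Stein symmetry and the composite argument are then only used to identify the unique element explicitly (including the nonvanishing of every component, which the paper extracts from $d(m,k)\neq 0$). To salvage your route you would have to either prove this dimension count yourself or actually compute the \v{Z}elobenko matrix coefficients on each $K$-type and verify that the resulting system has a one-dimensional solution space; as written, the key step is asserted rather than proved.
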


\begin{proof}
Consider the symmetric algebra $S(\mathfrak{p})$ of $\mathfrak{p}\cong \mathbb{R}^3$ over $\mathbb{C}$.
It follows from the Poincar\'e-Birkhoff-Witt theorem and the description of invariant differential opeartors in terms of $\mathcal{U}(\mathfrak{g})$ that the associated graded module of $\prescript{}{m}{\text{Pol}}_{m+2}$ is isomorphic to $[S(\mathfrak{p}) \otimes \text{Hom}_M(E_m,E_{m+2})]^K$ (e.g. \cite[Folgerung 2.5]{Martin} or \cite{PalmirottaDiss}).
Using the Clebsch-Gordan rule (\ref{eq:CGrule}) we obtain
\begin{eqnarray*}
[S^{\leq 1}(\mathfrak{p}) \otimes \text{Hom}(E_m,E_{m+2})]^K 
&\cong& [(S^0(\mathfrak{p}) \oplus S^1(\mathfrak{p})) \otimes (E_2 \oplus E_4 \oplus \dots \oplus E_{2m+2})]^K \\
&\cong& [S^1(\mathfrak{p}) \otimes E_2]^K,
\end{eqnarray*}
where $[S^1(\mathfrak{p}) \otimes E_2]^K$ is one-dimensional.
Note that $S^1(\mathfrak{p}) \cong \mathfrak{p}_\mathbb{C} \cong E_2$.
This means that $\prescript{}{m}{\text{Pol}}_{m+2}$ contains exactly one element $q^+_m$ of filter degree one (up to normalization).
After an appropiate choice of basis, we can write $q^+_{m}$ in matrix form as above.
Since $q^+_{m}$
is of first order (but note that the individual components could be constants), we can find basis element such that
$$q^+_{m}(\lambda)=
\begin{cases}
\tilde{c}(m,k),&   \text{ or } \\
\lambda + c(m,k),&  \text{ (at least for one $k$)}.
\end{cases}$$
Here ${c}(m,k), \tilde{c}(m,k)$ are constants depending on $k$ and $m$. 
Consider $\varphi_k \in \prescript{}{m}{\mathcal{A}}_{m+2}$.
By Prop.~\ref{prop:normKSLevel2}~(b) and formula (\ref{eq:cfctquotientSL2C}), we have that
\begin{eqnarray*}
\varphi_{-k}(-\lambda)
= (-1)^{(m+2-m)/2} \frac{\mathbf{c}_{m+2}(k,\lambda)}{\mathbf{c}_{m}(k,\lambda)} \varphi_k(\lambda) 
&=&(-1) \frac{\Gamma(\frac{\lambda}{2}+\frac{m}{2}+1)\Gamma(\frac{\lambda}{2}-\frac{m}{2})}{\Gamma(\frac{\lambda}{2}+\frac{m}{2}+2)\Gamma(\frac{\lambda}{2}-\frac{m}{2}-1)} \varphi_k(\lambda) \\
&=&(-1)\frac{(\lambda-(m+2))}{(\lambda+(m+2))}\varphi_k(\lambda) \\
&=& \frac{(-\lambda+(m+2))}{(\lambda+(m+2))} \varphi_k(\lambda).
\end{eqnarray*}
Since $\varphi_{-k}$ has no singularities, we conclude that $\varphi_k$ has a zero at $-(m+2)$.
Applying this to $\varphi=q^+_m$, we see that
$$q^+_{m,k}(\lambda)= \begin{cases} 0, \\ \lambda +(m+2). \end{cases}$$
Analogously, there exists exactly one (up to normalization) $q^-_m \in \prescript{}{m+2}{\mathcal{A}}_m$ of degree 1.
We consider $q^+_mq^-_m \in \prescript{}{m+2}{\mathcal{A}}_{m+2}$. Then $q^+_mq^-_m(\lambda, \pm (m+2))=0$.
This implies that $q^+_mq^-_m(\pm(m+2),k)=0$ for $\vert k\vert \leq m, k \equiv m$ (mod 2) of degree 2.
Hence, $q^+_mq^-_m(\lambda,k)=d(m,k)(\lambda^2-(m+2)^2)$ for some constants $d(m,k)$.
Since $q^-_m=c \cdot (q^+_m)^*$, where $*$ stands for the adjoint $p^*(\lambda):=p(-\overline{\lambda})^*$, we have $d(m,k) \neq 0$ for at least one $k$.
In addition, by (\ref{eq:propSL2C})
$$
d(m,k)= \begin{cases}
d(m,0) \cdot \frac{(m+2)^2-k^2}{(m+2)^2},& m \text{ even}\\
d(m,1) \cdot \frac{(m+2)^2-k^2}{(m+2)^2-1},& m \text{ odd}.
\end{cases}
$$
We conclude (up to normalization) that $q^+_m(\lambda,k)= \lambda+c(m,k)=\lambda+(m+2)^2$
and $$q^-_m(\lambda,k)=((m+2)^2-k^2)(\lambda-(m+2)).$$
\end{proof}

Now we consider general $n,m \in \mathbb{N}_0, n \equiv m$ (mod 2).
\begin{defn} \label{defn:qnmSL2C}
Consider $q^\pm_m$ and $q^\pm_n$ as in Prop.~\ref{prop:q+m}.
We define the polynomial $q_{n,m} \in \prescript{}{n}{\text{Pol}}_m$ by
\begin{equation} \label{eq:qnmSL2C}
q_{n,m}=
\begin{cases}
q^+_{m-2} \cdot q^+_{m-4} \cdots q^+_{n+2} \cdot q^+_n, & \text{if } n<m\\
q^-_{m} \cdot q^-_{m+2} \cdots q^-_{n-4} \cdot q^-_{n-2}, & \text{if } n>m\\
\text{Id},& \text{if } n=m.
\end{cases}
\end{equation}
\end{defn}

Finally, the following theorem gives explicitly the Paley-Wiener(-Schwartz) spaces in (\textcolor{blue}{Level 3}) for $G=\mathbf{SL}(2,\mathbb{C})$.

\begin{thm}[Intertwining condition in (\textcolor{blue}{Level 3})]\label{thm:Meta3SL2C}
Let $n,m \in \mathbb{N}_0$ be two $K$-types, which are not necessarily distinct, and let $l:=\min(n,m)$.
Then $\prescript{}{n}{\emph \Pol}_{m}$ (resp. $\prescript{}{n}{\mathcal{A}}_{m}$) is a free $\prescript{}{l}{\emph \Pol}_{l}$ (resp. $\prescript{}{l}{\mathcal{A}}_{l}$)-module with generator $q_{n,m}$.
This means that there exists a unique function $h \in \prescript{}{l}{\mathcal{A}}_l$ such that
\begin{equation} \label{eq:intwcondSL2C}
\prescript{}{n}{\mathcal{A}}_{m} \ni \varphi(\lambda)=
\begin{cases}
h(\lambda) q_{n,m}(\lambda), & \text{ if } m < n,\\
q_{n,m}(\lambda) h(\lambda), & \text{ if } m > n,\\
\end{cases}
\;\;\;\; \text{for } \lambda \in \mathfrak{a}^*_\mathbb{C}.
\end{equation}
Moreover, if $L=\max(n,m)$, then $\prescript{}{n}{\emph \Pol}_{m}$ (resp. $\prescript{}{n}{\mathcal{A}}_{m}$) is a $\prescript{}{L}{\emph \Pol}_{L}$ (resp. $\prescript{}{L}{\mathcal{A}}_{L}$)-module generated by $q_{n,m}$.
\end{thm}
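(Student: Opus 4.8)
The plan is to show that multiplication by $q_{n,m}$ realises an isomorphism of $\prescript{}{l}{\mathcal{A}}_l$-modules (resp. $\prescript{}{l}{\Pol}_l$-modules) from $\prescript{}{l}{\mathcal{A}}_l$ onto $\prescript{}{n}{\mathcal{A}}_m$. Applying the adjoint $p \mapsto p^*$, $p^*(\lambda) := p(-\overline{\lambda})^*$, which exchanges $\prescript{}{n}{\Pol}_m$ with $\prescript{}{m}{\Pol}_n$, carries $q_{n,m}$ to a nonzero scalar multiple of $q_{m,n}$ and preserves $\prescript{}{l}{\mathcal{A}}_l$, it suffices to treat the case $n \le m$; then $l = n$, $L = m$, and the claimed factorisation reads $\varphi = q_{n,m}\,h$ with $h \in \prescript{}{n}{\mathcal{A}}_n$. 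I would proceed by induction on $(m-n)/2$, the base case $m=n$ being Thm.~\ref{prop:Amm} (with $q_{n,n}=\Id$).

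For the inductive step I would prove the single-step lemma: for $n \le m-2$, left multiplication by $q^+_{m-2} \in \prescript{}{m-2}{\Pol}_m$ is a bijection $\prescript{}{n}{\mathcal{A}}_{m-2} \to \prescript{}{n}{\mathcal{A}}_m$ (and likewise for the polynomial spaces), which is right $\prescript{}{n}{\mathcal{A}}_n$-linear since left and right matrix multiplication commute. Well-definedness is immediate because $q^+_{m-2}$ satisfies $(D.3)$ as an element of $\prescript{}{m-2}{\Pol}_m$ and composition preserves the intertwining condition; injectivity is immediate because, in the weight basis, $(q^+_{m-2}\psi)_k = (\lambda+m)\psi_k$ and $\lambda+m \neq 0$ as a polynomial. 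Granting the lemma, $q_{n,m} = q^+_{m-2}\,q_{n,m-2}$ (Def.~\ref{defn:qnmSL2C}) together with the induction hypothesis $\prescript{}{n}{\mathcal{A}}_{m-2} = q_{n,m-2}\,\prescript{}{n}{\mathcal{A}}_n$ yields $\prescript{}{n}{\mathcal{A}}_m = q^+_{m-2}\,\prescript{}{n}{\mathcal{A}}_{m-2} = q_{n,m}\,\prescript{}{n}{\mathcal{A}}_n$, and freeness of rank one (uniqueness of $h$) propagates from the base case. The final assertion, that $\prescript{}{n}{\mathcal{A}}_m$ is generated (no longer freely) by $q_{n,m}$ over $\prescript{}{L}{\mathcal{A}}_L = \prescript{}{m}{\mathcal{A}}_m$, follows by restricting the left $\prescript{}{m}{\mathcal{A}}_m$-action to the weights $|k|\le n$, the restriction $\prescript{}{m}{\mathcal{A}}_m \to \prescript{}{n}{\mathcal{A}}_n$ being surjective by the generators of Thm.~\ref{prop:Amm}.

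The substance of the single-step lemma is surjectivity, which I would establish component-wise. As in the proofs of Thm.~\ref{thm:IsomAm} and Prop.~\ref{prop:q+m}, the Knapp-Stein part of $(D.3)$ (Prop.~\ref{prop:normKSLevel2}(b)) reads $\varphi_{-k}(-\lambda) = (-1)^{(m-n)/2}\,\frac{\cc_{m,k}(\lambda)}{\cc_{n,k}(\lambda)}\,\varphi_k(\lambda)$ for $\varphi \in \prescript{}{n}{\mathcal{A}}_m$; since $\varphi_{-k}$ is holomorphic and $\frac{\cc_{m,k}}{\cc_{n,k}}$ has a pole at $\lambda = -m$ by (\ref{eq:cfctquotientSL2C}), each $\varphi_k$ vanishes at $-m$, so $\psi_k := \varphi_k/(\lambda+m)$ is holomorphic (and polynomial when $\varphi$ is). Using $\frac{\cc_{m,k}}{\cc_{m-2,k}} = \frac{\lambda-m}{\lambda+m}$ one checks directly that $\psi=(\psi_k)$ again satisfies the Knapp-Stein relation with indices $(n,m-2)$. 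It remains to verify that $\psi$ satisfies the Želobenko/BGG part of $(D.3)$ for $\prescript{}{n}{\mathcal{A}}_{m-2}$, i.e. the compatibility relations between distinct spectral points (the analogues of $\varphi_k(l)=\varphi_l(k)$ in (\ref{eq:propSL2C})) coming from the operators $L_{\sigma,\lambda}$.

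I expect this last point — making the off-diagonal $(D.3)$ conditions fully explicit in the weight components and checking that they survive division by $(\lambda+m)$ — to be the main obstacle. Unlike the Knapp-Stein relation, which is diagonal in the weight $k$ and transforms transparently, the BGG conditions couple different values of $\lambda$ and are the hardest to package. I would therefore first record, by the Frobenius/$D^\tau_W$ analysis of Def.~\ref{defn:intcondL2L3} exactly as in Thm.~\ref{thm:IsomAm}, the explicit list of compatibility equations defining $\prescript{}{n}{\mathcal{A}}_m$, and then verify them one by one for $\psi$, using that $q^+_{m-2}$ itself satisfies them and that the factor $(\lambda+m)$ is common to all surviving components.
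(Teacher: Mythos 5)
Your overall strategy (induction on $(m-n)/2$, peeling off one Želobenko factor $q^{+}$ at a time) is a legitimate alternative to the paper's argument, and the reduction to $n\le m$ by adjoints, the injectivity of multiplication by $q^{+}_{m-2}$, and the derivation of the zero of each $\varphi_k$ at $\lambda=-m$ from the Knapp--Stein relation and the pole of $\cc_{m,k}/\cc_{n,k}$ are all sound. But the proposal has a genuine gap, and you have located it yourself: you never verify that the quotient $\psi_k=\varphi_k/(\lambda+m)$ satisfies the off-diagonal (Želobenko/BGG) part of $(D.3)$ for the pair $(n,m-2)$; you only say you ``expect'' this to be the main obstacle and ``would'' check it. For $n\neq m$ these compatibility relations between distinct spectral points are the entire content of $(D.3)$ beyond the Knapp--Stein symmetry --- they are what force the evaluation identities analogous to $\varphi_k(l)=\varphi_l(k)$ in (\ref{eq:propSL2C}) --- so without them the surjectivity of the single-step map, and hence the whole induction, is not established. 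Note also that the paper never writes these conditions down explicitly for $n\neq m$, so ``recording the explicit list'' is itself a nontrivial task that your outline leaves entirely open.

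The paper's proof sidesteps exactly this difficulty by a different device: instead of dividing $\varphi$ by linear factors one at a time, it multiplies $\varphi\in\prescript{}{n}{\mathcal{A}}_m$ by the opposite polynomial $q_{m,n}$ so as to land in the diagonal space where the defining relations $g_k(l)=g_l(k)$ and $g_k(\lambda)=g_{-k}(-\lambda)$ are explicitly known (Def.~\ref{def:Amm}, Thm.~\ref{thm:IsomAm}); it then reads off the required zeros of $\varphi$, divides, and transfers the relations from $g=q_{m,n}\varphi$ to $h$ using the invariance of the product $q_{m,n}q_{n,m}$ under $(\lambda,k)\mapsto(-\lambda,-k)$ and $(\lambda,k)\mapsto(k,\lambda)$. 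The final $\prescript{}{L}{\mathcal{A}}_{L}$-module statement is obtained in the paper by Lagrange interpolation, whereas your argument via surjectivity of the restriction $\prescript{}{m}{\mathcal{A}}_m\to\prescript{}{n}{\mathcal{A}}_n$ using the generators $(k\lambda)^l$ of Thm.~\ref{prop:Amm} is a clean substitute for that particular step. To complete your proof you would either have to make the $(D.3)$ relations on $\prescript{}{n}{\mathcal{A}}_m$ explicit in weight components and check their stability under division by $(\lambda+m)$, or import the paper's multiplication-by-$q_{m,n}$ trick.
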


\begin{proof}
Consider the case $m<n$, it suffices to prove that
\begin{itemize}
\item[(a)] there exists a unique $h\in \prescript{}{m}{\mathcal{A}}_{m}$ such that $\varphi=h\cdot q_{n,m} \in \prescript{}{n}{\mathcal{A}}_{m}$,.
\item[(b)]  there exists a $\tilde{h} \in \prescript{}{n}{\mathcal{A}}_{n}$ such that $\varphi=q_{n,m}\cdot \tilde{h} \in \prescript{}{n}{\mathcal{A}}_{m}$.
\end{itemize}
The polynomial case as well as $m>n$ can be proved in a similar way.
Consider
$$q_{m,n} \prescript{}{n}{\mathcal{A}}_{m} \subset \{g \in \prescript{}{n}{\mathcal{A}}_{n} \;\vert\; g_k=0,\; \forall \vert k\vert > m\} \subset \prescript{}{n}{\mathcal{A}}_{n}.$$
We also have zeros between the lines $-m$ and $m$, this means $g_l(k)=0, k = \pm (m+2), \pm (m+4), \dots, \pm n$.
Let $\varphi \in \prescript{}{n}{\mathcal{A}}_{m}$.
Every component of $g=q_{m,n} \cdot \varphi$ has zeros as described above and $q_{m,n}$ has zeros only at negative $\lambda$. It follows that every component $\varphi$ has zeros at $m+2, \dots, n$.
Hence, there exists a unique $h \in \text{Hol}(\mathfrak{a}^*_\mathbb{C},\text{End}(E_m))$ so that 
$$\varphi=h \cdot q_{n,m}.$$
Since $g$ satisfies the intertwining conditions $g_k(l)=g_l(k), g_k(\lambda)=g_{-k}(-\lambda)$, and 
\begin{eqnarray*}
q_{m,n}(k,\lambda)q_{n,m}(k,\lambda)&=&q_{m,n}(-k,-\lambda)q_{n,m}(-k,-\lambda) \\
q_{m,n}(k,l)q_{n,m}(k,l)&=&q_{m,n}(l,k)q_{n,m}(l,k)
\end{eqnarray*}
we see that $h_l(k)=h_k(l)$ and $h_k(\lambda)=h_{-k}(-\lambda)$
for $k\equiv l \equiv m$ (mod 2), $\vert k \vert \leq m$ and $\lambda \in \mathfrak{a}^*_\mathbb{C}$.
This proves (a).

Concerning (b), we know from (a) that $\varphi =h\cdot q_{n,m}$.
Thus we need to find 
$$\tilde{h}=\text{diag}(\tilde{h}_n,\dots, \tilde{h}_{m+2}, \tilde{h}_m, \dots, \tilde{h}_{-m}, \tilde{h}_{-(m-2)}, \dots, \tilde{h}_{-n})
\in \prescript{}{n}{\mathcal{A}}_{n}
$$ 
with $\tilde{h}_k=h_k$ for $\vert k \vert \leq m$.
Note that the crucial condition to be satisfied is $\tilde{h}_k(l)=\tilde{h}_l(k).$
Then $\varphi= \prescript{}{n}{q}_m \cdot \tilde{h}$.
By using interpolation polynomials, we define recursively
\begin{eqnarray*}
\tilde{h}_{m+2}(\lambda)&:=&\sum_{\substack{i=-m \\ i \equiv m \text{ (mod 2)}}}^m h_i(m+2)\prod^m_{\substack{ l =-m \\ l \neq i}} \Big( \frac{\lambda-l}{i-l}\Big) \\
\tilde{h}_{m+4}(\lambda)&:=&\sum_{\substack{i=-(m+2) \\ i \equiv m+2 \text{ (mod 2)}}}^{m+2} \tilde{h}_i(m+4)\prod^{m+2}_{\substack{ l =-(m+2) \\ l \neq i}} \Big( \frac{\lambda-l}{i-l}\Big)
\end{eqnarray*}
\begin{eqnarray*}
&\vdots& \\
\tilde{h}_{n-2}(\lambda)&:=&\sum_{\substack{i=-(n-4) \\ i \equiv n-4 \text{ (mod 2)}}}^{n-4} \tilde{h}_i(n-2)\prod^{n-4}_{\substack{ l =-(n-4) \\ l \neq i}} \Big( \frac{\lambda-l}{i-l}\Big) \\
\tilde{h}_{n}(\lambda)&:=&\sum_{\substack{i=-(n-2) \\ i \equiv n-2 \text{ (mod 2)}}}^{n-2} \tilde{h}_i(n)\prod^{n-2}_{\substack{ l =-(n-2) \\ l \neq i}} \Big( \frac{\lambda-l}{i-l}\Big),
\end{eqnarray*}
and $\tilde{h}_{-k}(\lambda):=\tilde{h}_k(-\lambda)$.
Then $h \in \prescript{}{n}{\mathcal{A}}_n$.
\end{proof}

We observe that an anologue of
Thm.~\ref{prop:Amm}, in particular relation (\ref{eq:Ammflat}), is also true in general for distinct $K$-type $n$ and $m$.
\begin{cor} \label{cor:Anmflat}
With the notations above, $\prescript{}{n}{\mathcal{A}}_m \cong \emph \Hol(\mathbb{C}) \otimes_{\emph \Pol(\mathbb{C})} \prescript{}{n}{\emph \Pol}_m.$
\end{cor}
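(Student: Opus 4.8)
The plan is to deduce the statement by transport of structure from the equal-$K$-type case already settled in Theorem~\ref{prop:Amm}, using the rank-one freeness established in Theorem~\ref{thm:Meta3SL2C}. Set $l := \min(n,m)$. By Theorem~\ref{thm:Meta3SL2C}, the module $\prescript{}{n}{\mathcal{A}}_m$ is free of rank one over $\prescript{}{l}{\mathcal{A}}_l$ with generator $q_{n,m}$, and likewise $\prescript{}{n}{\Pol}_m$ is free of rank one over $\prescript{}{l}{\Pol}_l$ with the same generator. Hence multiplication by $q_{n,m}$ (on the appropriate side, according to whether $m<n$ or $m>n$) furnishes module isomorphisms $\prescript{}{l}{\mathcal{A}}_l \xrightarrow{\sim} \prescript{}{n}{\mathcal{A}}_m$ and $\prescript{}{l}{\Pol}_l \xrightarrow{\sim} \prescript{}{n}{\Pol}_m$.

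First I would verify that these two isomorphisms are compatible with the $\Hol(\C)$- and $\Pol(\C)$-module structures defined through multiplication by $\lambda^2+k^2$. Since $q_{n,m}$ is diagonal in the $M$-weight $k$ — by Prop.~\ref{prop:q+m} each factor $q^\pm$ sends a weight-$k$ vector to the weight-$k$ vector, so the $k$-th entry of $q_{n,m}$ is a scalar polynomial $q_{n,m,k}(\lambda)$ — the generator map sends $h=(h_k)$ to $(h_k\,q_{n,m,k})$, and the scalar $\lambda^2+k^2$ acts on the fixed weight $k$ on both sides. Concretely, for $p\in\Pol(\C)$ one has $(p\cdot(h\,q_{n,m}))_k = p(\lambda^2+k^2)\,h_k\,q_{n,m,k} = ((p\cdot h)\,q_{n,m})_k$, so the map is $\Pol(\C)$-linear; the same computation gives $\Hol(\C)$-linearity for $\mathcal{A}$. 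Consequently $\prescript{}{n}{\Pol}_m \cong \prescript{}{l}{\Pol}_l$ as $\Pol(\C)$-modules and $\prescript{}{n}{\mathcal{A}}_m \cong \prescript{}{l}{\mathcal{A}}_l$ as $\Hol(\C)$-modules.

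Then I would chain these with the equal-$K$-type isomorphism. Applying Theorem~\ref{prop:Amm}, in particular \eqref{eq:Ammflat}, to the $K$-type $l$ gives $\prescript{}{l}{\mathcal{A}}_l \cong \Hol(\C) \otimes_{\Pol(\C)} \prescript{}{l}{\Pol}_l$. Combining this with the isomorphisms obtained above and using that $\Hol(\C) \otimes_{\Pol(\C)} -$ is functorial, I obtain
$$\prescript{}{n}{\mathcal{A}}_m \cong \prescript{}{l}{\mathcal{A}}_l \cong \Hol(\C) \otimes_{\Pol(\C)} \prescript{}{l}{\Pol}_l \cong \Hol(\C) \otimes_{\Pol(\C)} \prescript{}{n}{\Pol}_m,$$
which is the claim. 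When $n=m$ one has $l=n=m$ and $q_{n,m}=\Id$, so the statement reduces tautologically to Theorem~\ref{prop:Amm}.

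The only genuine point of care is the module-compatibility check in the second paragraph: one must ensure that the generator $q_{n,m}$ produced in Theorem~\ref{thm:Meta3SL2C} is diagonal in the weight and that the $\Pol(\C)$-action through $\lambda^2+k^2$ is preserved, so that the rank-one freeness upgrades to an isomorphism of $\Pol(\C)$- (resp. $\Hol(\C)$-) modules rather than merely of $\prescript{}{l}{\Pol}_l$- (resp. $\prescript{}{l}{\mathcal{A}}_l$-) modules. Everything else is a formal concatenation of isomorphisms.
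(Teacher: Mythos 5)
Your proposal is correct and follows essentially the same route as the paper: both reduce to the equal-$K$-type case via the rank-one freeness of Theorem~\ref{thm:Meta3SL2C} (multiplication by $q_{n,m}$ identifying $\prescript{}{n}{\mathcal{A}}_m$ with $\prescript{}{l}{\mathcal{A}}_l$ and $\prescript{}{n}{\Pol}_m$ with $\prescript{}{l}{\Pol}_l$) and then apply relation \eqref{eq:Ammflat} of Theorem~\ref{prop:Amm}. Your explicit verification that these identifications respect the $\Pol(\C)$- and $\Hol(\C)$-module structures given by $\lambda^2+k^2$ is a point the paper leaves implicit, and it is a worthwhile addition.
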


\begin{proof}
As a consequence of Thm.~\ref{thm:Meta3SL2C}, the 
$\text{Hol}(\mathbb{C})$-module $\prescript{}{n}{\mathcal{A}}_m$ is isomorphic to $\prescript{}{l}{\mathcal{A}}_l$, and $\prescript{}{n}{\text{Pol}}_m$ is isomorphic to $\prescript{}{l}{\text{Pol}}_l$ as $\text{Pol}(\mathbb{C})$-module. Then, we can apply Thm.~\ref{prop:Amm} and obtain the desired result.
\end{proof}

\subsection*{Acknowledgement}
This research is supported by the research project PRIDE15/10949314/GSM
of the Luxembourg National Research Fund and by the University of Luxembourg.

\medskip

\begin{minipage}[t][2.5cm][b]{0.7\textwidth}
Université du Luxembourg,\\
Faculty of Science, Technology and Medicine,\\
Department of Mathematics \\
Email addresses: \href{mailto:guendalina.palmirotta_@_uni.lu}{guendalina.palmirotta@uni.lu}
\end{minipage}

\begin{minipage}[t][2.5cm][b]{0.7\textwidth}
Université du Luxembourg,\\
Faculty of Science, Technology and Medicine,\\
Department of Mathematics \\
Email address: \href{mailto:martin.olbrich_@_uni.lu}{martin.olbrich@uni.lu}
\end{minipage}

\end{document}